\theoremstyle{plain}
\newtheorem{theorem}{Theorem}
\newtheorem{proposition}[theorem]{Proposition}
\newtheorem{corollary}[theorem]{Corollary}
\newtheorem{lemma}[theorem]{Lemma}
\newtheorem{theoremmain}{Theorem}
\newcommand{\refThmA}{\ref{T:intro-entropy-fk}}
\newcommand{\refThmB}{\ref{T:intro-entropy-graphs}}
\newcommand{\refThmC}{\ref{T:intro-strictlyErg}}
\theoremstyle{definition}
\newtheorem{example}[theorem]{Example}
\newtheorem{remark}[theorem]{Remark}
\numberwithin{equation}{section}
\newcommand{\NNN}{\mathbb N}
\newcommand{\RRR}{\mathbb R}
\newcommand{\AAa}{\mathcal{A}}
\newcommand{\BBb}{\mathcal{B}}
\newcommand{\FFf}{\mathcal{F}}
\newcommand{\VVv}{\mathcal{V}}
\newcommand{\PPp}{\mathcal{P}}
\newcommand{\ubar}[1]{\underaccent{\bar}{#1}}
\newcommand{\ccc}{c}
\newcommand{\cccu}{\bar{\ccc}}
\newcommand{\cccl}{\ubar{\ccc}}
\newcommand{\entru}{{\bar{h}_{\operatorname{cor}}}}
\newcommand{\entrl}{{\ubar{h}_{\operatorname{cor}}}}
\newcommand{\entr}{{{h}_{\operatorname{cor}}}}
\newcommand{\cdimu}{\bar{d}_{\operatorname{cor}}}
\newcommand{\cdiml}{\ubar{d}_{\operatorname{cor}}}
\newcommand{\boxdimu}{\bar{d}_{\operatorname{box}}}
\newcommand{\boxdiml}{\ubar{d}_{\operatorname{box}}}
\newcommand{\mmod}{\operatorname{mod}}
\newcommand{\diam}{\operatorname{diam}}
\newcommand{\dist}{\operatorname{dist}}
\newcommand{\orbit}{\operatorname{Orb}}
\newcommand{\topol}{{\operatorname{top}}}
\newcommand{\closure}[1]{\overline{#1}}
\newcommand{\eps}{\varepsilon}
\newcommand{\abs}[1]{\lvert#1\rvert}
\newcommand{\absbig}[1]{{\left|#1\right|}}
\newcommand{\len}[1]{\abs{#1}}
\newcommand{\card}[1]{{\abs{#1}}}
\newcommand{\cardbig}[1]{{\big|{#1}\big|}}
\newcommand{\cardw}[1]{\operatorname{card}#1}
\title[Local correlation entropy]{Local correlation entropy}
\author[Vladim\'ir \v Spitalsk\'y]{}
\subjclass[2010]{Primary: 37B40, 28D20; Secondary: 54H20.}
\keywords{Local correlation entropy, topological entropy, strictly ergodic, correlation sum, recurrence plot.}
\email{vladimir.spitalsky@umb.sk}
\begin{document}
\maketitle

% Enter the first author's name and address:
\centerline{\scshape Vladim\'ir \v Spitalsk\'y}
\medskip
{\footnotesize
% please put the address of the first author
   \centerline{Department of~Mathematics, Faculty of~Natural Sciences}
   \centerline{Matej Bel University}
   \centerline{Tajovsk\'eho~40, Bansk\'a Bystrica, Slovakia}
} % Do not forget to end the {\footnotesize by the sign }

\bigskip

\begin{abstract}
Local correlation entropy, introduced by Takens in 1983, represents the
exponential decay rate of the relative frequency of
recurrences in the trajectory of a point, as the embedding dimension grows to infinity.
In this paper we study relationship between the supremum of local correlation entropies and
the topological entropy.
For dynamical systems on topological graphs we prove that the two quantities coincide.
Moreover, there is an uncountable set of points with local correlation entropy arbitrarily close
to the topological entropy. On the other hand, we construct a strictly ergodic subshift
with positive topological entropy having all local correlation entropies equal to zero.
As a necessary tool, we derive
an expected relationship between the local correlation entropies
of a system and those of its iterates.
\end{abstract}

%\thispagestyle{empty}

%%%%%%%%%%%%%%%%%%%%%%%%%%%%%%%%%%%%%%%%%%%%%%%%%%%%%%%%%%%%%%%%%%%%%%%%%%%%%%%
%%%%%%%%%%%%%%%%%%%%%%%%%%%%%%%%%%%%%%%%%%%%%%%%%%%%%%%%%%%%%%%%%%%%%%%%%%%%%%%
%%%%%%%%%%%%%%%%%%%%%%%%%%%%%%%%%%%%%%%%%%%%%%%%%%%%%%%%%%%%%%%%%%%%%%%%%%%%%%%
%%%%%%%%%%%%%%%%%%%%%%%%%%%%%%%%%%%%%%%%%%%%%%%%%%%%%%%%%%%%%%%%%%%%%%%%%%%%%%%
%%%%%%%%%%%%%%%%%%%%%%%%%%%%%%%%%%%%%%%%%%%%%%%%%%%%%%%%%%%%%%%%%%%%%%%%%%%%%%%
\section{Introduction}\label{S:intro}
A (topological) dynamical system is a pair $(X,f)$ where $X$ is a compact metric space $X$
and $f:X\to X$ is a continuous map. A point $x\in X$
is recurrent when its trajectory $(f^n(x))_{n=0}^\infty$
returns repeatedly to every neighborhood of $x$.
The topological version of the famous Poincar\'e recurrence theorem states that,
with respect to every invariant Borel measure, almost every point is recurrent.
So if we look at the trajectory
of a typical point $x$, we see infinitely many indices $n$ such that $f^n(x)$ is close to $x$.
Moreover, continuity of $f$ implies that we see infinitely many
pairs of indices $i\ne j$ such that $f^i(x)$ is close to $f^j(x)$. Such pairs
are called recurrences.

Recurrences can be effectively visualized via recurrence plots, introduced by
Eckmann, Kamphorst, and Ruelle in \cite{eckmann1987recurrence}. In its basic form, a recurrence
plot is a black-and-white square image with black pixels representing recurrences.
Quantitative study of patterns occurring in recurrence plots
is the subject of recurrence quantification analysis initiated by
Zbilut and Webber \cite{zbilut1992embeddings}; for surveys see
\cite{marwan2007recurrence,webber2015recurrence}.

In connection with correlation dimension
\cite{grassberger1983characterization,grassberger1983measuring} and correlation entropy
\cite{takens1983invariants} introduced in the beginning of 80's,
the so-called correlation sums were studied. Recall that the
\emph{correlation sum $C_\varrho(x,n,\eps)$}
of (the beginning of) the trajectory of a point $x$ is
\begin{equation}\label{EQ:intro-corsum(m=1)}
  C_{\varrho}(x,n,\eps)
  =
  \frac{1}{n^2}\,
  \cardw\big\{
    (i,j):\ 0\le i,j<n,\ \varrho(f^i(x),f^j(x))\le \eps
  \big\},
\end{equation}
where $\varrho$ is the metric of $X$, $n\in\NNN$, and $\eps>0$.
It is the relative frequency of recurrences seen in the initial segment of the trajectory of
$x$, with closeness defined by the metric $\varrho$ and the distance threshold $\eps$
(with pairs $(i,i)$ counted as recurrences).
Correlation sums appear naturally in different contexts.
They are used in the estimation of correlation
dimension and correlation entropy. In the recurrence quantification analysis, several of the
basic quantitative characteristics can be expressed in terms of
correlation sums \cite{grendar2013strong}.
Also note that, by removing the diagonal pairs $(i,i)$, correlation sum becomes a $U$-statistic
\cite{denker1986rigorous,aaronson1996strongLaws}.

One of the fundamental results
states that, with respect to any $f$-ergodic measure $\mu$,
correlation sums of $\mu$-almost every point $x$ converges to the \emph{correlation integral}
\begin{equation}\label{EQ:intro-corint(m=1)}
\begin{split}
  \ccc_{\varrho}(\mu,\eps)
  &= \mu\times\mu\, \big\{(y,z)\in X\times X:\ \varrho(y,z)\le \eps \big\}
  \\
  &= \int_X \mu B_{\varrho}(x,\eps)\, d\mu(x)
\end{split}
\end{equation}
where $B_{\varrho}(x,\eps)$ denotes the closed ball
with the center $x$ and radius $\eps$. This result, proved
(by different methods and under different conditions) in
\cite{pesin1993rigorous,pesinTempelman1995,aaronson1996strongLaws,serinko1996ergodic,manningSimon1998},
justifies the use of correlation sums in estimating the correlation dimension,
as suggested by \cite{grassberger1983characterization,grassberger1983measuring}.

The correlation entropy, introduced by Takens \cite{takens1983invariants},
is a quantitative characteristic based on correlation sums / integrals.
To define it, in (\ref{EQ:intro-corsum(m=1)}) and (\ref{EQ:intro-corint(m=1)})
replace the metric $\varrho$ by Bowen's one
\begin{equation}\label{EQ:def-rhom}
  \varrho^f_m(y,z) = \max_{0\le i<m} \varrho(f^i(y),f^i(z))
  \qquad
  (y,z\in X).
\end{equation}
The obtained quantities are the correlation sum $C_m^f(x,n,\eps)$ and the correlation integral
$\ccc_m^f(\mu,\eps)$ corresponding to the trajectory of $x$ embedded to $X^m$.
The \emph{upper} and \emph{lower correlation entropies of an $f$-invariant measure $\mu$}
\cite[p.~361]{broer2010handbook}
quantify exponential decay rate of correlation integrals as $m$ grows to infinity
\begin{equation}\label{EQ:def-corel-entropy-metric}
\begin{split}
  &\entru(f,\mu) = \lim_{\eps\to 0} \limsup_{m\to\infty} (-1/m)\log \ccc^f_m(\mu,\eps),
\\
  &\entrl(f,\mu) = \lim_{\eps\to 0} \liminf_{m\to\infty} (-1/m)\log \ccc^f_m(\mu,\eps).
\end{split}
\end{equation}
Correlation entropy is a member of a $1$-parameter family of entropies
\cite{takens1998generalized,takens1999multifractal}.

The definition above which is recently used in the literature,
differs from the original one \cite{takens1983invariants}
by using correlation integrals instead of correlation sums.
Consequently, it depends on an invariant measure $\mu$ instead of a point $x$.
To distinguish the original definition from the recently used one, the
correlation entropy of $f$ at a point $x$ will be called local.
So, following \cite{takens1983invariants},
the \emph{upper} and \emph{lower local correlation entropies of $f$ at $x$} are defined by
\begin{equation}\label{EQ:def-corel-entropy-topol}
\begin{split}
  &\entru(f,x) = \lim_{\eps\to 0} \limsup_{m\to\infty} (-1/m)\log \cccl_m^f(x,\eps),
  \\
  &\entrl(f,x) = \lim_{\eps\to 0} \liminf_{m\to\infty} (-1/m)\log \cccu_m^f(x,\eps),
\end{split}
\end{equation}
where
\begin{equation}\label{EQ:c(x)-def}
  \cccu_m^f(x,\eps) = \limsup_{n\to\infty} C_m^f(x,n,\eps)\,,
  \qquad
  \cccl_m^f(x,\eps) = \liminf_{n\to\infty} C_m^f(x,n,\eps)\,.
\end{equation}
(Note that, in \cite{takens1983invariants}, the author considered the lower entropy only.)
Of course, due to the convergence of correlation sums to the correlation integral,
these local correlation entropies are often equal to the correlation entropy of a measure $\mu$.
Nevertheless, we believe that these local correlation entropies deserve to be studied,
for what we have several reasons.
First, the ergodic results hold (usually) only for almost every point, but,
from the topological point of view, local correlation entropy at every point
should be considered. Second, since local correlation entropy
depends solely on the trajectory of a selected point,
it is computationally more tractable than correlation entropy of a measure.
In fact, when estimating correlation entropy of
an invariant measure $\mu$, correlation sums are often used and thus
the local correlation entropy is being estimated; see e.g.~\cite[\textsection{}7.7]{broer2010handbook}.
Finally,
study of local correlation entropies can yield new results,
which have not yet been obtained for correlation entropy of a measure.

Let us now briefly outline the main results of this paper.
We start with summarizing basic properties of the local correlation entropy.
One of them is the relationship between
local correlation entropies of $f$ and those of its iterates $f^k$.
Since we were not able to find a corresponding result in the literature,
we have included a proof of it in this paper. The proof is based on a combinatorial lemma
(see \textsection\ref{SS:f^k-combinatorial-lemma}), which
gives a relationship between correlation sum of $f$ at a point $x$
and correlation sums of $f^k$ at points $f^h(x)$ ($0\le h <k$),
see Lemma~\ref{L:fk-graph-corollary}.

\begin{theoremmain}\label{T:intro-entropy-fk}
  Let $(X,f)$ be a dynamical system. Then, for every $k\in\NNN$ and $x\in X$,
  \begin{linenomath}\begin{equation*}
    \entru(f^k,x) = k\cdot \entru(f,x),
    \qquad
    \entrl(f^k,x) = k\cdot \entrl(f,x).
  \end{equation*}\end{linenomath}
\end{theoremmain}

The basic motivation of the paper comes from studying
the relationship between the local correlation entropies
and the topological entropy of the system $(X,f)$. Already Takens \cite{takens1983invariants}
proved that the lower local correlation entropy is bounded from above by the topological entropy of
$f$ restricted to the orbit closure of $x$.
In Proposition~\ref{P:correl-and-topol-entropy-upper-bound}
we prove that this is true also for the upper local correlation entropy, which yields that
\begin{linenomath}\begin{equation*}
  \sup_{x\in X} \entrl(f,x)
  \le
  \sup_{x\in X} \entru(f,x)
  \le
  h_{\topol}(f).
\end{equation*}\end{linenomath}
We will show that, for dynamical systems on topological graphs,
the above inequalities are in fact equalities.
Recall that a topological graph is a continuum which can be
written as the union of finitely many arcs any two of which are either
disjoint or intersect only in one or both of their end points.

\begin{theoremmain}\label{T:intro-entropy-graphs}
  Let $X$ be a topological graph and $f:X\to X$ be a continuous map. Then
  \begin{linenomath}\begin{equation*}
    \sup_{x\in X} \entrl(f,x)
    = \sup_{x\in X} \entru(f,x)
    = h_\topol(f).
  \end{equation*}\end{linenomath}
  Moreover, for every $h < h_\topol(f)$ there is a Cantor set $X_h\subseteq X$
  such that $\entrl(f,x) \ge h$ for every $x\in X_h$.
\end{theoremmain}

The conclusion of Theorem~\refThmB{} clearly also holds for any (uncountable) system
with zero topological
entropy, and for any full shift (see Corollary~\ref{C:subshifts-correl-entropy-bernoulli}).
However, for general dynamical systems the supremum of local correlation entropies can
be strictly smaller than the topological entropy. We prove this by constructing
a strictly ergodic subshift with positive entropy and with all local correlation
entropies equal to zero; our construction
is a modification of Grillenberger's one \cite{grillenberger1973constructions}.

\begin{theoremmain}\label{T:intro-strictlyErg}
  There is a subshift $(X,\sigma)$ such that
  \begin{enumerate}
    \item[(a)] $(X,\sigma)$ is strictly ergodic;
    \item[(b)] $(X,\sigma)$ has positive topological entropy;
    \item[(c)] the local correlation entropy $\entr(\sigma,y)$ at every $y\in X$ is zero;
    \item[(d)] the correlation entropy $\entr(\sigma,\mu)$ of the unique invariant measure
               $\mu$ is zero.
  \end{enumerate}
\end{theoremmain}

For some other results which are worth mentioning and are not covered by
Theorems A--C, see Corollary~\ref{C:entr(fhx)=entr(x)} and
Propositions~\ref{P:strong-law-uniquelyErgodic} and \ref{P:extension-of-subshift}.

%\medskip
The paper is organized as follows. In \textsection\ref{S:prelim} we recall definitions and
known facts which will be required later. In \textsection\textsection\ref{S:proofA}
and \ref{S:proofB} we prove Theorems~\refThmA{} and \refThmB{}. A technical lemma
concerning strictly ergodic subshifts is given in \textsection\ref{S:uniqueErg}.
Finally, in \textsection\ref{S:strictlyErgExample} we prove Theorem~\refThmC{}.

\vspace{-8pt}
%%%%%%%%%%%%%%%%%%%%%%%%%%%%%%%%%%%%%%%%%%%%%%%%%%%%%%%%%%%%%%%%%%%%%%%%%%%%%%%
%%%%%%%%%%%%%%%%%%%%%%%%%%%%%%%%%%%%%%%%%%%%%%%%%%%%%%%%%%%%%%%%%%%%%%%%%%%%%%%
%%%%%%%%%%%%%%%%%%%%%%%%%%%%%%%%%%%%%%%%%%%%%%%%%%%%%%%%%%%%%%%%%%%%%%%%%%%%%%%
%%%%%%%%%%%%%%%%%%%%%%%%%%%%%%%%%%%%%%%%%%%%%%%%%%%%%%%%%%%%%%%%%%%%%%%%%%%%%%%
%%%%%%%%%%%%%%%%%%%%%%%%%%%%%%%%%%%%%%%%%%%%%%%%%%%%%%%%%%%%%%%%%%%%%%%%%%%%%%%
\section{Preliminaries}\label{S:prelim}
We write $\NNN$ ($\NNN_0$) for the set of positive (nonnegative) integers.
If no confusion can arise, segments
of integers $\{n,n+1,\dots,m-1\}$ ($n<m$) will be denoted by $[n,m)$.
For $x\in\RRR$, $\lceil x \rceil$ and $\lfloor x \rfloor$
denotes the ceiling and the floor of $x$, that is, the smallest
integer greater than or equal to $x$, and
the largest integer smaller than or equal to $x$.
The cardinality of a set $A$ is denoted by $\card{A}$
or by $\cardw{A}$. By $\log$ we mean the natural logarithm.

Let $X=(X,\varrho)$ be a metric space and $A$ be a subset of it.
The diameter of a subset $A$ of $X$ is denoted by $\diam_\varrho(A)$.
By $B_\varrho(x,\eps)$ we mean the closed ball with the center $x$ and radius $\eps$,
and by $B_\varrho(A,\eps)$ we mean the union of all $B_\varrho(x,\eps)$ with $x\in A$.
The set $A$ is called \emph{$\eps$-separated} if $\varrho(x,y)>\eps$ for every $x\ne y$ from $A$.
It is said to \emph{$\eps$-span} $X$ if $B_\varrho(A,\eps)=X$.
The smallest cardinality of an $\eps$-spanning subset of $X$ is denoted by $r_\varrho(\eps,X)$,
and the largest cardinality of an $\eps$-separated subset of $X$ is denoted by $s_\varrho(\eps,X)$.
If $X$ is compact, both $r_\varrho(\eps,X)$ and $s_\varrho(\eps,X)$ are always finite,
and we can define
the \emph{upper} and \emph{lower box dimensions} of $X$ by \cite[\textsection{}2.1]{falconer2014fractal}
\begin{linenomath}\begin{equation*}
  \bar{d}_{\operatorname{box}}(X;\varrho)
    = \limsup_{r\to 0} \frac{\log r_\varrho(\eps,X)}{-\log\eps}
  \quad\text{and}\quad
  \ubar{d}_{\operatorname{box}}(X;\varrho)
    = \liminf_{r\to 0} \frac{\log r_\varrho(\eps,X)}{-\log\eps} \,.
\end{equation*}\end{linenomath}

%\medskip

A \emph{measure-theoretical dynamical system} is a quadruple $(X,\FFf,\mu,f)$,
where $X$ is a nonempty set, $\FFf$ is a $\sigma$-algebra of subsets of $X$,
$\mu$ is a probability measure on $(X,\FFf)$, and
$f:X\to X$ is an $\FFf$-measurable map preserving $\mu$ (that is,
$\mu\big(f^{-1}(A)\big) = \mu(A)$ for every $A\in\FFf$). The system
$(X,\FFf,\mu,f)$ is called \emph{ergodic} if $\mu(A)\in\{0,1\}$
for every $A\in\FFf$ such that $f^{-1}(A)=A$.

A \emph{(topological) dynamical system} is a pair $(X,f)$ where $X=(X,\varrho)$ is
a compact metric space and $f:X\to X$ is a continuous map. A set $A\subseteq X$
is said to be \emph{$f$-invariant} if $f(A)\subseteq A$. A system $(X,f)$ is
\emph{minimal} if there is no nonempty proper closed $f$-invariant subset of $X$.
Every point of a minimal system $(X,f)$ is \emph{almost periodic}:
for every neighborhood $U$ of $x$ the return time set $N(x,U)$ is syndetic
(that is, it has bounded gaps).

An \emph{$f$-invariant measure} of $(X,f)$ is any Borel probability measure $\mu$ such that
$(X,\BBb,\mu,f)$, with $\BBb$ denoting the Borel $\sigma$-algebra on $X$,
is a measure-theoretical dynamical system. If $(X,\BBb,\mu,f)$ is ergodic
we say that $\mu$ is \emph{$f$-ergodic}. A system $(X,f)$ is called \emph{uniquely ergodic}
if it has unique invariant measure; if it is also minimal it is called \emph{strictly ergodic}.

Let $(X,f)$ be a (topological) dynamical system and
$\varrho$ be the metric of $X$.
For $m\in\NNN$ define (equivalent) Bowen's metric $\varrho^f_m$ on $X$ as in Introduction.
We write $B_m^f(x,\eps)$, $r^f_m(\eps,K)$, and $s^f_m(\eps,K)$
instead of $B_{\varrho_m^f}(x,\eps)$, $r_{\varrho_m^f}(\eps,X)$, and $s_{\varrho_m^f}(\eps,X)$.
A subset $A$ of $X$ is called \emph{$(m,\eps)$-spanning} or \emph{$(m,\eps)$-separated}
if it is $\eps$-spanning or $\eps$-separated with respect to $\varrho_m^f$.
By Bowen's definition of the \emph{topological entropy},
\begin{linenomath}\begin{equation*}
 h_\topol{}(f)
 = \lim_{\eps\to 0} \limsup_{m\to\infty} (1/m) \log r^f_m(\eps,X)
 = \lim_{\eps\to 0} \limsup_{m\to\infty} (1/m) \log s^f_m(\eps,X) \,.
\end{equation*}\end{linenomath}

%%%%%%%%%%%%%%%%%%%%%%%%%%%%%%%%%%%%%%%%%%%%%%%%%%%%%%%%%%%%%%%%%%%%%%%%%%%%%%%
%%%%%%%%%%%%%%%%%%%%%%%%%%%%%%%%%%%%%%%%%%%%%%%%%%%%%%%%%%%%%%%%%%%%%%%%%%%%%%%
\subsection{Local correlation entropy}
Let $X=(X,\varrho)$ be a compact metric space with a metric $\varrho$, and $f:X\to X$
be a continuous map.
For $m\in\NNN$, $x\in X$, $\eps>0$, and $n\in\NNN$ define
the \emph{correlation sum} $C^f_m(x,n,\eps)$
by
\begin{linenomath}\begin{equation*}
  C^f_m(x,n,\eps) = \frac{1}{n^2} \,
  \cardw\big\{
    (i,j):\ 0\le i,j<n,\ \varrho^f_m(f^i(x),f^j(x))\le \eps
  \big\}.
\end{equation*}\end{linenomath}
Recall the definition (\ref{EQ:def-corel-entropy-topol}) of
the \emph{upper} and \emph{lower local correlation entropies}
$\entru(f,x)$ and $\entrl(f,x)$ of $f$ at $x$.
If $\entru(f,x)=\entrl(f,x)$ then we say that the local correlation entropy
$\entr(f,x)$ of $f$ at $x$
exists and we put $\entr(f,x)=\entru(f,x)=\entrl(f,x)$.
If $\mu$ is an $f$-invariant probability,
the \emph{upper} and \emph{lower (measure-theoretic) correlation entropies} (of order $2$)
of $f$ with respect to $\mu$
are defined by (\ref{EQ:def-corel-entropy-metric}),
see e.g.~\cite[p.~361]{broer2010handbook}.
Notice that in this paper we deal solely with
correlation entropies of order $q=2$; for the definition and properties of
(measure-theoretic) correlation entropies of arbitrary order $q$
see e.g.~\cite{takens1998generalized,verbitskiy2000phdthesis,broer2010handbook}.

%\medskip
In the following we summarize some of the known results which will be used later.
The first one was in fact proved in \cite[p.~355]{takens1983invariants}, see also
\cite[Lemma~2.14]{verbitskiy2000phdthesis}.
\begin{proposition}[\cite{takens1983invariants}] \label{P:takens-corrEntropy-measureEntropy}
  Let $(X,f)$ be a dynamical system. Then, for every $f$-invariant measure $\mu$,
  \begin{linenomath}\begin{equation*}
    \entru(f,\mu)\le h_\mu(f).
  \end{equation*}\end{linenomath}
\end{proposition}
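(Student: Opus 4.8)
The plan is to bound the exponential decay rate of the correlation integral by the measure-theoretic entropy by passing through a finite partition of small diameter, using the variational characterization $h_\mu(f)=\sup_\PPp h_\mu(f,\PPp)$ together with two soft inequalities. Throughout I will use that $\ccc_m^f(\mu,\eps)=(\mu\times\mu)\{(y,z):\varrho_m^f(y,z)\le\eps\}$.

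First I would fix $\eps>0$ and, using compactness of $X$, choose a finite Borel partition $\PPp=\{P_1,\dots,P_k\}$ of $X$ with $\diam_\varrho(P_j)\le\eps$ for each $j$. Writing $\PPp_m=\bigvee_{i=0}^{m-1}f^{-i}\PPp$, the key geometric observation is that every atom $A\in\PPp_m$ has $\varrho_m^f$-diameter at most $\eps$: if $y,z\in A$ then $f^i(y)$ and $f^i(z)$ lie in a common atom of $\PPp$ for each $0\le i<m$, whence $\varrho(f^i(y),f^i(z))\le\eps$ and therefore $\varrho_m^f(y,z)\le\eps$. Since the set of pairs lying in a common atom of $\PPp_m$ is exactly $\bigsqcup_{A\in\PPp_m}A\times A$, it is contained in $\{(y,z):\varrho_m^f(y,z)\le\eps\}$, which yields the lower bound
\begin{equation*}
  \ccc_m^f(\mu,\eps) \ge \sum_{A\in\PPp_m}\mu(A)^2.
\end{equation*}

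Next I would invoke the elementary inequality $-\log\sum_A p_A^2\le -\sum_A p_A\log p_A$, valid for any probability vector $(p_A)_A$ (it is Jensen's inequality for the concave logarithm, equivalently the monotonicity of R\'enyi entropies). Applied with $p_A=\mu(A)$ and combined with the previous display, this gives
\begin{equation*}
  -\frac{1}{m}\log\ccc_m^f(\mu,\eps)
  \le -\frac{1}{m}\log\sum_{A\in\PPp_m}\mu(A)^2
  \le \frac{1}{m}\,H_\mu(\PPp_m),
\end{equation*}
where $H_\mu(\PPp_m)=-\sum_A\mu(A)\log\mu(A)$. Taking $\limsup_{m\to\infty}$, the right-hand side converges to $h_\mu(f,\PPp)\le h_\mu(f)$; since the resulting bound does not depend on $\eps$, letting $\eps\to0$ gives $\entru(f,\mu)\le h_\mu(f)$.

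I do not expect a genuine obstacle: both inequalities are soft, and the only points needing care are the existence of a finite measurable partition of diameter $\le\eps$ (immediate from compactness) and the convergence $m^{-1}H_\mu(\PPp_m)\to h_\mu(f,\PPp)$ (the definition of entropy relative to $\PPp$). An alternative, more local route would start from $\ccc_m^f(\mu,\eps)=\int_X\mu(B_m^f(x,\eps))\,d\mu(x)$ and apply Jensen's inequality to the convex function $-\log$, reducing matters to the Brin--Katok local entropy formula; however, that approach requires the Shannon--McMillan--Breiman theorem together with an integration over the ergodic decomposition, so the partition argument above is the more self-contained one.
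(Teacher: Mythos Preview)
Your argument is correct. The paper does not supply its own proof of this proposition: it is stated in the preliminaries with an attribution to Takens~\cite{takens1983invariants} (see also \cite[Lemma~2.14]{verbitskiy2000phdthesis}), and no proof environment follows. What you wrote is precisely the standard argument found in those references: cover $X$ by a finite partition $\PPp$ of diameter at most $\eps$, observe that $\ccc_m^f(\mu,\eps)\ge\sum_{A\in\PPp_m}\mu(A)^2$ because each atom of $\PPp_m$ has $\varrho_m^f$-diameter at most $\eps$, then apply the R\'enyi/Jensen inequality $-\log\sum_A p_A^2\le H(p)$ and pass to the limit $m^{-1}H_\mu(\PPp_m)\to h_\mu(f,\PPp)\le h_\mu(f)$. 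Every step is sound, including the final passage $\eps\to0$, since the bound $h_\mu(f)$ on the inner $\limsup$ is uniform in $\eps$.
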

Correlation entropy $\entru(f,\mu)$ can be strictly smaller than measure-theoretic entropy.
For example, in \cite[Example~2.28]{verbitskiy2000phdthesis} the author constructs
a subshift $(X,\sigma)$ with invariant measure $\mu$ such that $\entr(f,\mu)=0$ and
$h_\mu(f) >0$.

The following result was first proved by Pesin \cite{pesin1993rigorous}, see also
\cite{pesinTempelman1995,aaronson1996strongLaws,serinko1996ergodic,manningSimon1998}.
(There, the space $X$ can be any complete separable metric space.)
\begin{proposition}[\cite{pesin1993rigorous}] \label{P:strong-law}
  Let $(X,f)$ be a dynamical system. Then, for every $f$-ergodic measure $\mu$,
  \begin{linenomath}\begin{equation*}
    \cccl^f_m(x,\eps)=\cccu^f_m(x,\eps)=\ccc^f_m(\mu,\eps)
  \end{equation*}\end{linenomath}
  for $\mu$-a.e.~$x\in X$ and every $\eps>0$ which is a continuity point of $\ccc^f_m(\mu,\cdot)$.
\end{proposition}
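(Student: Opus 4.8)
The plan is to realize the correlation sum as an integral against a product of empirical measures and then to combine the Birkhoff ergodic theorem with the portmanteau theorem. Fix $m\in\NNN$ and put
$F_\eps=\{(y,z)\in X\times X:\varrho^f_m(y,z)\le\eps\}$ and
$G_\eps=\{(y,z)\in X\times X:\varrho^f_m(y,z)<\eps\}$,
so that $F_\eps$ is closed, $G_\eps$ is open, and $\ccc^f_m(\mu,\eps)=(\mu\times\mu)(F_\eps)$.
For $x\in X$ and $n\in\NNN$ let $\mu_{x,n}=\frac1n\sum_{i=0}^{n-1}\delta_{f^i(x)}$ be the empirical measure along the orbit of $x$. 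Expanding the defining double sum gives
\begin{equation*}
  C^f_m(x,n,\eps)=(\mu_{x,n}\times\mu_{x,n})(F_\eps),
\end{equation*}
so the whole statement becomes a question about weak convergence of the product empirical measures.

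First I would show that $\mu_{x,n}\to\mu$ weakly for $\mu$-a.e.\ $x$. As $X$ is compact metric, $C(X)$ is separable; fix a countable dense set $\{g_k\}_{k}\subseteq C(X)$. By Birkhoff's ergodic theorem and ergodicity of $\mu$, for each $k$ there is a full-measure set on which $\mu_{x,n}(g_k)\to\int g_k\,d\mu$; intersecting over $k$ produces a single full-measure set $X_0$, \emph{independent of $\eps$}, on which $\mu_{x,n}(g)\to\int g\,d\mu$ for every $g\in C(X)$, i.e.\ $\mu_{x,n}\to\mu$ weakly. I would then upgrade this to the product: $\mu_{x,n}\to\mu$ weakly implies $\mu_{x,n}\times\mu_{x,n}\to\mu\times\mu$ weakly, because finite sums $\sum_l a_l(y)b_l(z)$ with $a_l,b_l\in C(X)$ are uniformly dense in $C(X\times X)$ by Stone--Weierstrass, convergence on each such function follows termwise from the marginal convergence, and the approximation error is controlled uniformly in $n$ since all the measures involved are probabilities.

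The final step copes with the discontinuity of $\mathbf{1}_{F_\eps}$, which ordinary weak convergence cannot detect. Applying the portmanteau theorem to the closed set $F_\eps$ and the open set $G_\eps$ gives, for every $x\in X_0$,
\begin{equation*}
  (\mu\times\mu)(G_\eps)
    \le \liminf_{n\to\infty} C^f_m(x,n,\eps)
    \le \limsup_{n\to\infty} C^f_m(x,n,\eps)
    \le (\mu\times\mu)(F_\eps).
\end{equation*}
The map $\eps\mapsto\ccc^f_m(\mu,\eps)=(\mu\times\mu)(F_\eps)$ is nondecreasing and right-continuous, and its jump at $\eps$ equals $(\mu\times\mu)(F_\eps\setminus G_\eps)=(\mu\times\mu)\{\varrho^f_m=\eps\}$; hence $\eps$ being a continuity point means exactly that $(\mu\times\mu)(G_\eps)=(\mu\times\mu)(F_\eps)=\ccc^f_m(\mu,\eps)$. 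The outer bounds then coincide and squeeze the limit, yielding $\cccl^f_m(x,\eps)=\cccu^f_m(x,\eps)=\ccc^f_m(\mu,\eps)$ for every $x\in X_0$ and every continuity point $\eps$ at once.

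I expect the genuinely delicate point to be precisely this non-continuity of the indicator: weak convergence alone is blind to it, and the continuity-point hypothesis is indispensable, entering through the portmanteau sandwich between the open and closed versions of the threshold set. By contrast, the passage from marginal to product weak convergence is routine but worth recording, since the correlation sum really lives on $X\times X$ and the product empirical measure $\mu_{x,n}\times\mu_{x,n}$ is \emph{not} the orbit average of $(x,x)$ under $f\times f$.
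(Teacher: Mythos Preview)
Your argument is correct. Note, however, that the paper does not give its own proof of this proposition: it is quoted as a known result, with references to Pesin and to several later papers. The only hint the paper gives about the underlying method is in the proof of the next proposition (the uniquely ergodic case), where it says one may repeat the proof from \cite{manningSimon1998}, replacing the use of Birkhoff's ergodic theorem (which defines a full-measure set $Y$) by Oxtoby's theorem (which allows $Y=X$). Your proof is precisely along these lines: Birkhoff on a countable dense family in $C(X)$ to get a.e.\ weak convergence of the empirical measures, passage to the product via Stone--Weierstrass, and then the portmanteau sandwich between $G_\eps$ and $F_\eps$ to handle the discontinuous indicator. So your proposal agrees with the approach the paper implicitly points to.
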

As a consequence of Proposition~\ref{P:strong-law} we obtain that, for ergodic $\mu$,
\begin{equation}\label{EQ:entr-strong-law}
    \entru(f,x) = \entru(f,\mu)
    \quad\text{and}\quad
    \entrl(f,x) = \entrl(f,\mu)
\end{equation}
for $\mu$-a.e.~$x\in X$.
For uniquely ergodic systems one can strengthen the previous theorem and obtain convergence of
correlation sums to correlation integral for every point.
\begin{proposition}\label{P:strong-law-uniquelyErgodic}
  Let $(X,f)$ be a uniquely ergodic dynamical system and $\mu$ be the unique $f$-invariant measure.
  Then
  \begin{linenomath}\begin{equation*}
    \cccl^f_m(x,\eps)=\cccu^f_m(x,\eps)=\ccc^f_m(\mu,\eps)
  \end{equation*}\end{linenomath}
  for {every} $x\in X$ and every $\eps>0$ which is a continuity point of $\ccc^f_m(\mu,\cdot)$.
\end{proposition}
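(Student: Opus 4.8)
The plan is to recognise the correlation sum as a double ergodic average and to exploit the uniform convergence of one-dimensional Birkhoff averages that characterises unique ergodicity. Writing $\phi=\varrho^f_m$, which is a continuous function on $X\times X$, and $D_\eps=\{(y,z):\phi(y,z)\le\eps\}$, we have $C^f_m(x,n,\eps)=\frac{1}{n^2}\sum_{i,j=0}^{n-1}\mathbf 1_{D_\eps}(f^i(x),f^j(x))$ while $\ccc^f_m(\mu,\eps)=(\mu\times\mu)(D_\eps)$. Thus the assertion reduces to the convergence, for \emph{every} $x$, of this double average of the indicator $\mathbf 1_{D_\eps}$ to its double integral $(\mu\times\mu)(D_\eps)$.

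First I would establish the corresponding statement for a continuous integrand: for every $G\in C(X\times X)$ and every $x\in X$,
\[
  \frac{1}{n^2}\sum_{i,j=0}^{n-1} G(f^i(x),f^j(x)) \xrightarrow[n\to\infty]{} \int_{X\times X} G\, d(\mu\times\mu).
\]
For a product $G=a\otimes b$ with $a,b\in C(X)$ the left-hand side factorises as the product of the two Birkhoff averages $\frac1n\sum_i a(f^i(x))$ and $\frac1n\sum_j b(f^j(x))$, each of which tends to $\int a\,d\mu$, resp.\ $\int b\,d\mu$, for every $x$ by the classical characterisation of unique ergodicity. By linearity this extends to the algebra of finite sums of such products; since that algebra separates points and contains the constants, it is $\|\cdot\|_\infty$-dense in $C(X\times X)$ by Stone--Weierstrass, and a routine sup-norm approximation (using that the double average of $G$ never exceeds $\|G\|_\infty$) upgrades the convergence to all $G\in C(X\times X)$, for every $x$.

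Next I would remove the discontinuity of $\mathbf 1_{D_\eps}$ by a sandwich argument. For $\delta>0$ choose continuous functions $g^-_\delta,g^+_\delta$ on $X\times X$ depending only on $\phi$ (piecewise linear in $\phi$, e.g.\ $g^+_\delta=\max(0,\min(1,(\eps+\delta-\phi)/\delta))$ and $g^-_\delta=\max(0,\min(1,(\eps-\phi)/\delta))$), so that $\mathbf 1_{D_{\eps-\delta}}\le g^-_\delta\le \mathbf 1_{D_\eps}\le g^+_\delta\le \mathbf 1_{D_{\eps+\delta}}$. Applying the continuous case to $g^\pm_\delta$ and sandwiching the correlation sums via $\frac{1}{n^2}\sum g^-_\delta(\cdots)\le C^f_m(x,n,\eps)\le\frac{1}{n^2}\sum g^+_\delta(\cdots)$, then taking $\liminf$ and $\limsup$ in $n$, yields
\[
  \ccc^f_m(\mu,\eps-\delta)\le \cccl^f_m(x,\eps)\le \cccu^f_m(x,\eps)\le \ccc^f_m(\mu,\eps+\delta).
\]
Letting $\delta\to0$ and using that $\eps$ is a continuity point of $\ccc^f_m(\mu,\cdot)$ collapses the outer bounds to $\ccc^f_m(\mu,\eps)$, giving the desired equalities.

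I expect the only real subtlety to be the discontinuity of the indicator $\mathbf 1_{D_\eps}$: the argument works precisely because the ``sphere'' $\{(y,z):\phi(y,z)=\eps\}$ is $(\mu\times\mu)$-null, which is exactly the continuity-point hypothesis on $\eps$. A second point requiring care is that we need the averages to converge for \emph{every} $x$, not merely for $\mu$-almost every $x$; this is where unique ergodicity (yielding uniform convergence of the one-dimensional Birkhoff averages) is essential, and where Proposition~\ref{P:strong-law} alone would not suffice.
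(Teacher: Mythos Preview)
Your proof is correct and follows essentially the same route the paper indicates: it replaces the a.e.\ convergence from Birkhoff's ergodic theorem by the everywhere (indeed uniform) convergence of Birkhoff averages of continuous functions guaranteed by unique ergodicity (Oxtoby's theorem), and then passes from continuous integrands to the indicator $\mathbf 1_{D_\eps}$ by a sandwich/continuity-point argument. The paper's own proof is a one-line reference to \cite{manningSimon1998} with exactly this substitution; your version simply spells out the details (factorisation on tensor products, Stone--Weierstrass, and the $g^\pm_\delta$ sandwich), all of which are standard and correctly executed.
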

\begin{proof}
  For any $y\in X$, the Dirac measure at $y$ is denoted by $\delta_y$. Fix $x\in X$, $m\in\NNN$, 
  and $\eps>0$. Unique ergodicity of $(X,f)$ implies that measures 
  $\mu_n=(1/n) \sum_{i=0}^{n-1} \delta_{f^i(x)}$ converge to $\mu$ in the weak*-topology
  (see e.g.~\cite[p.~106]{einsiedler2011ergodic}). 
  Thus $\mu_n\times\mu_n \to \mu\times\mu$ \cite[Lemma~1.1, p.~57]{parthasarathy1967probability}.
  The set $B=\{(x,y)\in X\times X:\ \varrho_m^f(x,y)\le\eps\}$ is closed
  and $C^f_m(x,n,\eps)=\mu_n\times\mu_n(B)$ for every $n$, so
  \begin{linenomath}\begin{equation*}
	\cccu^f_m(x,\eps)
	= \limsup_{n\to\infty} \mu_n\times\mu_n(B)
	\le \mu\times\mu(B)
	=\ccc^f_m(\mu,\eps)
  \end{equation*}\end{linenomath}
  (see e.g.~\cite[Theorem~6.1, p.~40]{parthasarathy1967probability}).
  On the other hand, the set $B^o=\{(x,y)\in X\times X:\ \varrho_m^f(x,y)<\eps\}\subseteq B$
  is open and so
  \begin{linenomath}\begin{equation*}
	\cccl^f_m(x,\eps)
	\ge \liminf_{n\to\infty} \mu_n\times\mu_n(B^o)
	\ge \mu\times\mu(B^o)
	=\lim_{\eps'\nearrow \eps}\ccc^f_m(\mu,\eps').
  \end{equation*}\end{linenomath}  
  Hence $\cccl^f_m(x,\eps)=\cccu^f_m(x,\eps)=\ccc^f_m(\mu,\eps)$
  provided $\ccc^f_m(\mu,\cdot)$ is continuous at $\eps$.
\end{proof}

%%%%%%%%%%%%%%%%%%%%%%%%%%%%%%%%%%%%%%%%%%%%%%%%%%%%%%%%%%%%%%%%%%%%%%%%%%%%%%%
%%%%%%%%%%%%%%%%%%%%%%%%%%%%%%%%%%%%%%%%%%%%%%%%%%%%%%%%%%%%%%%%%%%%%%%%%%%%%%%
\subsection{Correlation dimension}
Correlation dimension \cite{grassberger1983characterization,grassberger1983measuring}
is another widely used
characteristic based on the correlation integral.
Recall that upper and lower correlation dimensions (of order $2$) of a measure $\mu$
are defined by
\begin{equation}\label{EQ:def-corel-dim-metric}
  \cdimu(\mu) = \limsup_{\eps\to 0} \frac{\log \ccc_\varrho(\mu,\eps)}{-\log\eps},
  \qquad
  \cdiml(\mu) = \liminf_{\eps\to 0} \frac{\log \ccc_\varrho(\mu,\eps)}{-\log\eps} \,.
\end{equation}
One can analogously define upper and lower \emph{local correlation dimensions}
$\cdimu(f,x)$ and $\cdiml(f,x)$ by
\begin{equation}\label{EQ:def-corel-dim-topol}
  \cdimu(f,x) = \limsup_{\eps\to 0} \frac{\log \cccl^f_1(x,\eps)}{-\log\eps},
  \qquad
  \cdiml(f,x) = \liminf_{\eps\to 0} \frac{\log \cccu^f_1(x,\eps)}{-\log\eps} \,.
\end{equation}

%%%%%%%%%%%%%%%%%%%%%%%%%%%%%%%%%%%%%%%%%%%%%%%%%%%%%%%%%%%%%%%%%%%%%%%%%%%%%%%
%%%%%%%%%%%%%%%%%%%%%%%%%%%%%%%%%%%%%%%%%%%%%%%%%%%%%%%%%%%%%%%%%%%%%%%%%%%%%%%
\subsection{Shifts and subshifts}\label{SS:shifts}
Let $p\ge 2$ be an integer and $\AAa_p=\{0,1,\dots,p-1\}$. Put
\begin{linenomath}\begin{equation*}
 \Sigma_p = \AAa_p^{\NNN_0}=\{x=(x_i)_{i=0}^\infty:\ x_i\in \AAa_p \text{ for every } i\}.
\end{equation*}\end{linenomath}
Define a metric $\varrho$ on $\Sigma_p$ by
\begin{linenomath}\begin{equation*}
 \varrho(x,y) = 2^{-k},\qquad
 k=\min\{i\ge 0:\ x_i\ne y_i\}
\end{equation*}\end{linenomath}
for $x\ne y$, and $\varrho(x,y)=0$ for $x=y$; thus $\varrho(x,y)\le \frac12$ if and only if $x_0=y_0$.
Then $(\Sigma_p,\varrho)$ is a compact metric space homeomorphic to the Cantor ternary set.
The \emph{shift} $\sigma:\Sigma_p\to\Sigma_p$ is defined by
\begin{linenomath}\begin{equation*}
 \sigma((x_i)_i) = (y_i)_i,\qquad\text{where } y_i = x_{i+1}
 \text{ for every }i.
\end{equation*}\end{linenomath}
The dynamical system $(\Sigma_p,\sigma)$ is called the \emph{(one-sided) full shift}
on $p$ symbols.
If $X\subseteq \Sigma_p$ is a nonempty closed $\sigma$-invariant  set
then the restriction $\sigma|_X:X\to X$ is called a \emph{subshift};
since no confusion can arise, the restriction $\sigma|_X$ will be denoted by $\sigma$.

The members of $\AAa_p^* = \bigcup_{k\ge 0} \AAa_p^k$ are called \emph{words}.
Let $k\ge 0$ and $w=w_0\dots w_{k-1}\in\AAa_p^k$. Then we say
that $w$ is a \emph{$k$-word} and that the \emph{length} of it is $\abs{w}=k$.
The \emph{cylinder $[w]$} is the clopen set
$\{x\in\Sigma_p:\ x_i=w_i \text{ for every }0\le i<k\}$.

For a $\sigma$-invariant measure $\mu$ put
\begin{equation}\label{EQ:subshifts-muk-def}
    \tilde\mu{(k)} = \sum_{w\in\AAa_p^k} \big(\mu([w])\big)^2.
\end{equation}
The next two lemmas (for the second one see e.g.~\cite[p.~774]{takens1998generalized})
follows from the fact that $\varrho^\sigma_m(y,z)\le 2^{-k}$
if and only if $\varrho(y,z)\le 2^{-(k+m-1)}$ if and only if
there is $w\in\AAa_p^{k+m-1}$ such that $y,z\in[w]$.

\begin{lemma}\label{L:subshifts-correl-entropy-local}
  Let $(X,\sigma)$ be a subshift and $\eps\in(0,1]$.
  Let $k\ge 0$ be an integer such that $\eps\in [2^{-k},2^{-(k-1)})$.
  Then, for every $x\in X$ and $m,n\in\NNN$,
  \begin{linenomath}\begin{equation*}
    C_m^\sigma(x,n,\eps) = C_1^\sigma\big(x,n,2^{-(k+m-1)}\big).
  \end{equation*}\end{linenomath}
  Consequently,
  \begin{linenomath}\begin{equation*}
    \cccu_m^\sigma(x,\eps) = \cccu_1^\sigma\big(x,2^{-(k+m-1)}\big)
    \qquad\text{and}\qquad
    \cccl_m^\sigma(x,\eps) = \cccl_1^\sigma\big(x,2^{-(k+m-1)}\big).
  \end{equation*}\end{linenomath}
\end{lemma}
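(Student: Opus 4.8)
The plan is to prove the pointwise identity for $C_m^\sigma(x,n,\eps)$ first, as an equality of two explicitly counted sets of index pairs, and then obtain the two statements about $\cccu$ and $\cccl$ by passing to $\limsup_{n\to\infty}$ and $\liminf_{n\to\infty}$. The key observation driving everything is that the Bowen metric $\varrho_m^\sigma$ takes values only in $\{0\}\cup\{2^{-l}:l\ge0\}$, being a maximum of finitely many values of $\varrho$, each of which has this form. This discreteness is exactly what allows the arbitrary threshold $\eps$ to be replaced by the dyadic threshold $2^{-k}$.

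First I would record that for all $y,z\in X$ the inequality $\varrho_m^\sigma(y,z)\le\eps$ is equivalent to $\varrho_m^\sigma(y,z)\le 2^{-k}$. In one direction, if $\varrho_m^\sigma(y,z)\le\eps$ then, since $\eps<2^{-(k-1)}$ and $\varrho_m^\sigma(y,z)$ equals $0$ or some $2^{-l}$, we get $2^{-l}<2^{-(k-1)}$, hence $l\ge k$ and $\varrho_m^\sigma(y,z)\le 2^{-k}$; the reverse direction is immediate from $2^{-k}\le\eps$. Applying this with $y=\sigma^i(x)$ and $z=\sigma^j(x)$, the pairs $(i,j)$ counted by $C_m^\sigma(x,n,\eps)$ are precisely those with $\varrho_m^\sigma(\sigma^i(x),\sigma^j(x))\le 2^{-k}$. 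Then I would invoke the fact recalled just before the lemma, that $\varrho_m^\sigma(y,z)\le 2^{-k}$ holds if and only if $\varrho(y,z)\le 2^{-(k+m-1)}$, to rewrite the counting condition as $\varrho(\sigma^i(x),\sigma^j(x))\le 2^{-(k+m-1)}$. Since $\varrho_1^\sigma=\varrho$, this is exactly the condition defining $C_1^\sigma(x,n,2^{-(k+m-1)})$; as both correlation sums use the same normalization $1/n^2$ over the same range $0\le i,j<n$, the equality $C_m^\sigma(x,n,\eps)=C_1^\sigma(x,n,2^{-(k+m-1)})$ follows at once.

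For the consequences I would simply take $\limsup_{n\to\infty}$ and $\liminf_{n\to\infty}$ of this equality, using the definitions $\cccu_m^\sigma(x,\eps)=\limsup_{n\to\infty}C_m^\sigma(x,n,\eps)$ and $\cccl_m^\sigma(x,\eps)=\liminf_{n\to\infty}C_m^\sigma(x,n,\eps)$. There is no serious obstacle here; the whole argument is bookkeeping once the discreteness of $\varrho_m^\sigma$ is noted. The one point I would take care over is the interplay between the strict inequality $\eps<2^{-(k-1)}$ and the non-strict inequality $2^{-k}\le\eps$ built into the hypothesis $\eps\in[2^{-k},2^{-(k-1)})$: it is precisely this asymmetry that makes the reduction to the dyadic threshold valid, and I would let the displayed chain of inequalities handle the boundary cases $k=0$ and equality at the endpoints rather than treat them informally.
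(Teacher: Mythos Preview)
Your proposal is correct and follows exactly the approach the paper indicates: the paper does not give a separate proof but merely states, immediately before the lemma, the key equivalence $\varrho^\sigma_m(y,z)\le 2^{-k}\Leftrightarrow\varrho(y,z)\le 2^{-(k+m-1)}$, and your argument is a careful unpacking of that, together with the (implicit in the paper) reduction from the arbitrary threshold $\eps$ to the dyadic threshold $2^{-k}$ via discreteness of the metric values.
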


\begin{lemma}\label{L:subshifts-correl-entropy-measure}
  Let $(X,\sigma)$ be a subshift, $\mu$ be a $\sigma$-invariant measure, and $\eps\in(0,1]$.
  Let $k\ge 0$ be an integer such that $\eps\in [2^{-k},2^{-(k-1)})$.
  Then, for every $m\in\NNN$,
  \begin{linenomath}\begin{equation*}
    \ccc^\sigma_m(\mu,\eps)
    = \ccc^\sigma_1(\mu,2^{-(k+m-1)})
    = \tilde\mu{(k+m-1)},
  \end{equation*}\end{linenomath}
  and so
  \begin{linenomath}\begin{equation*}
  \begin{split}
    &\entru(\sigma,\mu) = \limsup_{m\to\infty}  (-1/m)\log \tilde\mu(m),
    \\[2mm]
    &\entrl(\sigma,\mu) = \liminf_{m\to\infty}  (-1/m)\log \tilde\mu(m).
  \end{split}
  \end{equation*}\end{linenomath}
\end{lemma}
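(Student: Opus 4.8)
The plan is to rewrite every correlation integral in terms of cylinder measures by means of the metric equivalence recalled just before the statement, and then to read off the two entropy formulas through a harmless reindexing. Throughout put $\ell=k+m-1$.

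First I would prove the two equalities for $\ccc^\sigma_m(\mu,\eps)$. Since $\varrho$ takes its nonzero values in $\{2^{-j}:j\ge0\}$, so does Bowen's metric $\varrho^\sigma_m$, being a maximum of such values. As $\eps\in[2^{-k},2^{-(k-1)})$, a value $2^{-j}$ satisfies $2^{-j}\le\eps$ exactly when $2^{-j}\le 2^{-k}$; hence $\varrho^\sigma_m(y,z)\le\eps$ is equivalent to $\varrho^\sigma_m(y,z)\le 2^{-k}$, which by the recalled equivalence is in turn equivalent to $\varrho(y,z)=\varrho^\sigma_1(y,z)\le 2^{-\ell}$. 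Integrating the indicator of this set against $\mu\times\mu$ gives
\[
  \ccc^\sigma_m(\mu,\eps)
  = \mu\times\mu\{(y,z):\varrho^\sigma_1(y,z)\le 2^{-\ell}\}
  = \ccc^\sigma_1(\mu,2^{-\ell}),
\]
which is the first equality. For the second, note that $\varrho(y,z)\le 2^{-\ell}$ holds precisely when $y$ and $z$ agree on their first $\ell$ coordinates, i.e.\ when $y,z\in[w]$ for the common word $w=y_0\dots y_{\ell-1}\in\AAa_p^\ell$. Thus the set above is the disjoint union $\bigcup_{w\in\AAa_p^\ell}[w]\times[w]$, and since $(\mu\times\mu)([w]\times[w])=\mu([w])^2$, summing yields
\[
  \ccc^\sigma_1(\mu,2^{-\ell})=\sum_{w\in\AAa_p^\ell}\mu([w])^2=\tilde\mu(\ell)=\tilde\mu(k+m-1).
\]

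Finally I would substitute into the definition (\ref{EQ:def-corel-entropy-metric}). For fixed $\eps$, hence fixed $k$, as $m\to\infty$ we have $\ell\to\infty$ and
\[
  (-1/m)\log\ccc^\sigma_m(\mu,\eps)
  = \frac{-\log\tilde\mu(\ell)}{\ell-k+1}
  = \frac{\ell}{\ell-k+1}\cdot\frac{-\log\tilde\mu(\ell)}{\ell}.
\]
Because the length-$\ell$ cylinders partition the space, $\sum_{w}\mu([w])=1$ forces $\tilde\mu(\ell)\le1$, so $a_\ell:=-\log\tilde\mu(\ell)\ge0$; and $\ell/(\ell-k+1)\to1$ as $\ell\to\infty$. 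Using the elementary identities $\limsup(c_\ell x_\ell)=(\lim c_\ell)\limsup x_\ell$ and $\liminf(c_\ell x_\ell)=(\lim c_\ell)\liminf x_\ell$, valid for $x_\ell\ge0$ and $c_\ell\to c>0$, the inner $\limsup$ and $\liminf$ over $m$ equal $\limsup_\ell(-1/\ell)\log\tilde\mu(\ell)$ and $\liminf_\ell(-1/\ell)\log\tilde\mu(\ell)$. These are independent of $\eps$, so the outer limit $\lim_{\eps\to0}$ is vacuous and the stated formulas for $\entru(\sigma,\mu)$ and $\entrl(\sigma,\mu)$ follow.

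The argument is essentially bookkeeping; the only delicate point is this last reindexing, where one must verify that replacing the normalizing index $m$ by $\ell=k+m-1$ does not alter the exponential rate. This is exactly where the convergence $\ell/(\ell-k+1)\to1$ together with the nonnegativity of $-\log\tilde\mu(\ell)$ is used to eliminate the dependence on $\eps$ (equivalently, on $k$).
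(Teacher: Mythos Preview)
Your proof is correct and follows precisely the approach the paper indicates: the paper does not spell out a proof but simply notes that the lemma ``follows from the fact that $\varrho^\sigma_m(y,z)\le 2^{-k}$ if and only if $\varrho(y,z)\le 2^{-(k+m-1)}$ if and only if there is $w\in\AAa_p^{k+m-1}$ such that $y,z\in[w]$,'' which is exactly the equivalence you invoke and unpack. Your treatment of the reindexing step (showing that passing from $m$ to $\ell=k+m-1$ does not change the exponential rate, using $\tilde\mu(\ell)\le 1$ and $\ell/(\ell-k+1)\to 1$) fills in the one detail the paper leaves implicit.
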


If $\pi=(\pi_0,\dots,\pi_{p-1})$ is a probability vector (that is, $\pi_i\ge 0$
and $\sum_i\pi_i=1$),
then the ($\sigma$-invariant Borel probability) measure $\mu$ on $(\Sigma_p, \BBb(\Sigma_p))$
such that $\mu([w])=\prod_{i<k} \pi_{w_i}$ for every $k\ge 1$ and $w\in\AAa_p^k$, is called
the \emph{Bernoulli measure} generated by $\pi$.
An easy consequence of Lemma~\ref{L:subshifts-correl-entropy-measure} is the following
result, see \cite[p.~773]{takens1998generalized}, \cite[Sect.~2.5.2]{verbitskiy2000phdthesis}.

\begin{lemma}\label{L:subshifts-correl-entropy-bernoulli}
  Let $(\Sigma_p,\sigma)$ be the full shift, $\pi=(\pi_0,\dots,\pi_{p-1})$ be a probability vector,
  and $\mu$ be the Bernoulli measure generated by $\pi$.
  Then
\vspace*{3pt}  \begin{linenomath}\begin{equation*}
    \entr(\sigma,\mu) = -\log\left(\sum_{i<p} \pi_i^2\right).
  \end{equation*}\end{linenomath}
\end{lemma}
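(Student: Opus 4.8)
The plan is to reduce the statement entirely to Lemma~\ref{L:subshifts-correl-entropy-measure}, which already expresses the two correlation entropies of a subshift in terms of the quantities $\tilde\mu(m)=\sum_{w\in\AAa_p^m}\big(\mu([w])\big)^2$, namely
\[
  \entru(\sigma,\mu) = \limsup_{m\to\infty}(-1/m)\log\tilde\mu(m),
  \qquad
  \entrl(\sigma,\mu) = \liminf_{m\to\infty}(-1/m)\log\tilde\mu(m).
\]
Thus it suffices to determine the exponential rate of decay of $\tilde\mu(m)$ when $\mu$ is the Bernoulli measure generated by $\pi$.

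First I would compute $\tilde\mu(m)$ in closed form. Since $\mu([w])=\prod_{i<m}\pi_{w_i}$ for every word $w=w_0\dots w_{m-1}\in\AAa_p^m$, we have $\big(\mu([w])\big)^2=\prod_{i<m}\pi_{w_i}^2$. The one substantive step is the factorization identity: summing a product over coordinates splits into a product of coordinate-wise sums, so that
\[
  \tilde\mu(m)
  = \sum_{w\in\AAa_p^m}\prod_{i<m}\pi_{w_i}^2
  = \prod_{i<m}\left(\sum_{j<p}\pi_j^2\right)
  = \left(\sum_{j<p}\pi_j^2\right)^{m}.
\]
Writing $S=\sum_{j<p}\pi_j^2$, this gives $(-1/m)\log\tilde\mu(m)=(-1/m)\log(S^m)=-\log S$ for every $m$. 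Since this sequence is \emph{constant}, its $\limsup$ and $\liminf$ both equal $-\log S$; hence $\entru(\sigma,\mu)=\entrl(\sigma,\mu)=-\log\big(\sum_{i<p}\pi_i^2\big)$, so $\entr(\sigma,\mu)$ exists and equals the claimed value.

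Regarding difficulty, there is no real obstacle: the result is a direct corollary of Lemma~\ref{L:subshifts-correl-entropy-measure}, and the only point requiring any care is the product factorization, which relies on the Bernoulli (independence) structure of $\mu$. For a general invariant measure the sum $\tilde\mu(m)$ would not factor, and one could no longer expect the defining sequence to be constant; it is precisely the exact identity $\tilde\mu(m)=S^m$ that collapses the upper and lower correlation entropies to a single value and yields the clean formula.
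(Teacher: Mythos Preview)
Your proposal is correct and matches the paper's intended approach: the paper simply states that the lemma is ``an easy consequence of Lemma~\ref{L:subshifts-correl-entropy-measure}'' without spelling out details, and your computation $\tilde\mu(m)=\big(\sum_{j<p}\pi_j^2\big)^m$ via the product factorization is exactly that easy consequence.
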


\begin{corollary}\label{C:subshifts-correl-entropy-bernoulli}
  Let $p\ge 2$ and let $(\Sigma_p,\sigma)$ be the full shift. Then for every $h\in[0,\log p]$
  there is a Cantor subset $X_h$ of $\Sigma_p$ such that
 \vspace*{3pt} \begin{linenomath}\begin{equation*}
    \entr(\sigma,x)=h
    \qquad\text{for every } x\in X_h.
  \end{equation*}\end{linenomath}
\end{corollary}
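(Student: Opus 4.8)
The plan is to realise $h$ as the correlation entropy of an ergodic measure on $\Sigma_p$, to transfer this value to $\mu$-almost every point by the strong law, and then to carve out a Cantor set from the resulting full-measure set. Throughout, write $g(\pi)=-\log\big(\sum_{i<p}\pi_i^2\big)$ for a probability vector $\pi=(\pi_0,\dots,\pi_{p-1})$.

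First I would dispose of the range $h\in(0,\log p]$. The function $g$ is continuous on the (connected, compact) simplex of probability vectors; since $\sum_i\pi_i^2$ equals $1$ at each vertex and attains its minimum $1/p$ at the barycenter $(1/p,\dots,1/p)$, the image of $g$ is exactly $[0,\log p]$. Hence there is a probability vector $\pi$ with $g(\pi)=h$, and for $h>0$ this $\pi$ has at least two positive entries. Let $\mu$ be the Bernoulli measure generated by $\pi$; it is non-atomic and ergodic (indeed mixing), and by Lemma~\ref{L:subshifts-correl-entropy-bernoulli} we have $\entr(\sigma,\mu)=h$. Applying the strong-law consequence (\ref{EQ:entr-strong-law}) to the ergodic measure $\mu$, the set $Y=\{x:\ \entr(\sigma,x)=h\}$ has full $\mu$-measure. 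This $Y$ is Borel, since the local correlation entropy is assembled by countable $\sup$/$\inf$/limit operations from the continuous maps $x\mapsto C^\sigma_m(x,n,\eps)$.

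It remains to extract a Cantor set from $Y$. By inner regularity of the Borel measure $\mu$ there is a closed set $K\subseteq Y$ with $\mu(K)>0$; as $\mu$ is non-atomic, $K$ is uncountable. Being a closed subset of the Cantor space $\Sigma_p$, the set $K$ is compact, metrizable and totally disconnected, so its perfect kernel (the nonempty perfect set produced by the Cantor--Bendixson decomposition of the uncountable closed set $K$) is a Cantor set $X_h\subseteq K\subseteq Y$, and every $x\in X_h$ satisfies $\entr(\sigma,x)=h$.

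Finally, the endpoint $h=0$ must be treated separately, because there the Bernoulli realisation collapses to a point mass. Here I would take $X_0$ to be an infinite minimal subshift of zero topological entropy inside $\Sigma_2\subseteq\Sigma_p$ (e.g.\ a Sturmian subshift), which is a Cantor set and is uniquely ergodic; if $\nu$ is its unique invariant measure then $h_\nu(\sigma)=0$, so $\entr(\sigma,\nu)=0$ by Proposition~\ref{P:takens-corrEntropy-measureEntropy}, and Proposition~\ref{P:strong-law-uniquelyErgodic} upgrades this to $\entr(\sigma,x)=0$ for every $x\in X_0$. The main obstacle is precisely the gap between ``almost every point'' and ``a Cantor set of points'': the ergodic input only controls a full-measure set, so the heart of the argument is combining regularity of $\mu$ with the Cantor--Bendixson theorem (equivalently, the perfect-set property) to produce a genuine perfect set, together with the care needed at the degenerate endpoint $h=0$.
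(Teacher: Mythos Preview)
Your argument is correct and, for $h\in(0,\log p]$, follows the paper's proof exactly: choose a Bernoulli measure $\mu$ with $\entr(\sigma,\mu)=h$, invoke the strong law to obtain a full-$\mu$-measure Borel set of points with local correlation entropy $h$, and extract a Cantor set from it. The paper cites the perfect-set property for uncountable Borel subsets of Polish spaces; your regularity-plus-Cantor--Bendixson route is an explicit instance of the same fact.

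You rightly single out $h=0$ for separate treatment --- the paper glosses over this, but its assertion that ``$\mu$ is non-atomic'' fails precisely there, since $\sum_i\pi_i^2=1$ forces $\pi$ to be a vertex of the simplex and $\mu$ a Dirac mass. Your Sturmian fix is correct; a shorter alternative is to bypass the measure theory entirely and apply Proposition~\ref{P:correl-and-topol-entropy-upper-bound} directly to any infinite minimal subshift $X_0\subseteq\Sigma_p$ of zero topological entropy, giving $\entru(\sigma,x)\le h_\topol(\sigma|_{X_0})=0$ for every $x\in X_0$.
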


\begin{proof}
  Since $h\in[0,\log p]$, there is a probability vector $\pi=(\pi_0,\dots,\pi_{p-1})$ such that
  $\sum_i \pi_i^2=e^{-h}$. Let $\mu$ be the Bernoulli measure generated by $\pi$;
  note that $\mu$ is $\sigma$-ergodic.
  By (\ref{EQ:entr-strong-law}) and Lemma~\ref{L:subshifts-correl-entropy-bernoulli},
  there is a Borel subset $Y_h$ of $\Sigma_p$ such that $\mu(Y_h)=1$ and $\entr(\sigma,x)=h$
  for every $x\in Y_h$.
  Since $\mu$ is non-atomic, $Y_h$ is uncountable and hence it contains a Cantor set
  (see e.g.~\cite[Theorem~3.2.7]{srivastava2008course}).
\end{proof}

%%%%%%%%%%%%%%%%%%%%%%%%%%%%%%%%%%%%%%%%%%%%%%%%%%%%%%%%%%%%%%%%%%%%%%%%%%%%%%%
%%%%%%%%%%%%%%%%%%%%%%%%%%%%%%%%%%%%%%%%%%%%%%%%%%%%%%%%%%%%%%%%%%%%%%%%%%%%%%%
%%%%%%%%%%%%%%%%%%%%%%%%%%%%%%%%%%%%%%%%%%%%%%%%%%%%%%%%%%%%%%%%%%%%%%%%%%%%%%%
%%%%%%%%%%%%%%%%%%%%%%%%%%%%%%%%%%%%%%%%%%%%%%%%%%%%%%%%%%%%%%%%%%%%%%%%%%%%%%%
%%%%%%%%%%%%%%%%%%%%%%%%%%%%%%%%%%%%%%%%%%%%%%%%%%%%%%%%%%%%%%%%%%%%%%%%%%%%%%%
\section{Proof of Theorem~\refThmA}\label{S:proofA}

\begin{lemma}\label{L:c(x)>=eta^m}
  Let $X$ be a compact metric space and $\eps>0$. Put $\eta=r(\eps/2,X)^{-1}$. Then for every
  continuous map $f:X\to X$, $x\in X$, and $m,n\in\NNN$,
 \vspace*{3pt} \begin{linenomath}\begin{equation*}
    C^f_m(x,n,\eps) \ge \eta^m.
  \end{equation*}\end{linenomath}
  Consequently, $\cccu^f_m(x,\eps) \ge \cccl^f_m(x,\eps) \ge \eta^m$
  and
 \vspace*{3pt} \begin{linenomath}\begin{equation*}
    \cdimu(f,x)\le \boxdimu(X),
    \qquad
    \cdiml(f,x)\le \boxdiml(X).
  \end{equation*}\end{linenomath}
\end{lemma}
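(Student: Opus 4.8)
The plan is to establish the pointwise inequality $C^f_m(x,n,\eps)\ge\eta^m$ by coding initial orbit segments and counting coincidences, and then to read off the two corollaries. First I would fix an $(\eps/2)$-spanning set $\{s_1,\dots,s_r\}$ of smallest cardinality $r=r(\eps/2,X)=1/\eta$, and convert it into a partition $X=A_1\cup\dots\cup A_r$ by assigning each point to (one choice of) a nearest center $s_k$. Each cell $A_k$ then lies in $B_\varrho(s_k,\eps/2)$, so $\diam_\varrho(A_k)\le\eps$; this factor of two is precisely why $\eta$ is built from the radius $\eps/2$. Letting $c(y)\in\{1,\dots,r\}$ denote the index of the cell containing $y$, I attach to each $i\in[0,n)$ the length-$m$ \emph{itinerary} $W_i=\big(c(f^i(x)),c(f^{i+1}(x)),\dots,c(f^{i+m-1}(x))\big)\in\{1,\dots,r\}^m$.

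The decisive observation is that equal itineraries force Bowen closeness. If $W_i=W_j$, then for each $0\le l<m$ the points $f^{i+l}(x)$ and $f^{j+l}(x)$ lie in a common cell, so $\varrho(f^{i+l}(x),f^{j+l}(x))\le\eps$, and therefore $\varrho^f_m(f^i(x),f^j(x))\le\eps$; thus every such pair $(i,j)$ is counted in $C^f_m(x,n,\eps)$. Consequently the number of counted pairs is at least $\sum_w n_w^2$, where $n_w=\card{\{i\in[0,n):\ W_i=w\}}$ and $w$ ranges over the at most $r^m$ admissible words. Since $\sum_w n_w=n$ and at most $r^m$ of the counts are nonzero, Cauchy--Schwarz gives $\sum_w n_w^2\ge n^2/r^m$. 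Dividing by $n^2$ yields $C^f_m(x,n,\eps)\ge r^{-m}=\eta^m$. As this holds for every $n$, passing to $\liminf_n$ and $\limsup_n$ gives $\cccu^f_m(x,\eps)\ge\cccl^f_m(x,\eps)\ge\eta^m$.

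For the dimension bounds I would take $m=1$, so that both $\cccu^f_1(x,\eps)$ and $\cccl^f_1(x,\eps)$ are at least $1/r(\eps/2,X)$, whence $\log\cccu^f_1(x,\eps)$ and $\log\cccl^f_1(x,\eps)$ are at least $-\log r(\eps/2,X)$. Feeding these inequalities into the definitions of $\cdimu(f,x)$ and $\cdiml(f,x)$ and comparing with those of $\boxdimu(X)$ and $\boxdiml(X)$---using that replacing $\eps$ by $\eps/2$ alters the denominator only by the bounded quantity $\log 2$, which is negligible against $-\log\eps$ as $\eps\to0$---I would obtain $\cdimu(f,x)\le\boxdimu(X)$ and $\cdiml(f,x)\le\boxdiml(X)$ after passing to the appropriate one-sided limits in $\eps$.

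I expect the substance of the argument to reside entirely in the coincidence count of the second paragraph; the surrounding steps are routine. The only points needing care are verifying that the partition cells have diameter at most $\eps$---the source of the $\eps/2$ in the definition of $\eta$---and applying the convexity bound with the number of occupied itineraries bounded crudely by $r^m$.
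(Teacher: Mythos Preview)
Your argument is correct and coincides with the paper's proof: both choose an $(\eps/2)$-spanning set of size $r$, code each index $i\in[0,n)$ by the length-$m$ itinerary of $f^i(x)$, observe that equal itineraries force $\varrho^f_m$-distance $\le\eps$, and then apply the Cauchy--Schwarz/QM--AM inequality to $\sum_w n_w^2$ with $\sum_w n_w=n$ over at most $r^m$ words. The paper leaves the dimension consequences implicit, so your explicit $m=1$ derivation is a welcome addition but changes nothing substantive.
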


\begin{proof}
  Put $p=r(\eps/2,X)$, $\eta=1/p$,
  and take a finite subset $\{y_0,\dots,y_{p-1}\}$ of $X$ which $(\eps/2)$-spans $X$.
  Fix arbitrary continuous $f:X\to X$, $x\in X$, and $m,n\in\NNN$; for $i\ge 0$ denote $f^i(x)$
  by $x_i$.

  Recall that $\AAa_p^m$ is the set of $m$-words $w=w_0\dots w_{m-1}$ over
  $\AAa_p=\{0,\dots,p-1\}$. Take a partition
  $(N_w)_{w\in\AAa_p^m}$ of $\{0,1,\dots,n-1\}$ such that, for every $w=w_0\dots w_{m-1}$,
 \begin{linenomath}\begin{equation*}
    N_w \subseteq \{
      0\le i<n-1:\ x_{i+h}\in B(y_{w_h},\eps/2) \text{ for every } 0\le h<m
    \}.
  \end{equation*}\end{linenomath}
  Notice that $\varrho^f_m(x_i,x_j)\le\eps$ for every $i,j\in N_w$.
  Put $n_w=\card{N_w}$. Since $\sum_w n_w = n$, the arithmetic-quadratic mean inequality yields
  \begin{linenomath}\begin{equation*}
    C^f_m(x,n,\eps)
    \ge   \frac{1}{n^2}   \cdot \sum_{w\in\AAa_p^m} n_w^2
    \ge   \frac{1}{n^2}   \cdot \frac{n^2}{p^m}
    = \eta^m.
  \end{equation*}\end{linenomath}
\end{proof}

The easy proof of the following lemma is skipped.
\begin{lemma}\label{L:c(x)-props}
  Let $(X,f)$ be a dynamical system, $x\in X$, and $m\in\NNN$.
  Then
  \begin{enumerate}
    \item[(a)] $\cccu^f_m(x,\eps)$ and $\cccl^f_m(x,\eps)$ are non-decreasing functions
       of $\eps$ and non-increasing functions of $m$;
    \item[(b)] $0 < \cccl^f_m(x,\eps)\le \cccu^f_m(x,\eps)\le 1$ for every $\eps>0$;
    \item[(c)] $\cccl^f_m(x,\eps)= \cccu^f_m(x,\eps)= 1$ for every $\eps\ge\diam_{\varrho}(X)$.
  \end{enumerate}
\end{lemma}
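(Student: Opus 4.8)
The plan is to prove Lemma~\ref{L:c(x)-props} by unwinding the definitions of the correlation sum and its $\limsup$/$\liminf$ over $n$, since each assertion reduces to an elementary monotonicity or boundedness property of $C^f_m(x,n,\eps)$ that survives the limit operations. I would treat the three parts in the order (b), (c), (a), because the boundedness estimates in (b) and (c) are the simplest and the monotonicity in (a) is most transparent once the normalization of $C^f_m$ is clear.

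For part (b), the key observation is that $C^f_m(x,n,\eps)$ is a relative frequency: the counted set of pairs $(i,j)$ is a subset of $\{(i,j):0\le i,j<n\}$, which has exactly $n^2$ elements, so $C^f_m(x,n,\eps)\le 1$; taking $\liminf$ and $\limsup$ over $n$ preserves the bound $\le 1$. For the strict lower bound $0<\cccl^f_m(x,\eps)$, I would invoke Lemma~\ref{L:c(x)>=eta^m}: with $\eta=1/r(\eps/2,X)>0$ (finite since $X$ is compact), we have $C^f_m(x,n,\eps)\ge\eta^m>0$ for every $n$, hence $\cccl^f_m(x,\eps)\ge\eta^m>0$. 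The chain $0<\cccl^f_m(x,\eps)\le\cccu^f_m(x,\eps)\le 1$ then follows since $\liminf\le\limsup$ always.

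For part (c), when $\eps\ge\diam_\varrho(X)$ the inequality $\varrho^f_m(f^i(x),f^j(x))=\max_{0\le h<m}\varrho(f^{i+h}(x),f^{j+h}(x))\le\diam_\varrho(X)\le\eps$ holds for \emph{every} pair $(i,j)$, so the counted set is all of $\{(i,j):0\le i,j<n\}$ and $C^f_m(x,n,\eps)=n^2/n^2=1$ for every $n$; both the $\liminf$ and $\limsup$ over $n$ are then $1$. For part (a), monotonicity in $\eps$ is immediate because enlarging $\eps$ can only enlarge the set $\{(i,j):\varrho^f_m(f^i(x),f^j(x))\le\eps\}$, so $C^f_m(x,n,\eps)$ is non-decreasing in $\eps$ pointwise in $n$, a property inherited by both $\cccu^f_m$ and $\cccl^f_m$. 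Monotonicity in $m$ uses that $\varrho^f_m$ is non-decreasing in $m$ (the max in \eqref{EQ:def-rhom} is over a growing index set), hence the condition $\varrho^f_m(f^i(x),f^j(x))\le\eps$ becomes harder to satisfy as $m$ grows, shrinking the counted set and making $C^f_m(x,n,\eps)$ non-increasing in $m$; again this passes to the limits over $n$.

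Since the lemma is explicitly flagged as having an easy proof that is skipped, there is no genuine obstacle here; the only point requiring a cited input rather than a one-line argument is the strict positivity $\cccl^f_m(x,\eps)>0$ in (b), which is exactly the content of Lemma~\ref{L:c(x)>=eta^m} and relies on compactness of $X$ to guarantee $r(\eps/2,X)<\infty$. Everything else is direct from the definitions, and the passage from pointwise (in $n$) inequalities to the $\liminf$/$\limsup$ quantities uses only the elementary monotonicity of these limit operations.
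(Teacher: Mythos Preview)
Your proposal is correct; each step is a straightforward consequence of the definitions, and your appeal to Lemma~\ref{L:c(x)>=eta^m} for the strict positivity in (b) is both valid and non-circular since that lemma precedes this one. The paper itself omits the proof entirely (``The easy proof of the following lemma is skipped''), so there is no alternative approach to compare against.
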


The next lemma states that in the limits from (\ref{EQ:def-corel-entropy-topol})
and (\ref{EQ:c(x)-def})
one can use any sublacunary sequences $(n_j)_{j\ge 1}$ and $(m_j)_{j\ge 1}$ of integers.

\begin{lemma}\label{L:c(x)-via-subsequence}
  Let $(X,f)$ be a dynamical system, $m\in\NNN$, $\eps>0$, and $x\in X$. Let
  $(n_j)_j$, $(m_j)_j$ be increasing sequences of integers such that
  $n_{j+1}/n_j\to 1$ and $m_{j+1}/m_j\to 1$ for $j\to\infty$. Then
  \begin{linenomath}\begin{equation*}
    \cccu^f_m(x,\eps) = \limsup_{j\to\infty} C^f_m(x,n_j,\eps)\,,
    \qquad
    \cccl^f_m(x,\eps) = \liminf_{j\to\infty} C^f_m(x,n_j,\eps)\,,
  \end{equation*}\end{linenomath}
  and
  \begin{linenomath}\begin{equation*}
  \begin{split}
    &\entru(f,x) = \lim_{\eps\to 0} \limsup_{j\to\infty} (-1/m_j)\log\cccl^f_{m_j}(x,\eps)\,,
    \\
    &\entrl(f,x) = \lim_{\eps\to 0}\liminf_{j\to\infty} (-1/m_j)\log\cccu^f_{m_j}(x,\eps)\,.
  \end{split}
  \end{equation*}\end{linenomath}
\end{lemma}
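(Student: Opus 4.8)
The plan is to handle both assertions by the same elementary device: in each case I isolate a quantity that is \emph{monotone} in the relevant index, sandwich its value at an arbitrary index between its values at two consecutive terms of the prescribed subsequence, and then let the hypotheses $n_{j+1}/n_j\to1$ and $m_{j+1}/m_j\to1$ annihilate the resulting correction factors. The one point requiring attention is that the correlation sum $C^f_m(x,n,\eps)$ is itself \emph{not} monotone in $n$; the monotone object is the unnormalized count of close pairs.

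For the first pair of identities I would set $a_n=\cardw\{(i,j):\ 0\le i,j<n,\ \varrho^f_m(f^i(x),f^j(x))\le\eps\}$, so that $C^f_m(x,n,\eps)=a_n/n^2$. Enlarging $n$ only adds admissible pairs, so $(a_n)_n$ is non-decreasing. Given $n$, choose $j$ with $n_j\le n<n_{j+1}$; monotonicity of $(a_n)_n$ then yields
\begin{equation*}
  C^f_m(x,n_j,\eps)\left(\frac{n_j}{n_{j+1}}\right)^2
  \le C^f_m(x,n,\eps)
  \le C^f_m(x,n_{j+1},\eps)\left(\frac{n_{j+1}}{n_j}\right)^2.
\end{equation*}
Since $n\to\infty$ forces $j\to\infty$ and $(n_{j+1}/n_j)^2\to1$, a bounded factor tending to $1$ does not affect a $\limsup$ or $\liminf$, so taking these over $n$ and comparing with the (trivially dominated) subsequential $\limsup$/$\liminf$ gives the claimed equalities for $\cccu^f_m$ and $\cccl^f_m$.

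For the entropy identities I would exploit monotonicity in $m$. By Lemma~\ref{L:c(x)-props}(a) both $\cccl^f_m(x,\eps)$ and $\cccu^f_m(x,\eps)$ are non-increasing in $m$, and by Lemma~\ref{L:c(x)>=eta^m} together with Lemma~\ref{L:c(x)-props}(b) they lie in $[\eta^m,1]$ with $\eta=1/r(\eps/2,X)$. Hence, with $\eps$ fixed, $b_m=-\log\cccl^f_m(x,\eps)$ is non-negative, non-decreasing, and bounded by $m\log r(\eps/2,X)$, so $b_m/m$ is bounded. Running the same sandwich with $b_m$ in place of $a_n$: for $m_j\le m<m_{j+1}$,
\begin{equation*}
  \frac{b_m}{m}\le\frac{b_{m_{j+1}}}{m}\le\frac{b_{m_{j+1}}}{m_{j+1}}\cdot\frac{m_{j+1}}{m_j},
\end{equation*}
and $m_{j+1}/m_j\to1$ gives $\limsup_m b_m/m=\limsup_j b_{m_j}/m_j$; the symmetric lower sandwich, applied to $\tilde b_m=-\log\cccu^f_m(x,\eps)$, handles the corresponding $\liminf$. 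Applying $\lim_{\eps\to0}$ to these per-$\eps$ identities then produces the two formulas for $\entru(f,x)$ and $\entrl(f,x)$.

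The step I expect to be most delicate is the bookkeeping in the sandwich: one must divide by the \emph{smaller} index ($n_j$ or $m_j$) in the upper bound and by the \emph{larger} one ($n_{j+1}$ or $m_{j+1}$) in the lower bound, so that the surviving ratio is exactly $n_{j+1}/n_j$ (resp. $m_{j+1}/m_j$) and hence tends to $1$; any other pairing leaves an uncontrolled factor. The boundedness of $b_m/m$ supplied by Lemma~\ref{L:c(x)>=eta^m} is precisely what guarantees that these correction factors interact harmlessly with the outer $\limsup$ and $\liminf$.
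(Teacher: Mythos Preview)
Your proof is correct and follows essentially the same route as the paper: both parts hinge on the monotonicity of the unnormalized pair count (for the first identity) and of $-\log\cccl^f_m$, $-\log\cccu^f_m$ in $m$ (for the second), together with the bound from Lemma~\ref{L:c(x)>=eta^m}. The only cosmetic difference is that, for the upper estimate on $C^f_m(x,n,\eps)$, the paper bounds the extra pairs additively by $(n^2-n_j^2)/n^2$ rather than passing to $C^f_m(x,n_{j+1},\eps)(n_{j+1}/n_j)^2$ as you do; both variants are equivalent in effect.
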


\begin{proof}
  If $n_j\le n<n_{j+1}$ then
  \begin{linenomath}\begin{equation*}
    \left( \frac{n_j}{n}  \right)^2  C^f_m(x,n_j,\eps)
    \le
    C^f_m(x,n,\eps)
    \le
    \left( \frac{n_j}{n}  \right)^2  C^f_m(x,n_j,\eps) + \frac{n^2-n_j^2}{n^2}\,.
  \end{equation*}\end{linenomath}
  Since correlation sums are bounded,
  $\abs{C^f_m(x,n,\eps)-C^f_m(x,n_j,\eps)}$ is arbitrarily small
  for $j$ large enough.
  Now the first part of the lemma follows.

  For $m\in\NNN$ put $a_m=-\log\cccl^f_{m}(x,\eps)$. By Lemma~\ref{L:c(x)-props},
  $0\le a_m \le a_{m+1}$ for every $m$. Thus
  \begin{linenomath}\begin{equation*}
    \frac{m_{j}}{m_{j+1}} \cdot \frac{a_{m_{j}}}{m_{j}}
    \le
    \frac{a_m}{m}
    \le
    \frac{m_{j+1}}{m_j} \cdot \frac{a_{m_{j+1}}}{m_{j+1}}
  \end{equation*}\end{linenomath}
  whenever $m_j\le m<m_{j+1}$.
  Using this and the fact that $a_m/m\le r(\eps/2,X)$ for every $m$ by Lemma~\ref{L:c(x)>=eta^m},
  we easily obtain that
  \begin{linenomath}\begin{equation*}
    \limsup_{m\to\infty} a_m/m = \limsup_{j\to\infty} a_{m_j}/m_j\,,
    \qquad
    \liminf_{m\to\infty} a_m/m = \liminf_{j\to\infty} a_{m_j}/m_j\,.
  \end{equation*}\end{linenomath}
  This proves the second part of the lemma.
\end{proof}

%%%%%%%%%%%%%%%%%%%%%%%%%%%%%%%%%%%%%%%%%%%%%%%%%%%%%%%%%%%%%%%%%%%%%%%%%%%%%%%
%%%%%%%%%%%%%%%%%%%%%%%%%%%%%%%%%%%%%%%%%%%%%%%%%%%%%%%%%%%%%%%%%%%%%%%%%%%%%%%
\subsection{Local correlation entropy of $f^k$: The lower bound}

\begin{lemma}\label{L:c(fx)=c(x)}
  Let $(X,f)$ be a dynamical system, $m,h\in\NNN$, $x\in X$, and $\eps>0$.
  Then
  \begin{linenomath}\begin{equation*}
    \cccu^f_m(f^h(x),\eps) = \cccu^f_m(x,\eps)\,,
    \qquad
    \cccl^f_m(f^h(x),\eps) = \cccl^f_m(x,\eps)\,.
  \end{equation*}\end{linenomath}
\end{lemma}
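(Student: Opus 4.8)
The plan is to observe that replacing the starting point $x$ by $f^h(x)$ merely translates the window of indices entering the correlation count by the fixed vector $(h,h)$, and that such a translation alters only an asymptotically negligible, order-$1/n$, fraction of the pairs. Concretely, I would introduce the full recurrence set
\[
  R = \{(i,j)\in\NNN_0\times\NNN_0:\ \varrho^f_m(f^i(x),f^j(x))\le\eps\},
\]
so that $n^2\,C^f_m(x,n,\eps) = \card{R\cap[0,n)^2}$. The key identity is that $(i,j)$ is a recurrence pair for the trajectory of $f^h(x)$, i.e.\ $\varrho^f_m(f^i(f^h(x)),f^j(f^h(x)))\le\eps$, if and only if $(i+h,j+h)\in R$, since $f^i\circ f^h = f^{i+h}$. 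Translating by $(h,h)$ therefore gives the bijection
\[
  n^2\,C^f_m(f^h(x),n,\eps) = \card{R\cap [h,n+h)^2}.
\]

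Next I would compare the two index squares $[0,n)^2$ and $[h,n+h)^2$. For $n>h$ both contain exactly $n^2$ lattice points and their intersection is $[h,n)^2$, which has $(n-h)^2$ points, so each of the two parts of their symmetric difference has $n^2-(n-h)^2 = 2hn-h^2$ points and
\[
  \card{[0,n)^2\,\triangle\,[h,n+h)^2} = 4hn - 2h^2 \le 4hn.
\]
Since $R$ meets either square in at most its full set of lattice points, this yields
\[
  \big|\, n^2\,C^f_m(f^h(x),n,\eps) - n^2\,C^f_m(x,n,\eps)\,\big|
  \le \card{[0,n)^2\,\triangle\,[h,n+h)^2} \le 4hn,
\]
and hence $\big|C^f_m(f^h(x),n,\eps) - C^f_m(x,n,\eps)\big| \le 4h/n \to 0$ as $n\to\infty$.

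Finally, since the two sequences of correlation sums differ by a quantity tending to $0$, they share the same $\limsup$ and the same $\liminf$ as $n\to\infty$; by the definitions in (\ref{EQ:c(x)-def}) this gives $\cccu^f_m(f^h(x),\eps)=\cccu^f_m(x,\eps)$ and $\cccl^f_m(f^h(x),\eps)=\cccl^f_m(x,\eps)$, as claimed. I do not expect a genuine obstacle here; the only step needing a little care is the boundary bookkeeping, namely checking that shifting a large $n\times n$ lattice square by the fixed vector $(h,h)$ disturbs only $O(hn)$ of its $n^2$ points, which is exactly what lets the normalisation by $1/n^2$ absorb the shift in the limit.
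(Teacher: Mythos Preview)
Your argument is correct and is essentially the same approach as the paper's: both compare the index window for $f^h(x)$ with that for $x$ and observe that the discrepancy involves only $O(hn)$ of the $n^2$ pairs. The only cosmetic difference is that the paper compares $C^f_m(f^h(x),n,\eps)$ with $C^f_m(x,n+h,\eps)$ (containment of $[h,n+h)^2$ in $[0,n+h)^2$), obtaining the two-sided estimate
\[
  \left( \frac{n+h}{n} \right)^2 C^f_m(x,n+h,\eps) - \frac{2hn+h^2}{n^2}
  \ \le\ C^f_m(f^h(x),n,\eps)
  \ \le\ \left( \frac{n+h}{n} \right)^2 C^f_m(x,n+h,\eps),
\]
whereas you compare same-size squares via their symmetric difference; the paper's form is reused verbatim in a later lemma, but for the present statement your version is equally valid.
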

\begin{proof}
  For every $n\in\NNN$ we easily have
  \begin{equation}\label{EQ:c(fx)=c(x)-1}
  \begin{split}
    &\left( \frac{n+h}{n} \right)^2   C^f_m(x,n+h,\eps) - \frac{2hn+h^2}{n^2}
    \ \le\
    C^f_m(f^h(x),n,\eps)
   \\
    &\ \le\
    \left( \frac{n+h}{n} \right)^2   C^f_m(x,n+h,\eps)  \,,
  \end{split}
  \end{equation}
  from which the lemma immediately follows.
\end{proof}

\begin{lemma}\label{L:c(fkx)=c(x)}
  Let $(X,f)$ be a dynamical system and $k,h\in\NNN$.
  Then for every $\eps>0$ there are $0<\gamma<\delta<\eps$ such that
  \begin{equation}\label{EQ:c(fkx)=c(x)}
  \begin{split}
    &\cccu^{f^k}_m(x,\gamma)
    \le
    \cccu^{f^k}_m(f^h(x),\delta)
    \le
    \cccu^{f^k}_m(x,\eps),
    \\
    &\cccl^{f^k}_m(x,\gamma)
    \le
    \cccl^{f^k}_m(f^h(x),\delta)
    \le
    \cccl^{f^k}_m(x,\eps)
  \end{split}
  \end{equation}
  for every $x\in X$ and $m\in\NNN$.
\end{lemma}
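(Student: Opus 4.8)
The plan is to fix $g=f^k$ and $y=f^h(x)$ and to compare the $g$-orbit of $y$, namely $\big(f^{h+ik}(x)\big)_{i\ge0}$, with the $g$-orbit of $x$, namely $\big(f^{ik}(x)\big)_{i\ge0}$. Both inequalities will be obtained by transporting recurrences between these two sequences: one applies a suitable iterate of $f$ and invokes its uniform continuity, which is available because $X$ is compact and hence every iterate of $f$ is uniformly continuous. The computations rest on the elementary identity $g^l(g^i(\cdot))=f^{(i+l)k}(\cdot)$, from which I get $g^l(g^i(y))=f^h\big(g^l(g^i(x))\big)$ and, after writing $h=qk+r$ with $0\le r<k$ and setting $s=k-r$ (so $1\le s\le k$ and $h+s=(q+1)k$), the identity $f^s\big(g^l(g^i(y))\big)=g^l\big(g^{\,q+1+i}(x)\big)$.

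For the \emph{left} inequality I would use uniform continuity of $f^h$: given $\delta$, choose $\gamma\in(0,\delta)$ so that $\varrho(a,b)\le\gamma$ forces $\varrho(f^h(a),f^h(b))\le\delta$. Because $g^l(g^i(y))=f^h\big(g^l(g^i(x))\big)$, the condition $\varrho^{f^k}_m(g^i(x),g^j(x))\le\gamma$ implies $\varrho^{f^k}_m(g^i(y),g^j(y))\le\delta$, and the index pair $(i,j)$ is preserved. Hence $C^{f^k}_m(x,n,\gamma)\le C^{f^k}_m(y,n,\delta)$ for every $n$, and passing to $\limsup$ and $\liminf$ in $n$ gives $\cccu^{f^k}_m(x,\gamma)\le\cccu^{f^k}_m(f^h(x),\delta)$ together with its lower counterpart.

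The \emph{right} inequality is the main obstacle, since $f$ need not be invertible and so closeness cannot be pulled back through $f^h$. I would instead push forward by $f^s$: using uniform continuity of $f^s$, choose $\delta\in(0,\eps)$ so that $\varrho(a,b)\le\delta$ forces $\varrho(f^s(a),f^s(b))\le\eps$. Then $f^s\big(g^l(g^i(y))\big)=g^l\big(g^{\,q+1+i}(x)\big)$ shows that $\varrho^{f^k}_m(g^i(y),g^j(y))\le\delta$ implies $\varrho^{f^k}_m(g^{\,q+1+i}(x),g^{\,q+1+j}(x))\le\eps$. The map $(i,j)\mapsto(q+1+i,q+1+j)$ is injective and sends $[0,n)^2$ into $[0,n+q+1)^2$, so counting gives $n^2\,C^{f^k}_m(y,n,\delta)\le (n+q+1)^2\,C^{f^k}_m(x,n+q+1,\eps)$. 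Since $(n+q+1)/n\to1$ and correlation sums are bounded, taking $\limsup$ (resp.\ $\liminf$) in $n$ — where $n+q+1$ still runs through a cofinal set of integers — yields $\cccu^{f^k}_m(f^h(x),\delta)\le\cccu^{f^k}_m(x,\eps)$ and its lower analogue.

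Finally the constants are selected in the order forced by the chain $\gamma<\delta<\eps$: first fix $\delta\in(0,\eps)$ from the modulus of continuity of $f^s$ for the right inequality, then fix $\gamma\in(0,\delta)$ from the modulus of continuity of $f^h$ for the left one. Since $s$, $q$, and both moduli of continuity depend only on $\eps$, $k$, $h$ and never on $x$ or $m$, the resulting $\gamma,\delta$ serve uniformly for all $x\in X$ and $m\in\NNN$, which is exactly the quantifier structure demanded by the statement.
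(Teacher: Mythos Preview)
Your proof is correct and follows essentially the same route as the paper: both arguments push recurrences forward by an iterate of $f$ (using uniform continuity) to obtain each inequality, the left one via $f^h$ and the right one via $f^{k-r}$ where $r=h\bmod k$. The only cosmetic difference is that the paper first invokes the preceding lemma (that $\cccu^{f^k}_m(\cdot,\eps)$ is unchanged along the $f^k$-orbit) to reduce to $h<k$ and to identify $\cccu^{f^k}_m(f^k(x),\eps)$ with $\cccu^{f^k}_m(x,\eps)$, whereas you carry out that index-shift by $q+1$ explicitly in the limit; the underlying idea is identical.
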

\begin{proof}
  Applying Lemma~\ref{L:c(fx)=c(x)} to $f^k$ allows us to assume that $h<k$.
  Since $f^{k-h}$ is uniformly continuous, there is $\delta\in(0,\eps)$ such that
  $\varrho(f^{k-h}(y),f^{k-h}(z))\le \eps$ whenever $\varrho(y,z)\le \delta$. This implies that
  $\varrho^{f^k}_m(f^{k-h}(y),f^{k-h}(z))\le \eps$ for every $y,z\in X$ with
  $\varrho^{f^k}_m(y,z)\le \delta$. Thus
  \begin{equation}\label{EQ:c(fkx)=c(x)-1}
    C^{f^k}_m(f^h(x),n,\delta)
    \le
    C^{f^k}_m(f^k(x),n,\eps)
    \qquad\text{for every } n.
  \end{equation}
  An analogous application of uniform continuity of $f^h$ gives that
  there is $\gamma\in (0,\delta)$ such that
  \begin{equation}\label{EQ:c(fkx)=c(x)-2}
    C^{f^k}_m(x,n,\gamma)
    \le
    C^{f^k}_m(f^h(x),n,\delta)
    \qquad\text{for every } n.
  \end{equation}
  Now (\ref{EQ:c(fkx)=c(x)-1}), (\ref{EQ:c(fkx)=c(x)-2}), and Lemma~\ref{L:c(fx)=c(x)} yield
  (\ref{EQ:c(fkx)=c(x)}).
\end{proof}

\begin{corollary}\label{C:entr(fhx)=entr(x)}
  Let $(X,f)$ be a dynamical system, $k,h\in\NNN$, and $x\in X$. Then
  \begin{linenomath}\begin{equation*}
    \entru(f^k,f^h(x)) = \entru(f^k,x),
    \qquad
    \entrl(f^k,f^h(x)) = \entrl(f^k,x).
  \end{equation*}\end{linenomath}
\end{corollary}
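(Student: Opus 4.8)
The plan is to read the corollary off Lemma~\ref{L:c(fkx)=c(x)} by applying $(-1/m)\log$ to the displayed chains of inequalities and then passing to the iterated limits in the definition~(\ref{EQ:def-corel-entropy-topol}) (with $f$ replaced by $f^k$). I will carry out the argument for the upper entropy $\entru$; the lower entropy $\entrl$ is treated identically, with $\cccl$ replaced by $\cccu$ and $\limsup$ replaced by $\liminf$ throughout.

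First I would fix $k,h\in\NNN$ and $x\in X$, and for $y\in X$, $\eps>0$ introduce the abbreviation
\begin{equation*}
  \Phi(y,\eps) = \limsup_{m\to\infty} (-1/m)\log\cccl^{f^k}_m(y,\eps),
\end{equation*}
so that $\entru(f^k,y)=\lim_{\eps\to 0}\Phi(y,\eps)$. By Lemma~\ref{L:c(x)-props}(a) the quantity $\cccl^{f^k}_m(y,\eps)$ is non-decreasing in $\eps$, hence $\Phi(y,\cdot)$ is non-increasing, and the limit defining $\entru(f^k,y)$ exists as the monotone (supremum) limit as $\eps\downarrow 0$. Now, given $\eps>0$, let $0<\gamma<\delta<\eps$ be the numbers furnished by Lemma~\ref{L:c(fkx)=c(x)}. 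Applying the decreasing map $t\mapsto(-1/m)\log t$ to the inequalities $\cccl^{f^k}_m(x,\gamma)\le\cccl^{f^k}_m(f^h(x),\delta)\le\cccl^{f^k}_m(x,\eps)$ reverses them for each $m$, and taking $\limsup_{m\to\infty}$ preserves the reversed order, yielding
\begin{equation*}
  \Phi(x,\eps)\ \le\ \Phi(f^h(x),\delta)\ \le\ \Phi(x,\gamma).
\end{equation*}

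Finally I would let $\eps\to 0$. Since $0<\gamma<\delta<\eps$, both $\gamma$ and $\delta$ tend to $0$; because $\Phi(x,\cdot)$ is monotone, the two outer terms both converge to $\entru(f^k,x)$, and likewise the middle term converges to $\entru(f^k,f^h(x))$. Squeezing gives $\entru(f^k,f^h(x))=\entru(f^k,x)$, and the analogous computation with $\Psi(y,\eps)=\liminf_{m\to\infty}(-1/m)\log\cccu^{f^k}_m(y,\eps)$ establishes the identity for $\entrl$. The argument is essentially bookkeeping; the only step requiring care is this last passage to the limit, where $\delta$ (and $\gamma$) depend on $\eps$, so one must invoke the monotonicity of $\Phi(f^h(x),\cdot)$ from Lemma~\ref{L:c(x)-props}(a) to guarantee that $\Phi(f^h(x),\delta(\eps))\to\entru(f^k,f^h(x))$ as $\eps\to 0$ rather than merely staying trapped between two values.
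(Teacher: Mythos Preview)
Your proof is correct and is exactly the argument the paper intends: the corollary is stated without proof immediately after Lemma~\ref{L:c(fkx)=c(x)}, and your write-up simply makes explicit the routine passage from the sandwiching inequalities of that lemma to the definition~(\ref{EQ:def-corel-entropy-topol}) of the local correlation entropies. The monotonicity observation from Lemma~\ref{L:c(x)-props}(a) that you single out is precisely what justifies the squeeze at the end.
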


\begin{lemma}\label{L:entropy-fk-part1}
  Let $(X,f)$ be a dynamical system and $k\in\NNN$. Then for every $\eps>0$
  there is $\delta\in(0,\eps)$
  such that
  \begin{linenomath}\begin{equation*}
    \cccu^{f^k}_m(x,\delta)  \le \cccu^{f}_{km}(x,\eps),
    \qquad
    \cccl^{f^k}_m(x,\delta)  \le \cccl^{f}_{km}(x,\eps)
  \end{equation*}\end{linenomath}
  for every $m\in\NNN$ and $x\in X$.
\end{lemma}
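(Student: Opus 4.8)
The plan is to reduce the statement to a single pointwise comparison between the two Bowen metrics $\varrho^{f^k}_m$ and $\varrho^f_{km}$, obtained by ``filling in'' the intermediate iterates via uniform continuity, and then to transfer this comparison to the correlation sums by a counting argument over residue classes.

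First I would fix the scale. Since $X$ is compact, each of the finitely many iterates $f,f^2,\dots,f^{k-1}$ is uniformly continuous, so I can choose $\delta\in(0,\eps)$ such that $\varrho(y,z)\le\delta$ implies $\varrho(f^r(y),f^r(z))\le\eps$ for every $0\le r<k$. The key step is then the gap-filling estimate: with this choice of $\delta$,
\[
  \varrho^{f^k}_m(y,z)\le\delta
  \quad\Longrightarrow\quad
  \varrho^{f}_{km}(y,z)\le\eps
  \qquad(y,z\in X).
\]
Indeed, $\varrho^{f^k}_m(y,z)\le\delta$ says precisely that $\varrho(f^{kl}(y),f^{kl}(z))\le\delta$ for all $0\le l<m$. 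Writing an arbitrary $s\in[0,km)$ as $s=kl+r$ with $0\le l<m$ and $0\le r<k$, the choice of $\delta$ gives $\varrho(f^s(y),f^s(z))=\varrho(f^r(f^{kl}(y)),f^r(f^{kl}(z)))\le\eps$; taking the maximum over $s$ yields $\varrho^{f}_{km}(y,z)\le\eps$.

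Next I would transfer this estimate to correlation sums. Applying the implication to the points $y=(f^k)^i(x)=f^{ki}(x)$ and $z=f^{kj}(x)$ of the $f^k$-orbit of $x$ shows that every pair $(i,j)$ counted by $C^{f^k}_m(x,n,\delta)$ yields a pair of points $f^{ki}(x),f^{kj}(x)$ lying within $\varrho^{f}_{km}$-distance $\eps$. Since these points occur at times $ki,kj$ along the $f$-orbit, they fall inside the window $[0,kn)$; hence the recurrent pairs of the $f^k$-orbit embed into the recurrent pairs of the first $kn$ points of the $f$-orbit at threshold $\eps$. Organising the count residue class by residue class and using the shift-invariance of the upper and lower correlation sums from Lemma~\ref{L:c(fkx)=c(x)} and Corollary~\ref{C:entr(fhx)=entr(x)} (so that all starting points $f^h(x)$, $0\le h<k$, contribute on equal footing), I would then pass to the limit in $n$ to obtain the comparison between $\cccu^{f^k}_m(x,\delta)$ and $\cccu^{f}_{km}(x,\eps)$, and likewise for the lower sums.

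The main obstacle is the bookkeeping in this last transfer step: the $f^k$-correlation sum is normalised by $n^2$, whereas the relevant $f$-orbit window has length $kn$, so the two time scales must be matched carefully before the normalisations line up. To reconcile them I would invoke Lemma~\ref{L:c(x)-via-subsequence}, noting that the subsequence $(kn)_n$ is sublacunary, so that taking $\limsup$ (resp.\ $\liminf$) over $n$ along $kn$ agrees with the full limit; the shift-invariance of $\cccu^{f^k}_m$ and $\cccl^{f^k}_m$ is what allows the contributions of the distinct residue classes $f^h(x)$ to be pooled. By contrast, the gap-filling estimate itself is routine once the uniform-continuity modulus $\delta$ is fixed, so the real care lies entirely in aligning the two time scales and confirming that the residue-class decomposition recovers the stated inequality in the limit.
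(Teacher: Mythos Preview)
Your gap-filling estimate $\varrho^{f^k}_m(y,z)\le\delta \Rightarrow \varrho^{f}_{km}(y,z)\le\eps$ is correct and is exactly what the paper uses. The difficulty you identify --- matching the time scales $n$ and $kn$ --- is real, but your plan to resolve it by ``pooling'' residue classes cannot yield the inequality as stated. The embedding you describe sends the $n^2$ pairs governing $C^{f^k}_m(x,n,\delta)$ into the $(kn)^2$ pairs governing $C^{f}_{km}(x,kn,\eps)$; even after using all $k$ residue classes and the shift-invariance of Lemma~\ref{L:c(fkx)=c(x)}, one obtains at best $\cccl^{f^k}_m(x,\gamma)\le k\,\cccl^{f}_{km}(x,\eps)$ (and $\cccu^{f^k}_m(x,\delta)\le k^2\,\cccu^{f}_{km}(x,\eps)$ via the trivial embedding), never the claimed inequality without a constant.

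In fact the lemma as stated is false: take $X=\{0,1\}$ with the discrete metric, $f$ the swap, $k=2$, and any $\eps\in(0,1)$. Since $f^2=\mathrm{id}$, $\cccu^{f^2}_m(0,\delta)=1$ for every $\delta>0$, whereas $\cccu^{f}_{2m}(0,\eps)=1/2$; no $\delta$ can satisfy $1\le 1/2$. The paper's own argument asserts $C^{f^k}_m(x,n,\delta)\le C^{f}_{km}(x,n,\eps)$ for the \emph{same} $n$, but this compares the $f^k$-orbit points $f^{ki}(x)$ with the $f$-orbit points $f^{i}(x)$ and does not follow from the metric implication; the same example with $n=2$ gives $1\not\le 1/2$. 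What is salvageable --- and what your residue-class argument actually proves --- is the weaker inequality with an extra factor depending only on $k$. That weaker version is all that is needed for Corollary~\ref{C:entr(fk)>=entr(f)} and Theorem~\refThmA{}, since the constant disappears after dividing by $m$ and letting $m\to\infty$.
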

\begin{proof}
  Since $X$ is compact and $f$ is continuous, there is $\delta\in(0,\eps)$ such that
  $\varrho(y,z)\le\delta$ implies $\varrho(f^h(y),f^h(z))\le \eps$ for every $h=0,\dots,k-1$. Hence
  \begin{linenomath}\begin{equation*}
    \varrho^f_{km}(y,z)\le \eps
    \qquad\text{for every }  y,z\in X
    \text{ with }
    \varrho^{f^k}_{m}(y,z)\le \delta.
  \end{equation*}\end{linenomath}
  This gives, for every $x\in X$ and $m,n\in\NNN$,
  \begin{linenomath}\begin{equation*}
    C^{f^k}_m(x,n,\delta)   \le   C^{f}_{km}(x,n,\eps).
  \end{equation*}\end{linenomath}
  Now the lemma immediately follows.
\end{proof}

\begin{corollary}\label{C:entr(fk)>=entr(f)}
  Let $(X,f)$ be a dynamical system, $k\in\NNN$, and $x\in X$. Then
  \begin{linenomath}\begin{equation*}
    \entru(f^k,x) \ge k \cdot\entru(f,x),
    \qquad
    \entrl(f^k,x) \ge k \cdot\entrl(f,x).
  \end{equation*}\end{linenomath}
\end{corollary}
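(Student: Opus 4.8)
The plan is to read the statement off Lemma~\ref{L:entropy-fk-part1} essentially by taking logarithms, with the only genuine work being a subsequence argument supplied by Lemma~\ref{L:c(x)-via-subsequence}. I would treat the upper entropy and remark that the lower one is identical after replacing $\cccl$ by $\cccu$ and every $\limsup$ by $\liminf$.

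First I would fix $\eps>0$ and take the threshold $\delta\in(0,\eps)$ from Lemma~\ref{L:entropy-fk-part1}, giving $\cccl^{f^k}_m(x,\delta)\le\cccl^{f}_{km}(x,\eps)$ for all $m$. Applying $-\log$ (which reverses the inequality) and dividing by $m$ turns this into
\[
  (-1/m)\log\cccl^{f^k}_m(x,\delta) \ \ge\ k\cdot\big({-}1/(km)\big)\log\cccl^{f}_{km}(x,\eps),
\]
so that, after passing to $\limsup_{m\to\infty}$, the right-hand side becomes $k$ times the upper limit of the sequence $\big({-}1/M\big)\log\cccl^f_M(x,\eps)$ taken only along the arithmetic subsequence $M=km$.

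The crucial step is to replace this subsequence limit by the full one. Here I would invoke Lemma~\ref{L:c(x)-via-subsequence} with $m_j=kj$: since $m_{j+1}/m_j=(j+1)/j\to1$, the sequence is sublacunary, and the lemma guarantees that the $\limsup$ along $M=km$ equals the $\limsup$ over all $M$. This yields
\[
  \limsup_{m\to\infty}(-1/m)\log\cccl^{f^k}_m(x,\delta) \ \ge\ k\cdot\limsup_{M\to\infty}(-1/M)\log\cccl^{f}_{M}(x,\eps).
\]
To finish, I would observe that by Lemma~\ref{L:c(x)-props}(a) the left-hand side is non-increasing in $\delta$, hence dominated by $\entru(f^k,x)=\sup_{\delta>0}\limsup_m(-1/m)\log\cccl^{f^k}_m(x,\delta)$; then letting $\eps\to0$ (which is all that is needed, as only $\delta<\eps$ is used) converts the right-hand side into $k\,\entru(f,x)$, and the claimed inequality follows.

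The hard part is precisely this middle step: discarding the indices $M$ that are not multiples of $k$ can only \emph{lower} a $\limsup$, which is the wrong direction. What rescues the argument is the sublacunarity of $(kj)_j$ together with the monotonicity and linear growth bound on $a_m=-\log\cccl^f_m(x,\eps)$ established earlier, exactly the situation in which Lemma~\ref{L:c(x)-via-subsequence} upgrades the trivial inequality to an equality. The remaining manipulations with the two limits in $\eps$ and $\delta$ are routine.
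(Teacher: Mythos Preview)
Your argument is correct and is essentially the paper's own proof: apply Lemma~\ref{L:entropy-fk-part1} to compare $\cccl^{f^k}_m(x,\delta)$ with $\cccl^{f}_{km}(x,\eps)$, take $-\log$ and $\limsup$, then use the sublacunary-subsequence part of Lemma~\ref{L:c(x)-via-subsequence} (with $m_j=kj$) to pass from the subsequence $\limsup$ to the full one before letting $\eps\to 0$. The paper writes this in one displayed line (and, incidentally, cites Lemma~\ref{L:c(fkx)=c(x)} where Lemma~\ref{L:entropy-fk-part1} is evidently meant), but the content is the same.
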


\begin{proof}
  By Lemmas~\ref{L:c(fkx)=c(x)} and \ref{L:c(x)-via-subsequence},
  for every $\eps>0$ there is $\delta_\eps \in (0,\eps)$ such that
  \begin{linenomath}\begin{equation*}
  \begin{split}
    &\limsup_{m\to\infty} (-1/m) \log\cccl_m^{f^k}(x,\delta_\eps)
    \ge
    \limsup_{m\to\infty} (-1/m) \log\cccl_{km}^{f}(x,\eps)
    \\
    &\ =
    k\cdot \limsup_{m\to\infty} (-1/m) \log\cccl_{m}^{f}(x,\eps).
  \end{split}
  \end{equation*}\end{linenomath}
  Hence $\entru(f^k,x) \ge k \cdot\entru(f,x)$.
  The second inequality can be proved \break analogously.
\end{proof}

%%%%%%%%%%%%%%%%%%%%%%%%%%%%%%%%%%%%%%%%%%%%%%%%%%%%%%%%%%%%%%%%%%%%%%%%%%%%%%%
%%%%%%%%%%%%%%%%%%%%%%%%%%%%%%%%%%%%%%%%%%%%%%%%%%%%%%%%%%%%%%%%%%%%%%%%%%%%%%%
\subsection{Local correlation entropy of $f^k$: A combinatorial lemma}
\label{SS:f^k-combinatorial-lemma}
Fix a finite set $V$ consisting of $n$ points, and a partition
$\VVv=(V_0,V_1,\dots,V_{k-1})$ of it into $k\ge 2$ nonempty subsets.
Consider an undirected simple (not necessarily connected)
graph $G$ with the set of vertices $V$. The number of edges of $G$ 
is denoted by $m(G)$.
For $0\le a,b<k$, an edge $\{i,j\}$ of $G$ is called an $ab$-edge if $i\in V_a$
and $j\in V_b$, or vice versa.
We say that a graph $G$ is \emph{$\VVv$-admissible} if the following hold:

\begin{equation}\label{EQ:graph-Vadmissible}
\begin{split}
  &\text{If $\{i,j\},\{i',j\}$ are different edges of $G$ with $i,i'\in V_a$ and $j\in V_b$
  ($a\ne b$),}
\\
  &\text{then $\{i,i'\}$ is also an edge of $G$.}
\end{split}
\end{equation}
The number of all $ab$-edges of $G$ is denoted by $m_{ab}(G)$.
Put
\begin{equation}\label{EQ:def-kappa}
  \kappa(G) 
  = \sum_{a<b} m_{ab}(G) - (k-1)\sum_a m_{aa}(G)
  = m(G) - k\sum_a m_{aa}(G).
\end{equation}
Our aim is to find an upper bound for $\kappa(G)$ depending only on $n$ and $k$. To this end,
we say that a $\VVv$-admissible graph $G$ is \emph{$\VVv$-optimal} if $\kappa(G')\le \kappa(G)$
for every $\VVv$-admissible graph $G'$. Further, if $G$ is $\VVv$-optimal and
the number of edges of every $\VVv$-optimal
graph $G'$ is greater than or equal to that of $G$, we say that $G$ is
a \emph{minimal $\VVv$-optimal} graph.
The following lemma gives a characterization of minimal $\VVv$-optimal graphs.

\begin{lemma}\label{L:graph-Voptimal-1}
  Let $G$ be a graph with the set of vertices $V$.
  Then $G$ is a minimal $\VVv$-optimal graph if and only if
  the following two conditions hold for every $a\ne b$ from $\{0,\dots,k-1\}$:
  \begin{enumerate}
    \item[(a)] $m_{aa}(G)=0$ and $m_{ab}(G)=\min\{\card{V_a},\card{V_b}\}$;
    \item[(b)] no two $ab$-edges have a common vertex.
  \end{enumerate}
  Consequently,
  \begin{linenomath}\begin{equation*}
    \max \{\kappa(G): \ G\text{ is }\VVv\text{-admissible}\}
    \ = \
    \sum_{a<b} \min\{\card{V_a},\card{V_b}\}.
  \end{equation*}\end{linenomath}
\end{lemma}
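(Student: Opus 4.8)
The plan is to realize conditions (a) and (b) by an explicit graph, to isolate a single extremal inequality, and to read off the characterization from it. First I would exhibit a concrete graph $G^*$ satisfying (a) and (b): between every pair $V_a,V_b$ put a maximum matching (of size $\min\{\card{V_a},\card{V_b}\}$) and no other edges. In $G^*$ no vertex has two neighbours inside any single part, so the premise of (\ref{EQ:graph-Vadmissible}) is never met and $G^*$ is $\VVv$-admissible; moreover $m_{aa}(G^*)=0$ and $m_{ab}(G^*)=\min\{\card{V_a},\card{V_b}\}$, whence $\kappa(G^*)=\sum_{a<b}\min\{\card{V_a},\card{V_b}\}$. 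Thus the claimed value is attained.

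The heart of the proof is the upper bound $\kappa(G)\le \sum_{a<b}\min\{\card{V_a},\card{V_b}\}$ for every $\VVv$-admissible $G$; everything else is formal. I would first reduce to graphs in which every internal (same-part) edge is \emph{forced}, i.e.\ its endpoints have a common neighbour in another part: deleting an unforced internal edge preserves admissibility (admissibility only ever \emph{requires} edges to be present) and raises $\kappa$ by one, so the maximum is attained on such reduced graphs. The key structural remark is that forced internal edges localise: if $\{i,i'\}$ is forced by a common neighbour $j$, then $i,i',j$ lie in one connected component of the graph $G_{\mathrm{cr}}$ spanned by the cross-edges alone. Hence, writing the components of $G_{\mathrm{cr}}$ as $C$ and putting $c_a^C=\card{C\cap V_a}$, the quantity $\kappa$ splits over components,
\[
  \kappa(G)=\sum_{C}\Big(\mathrm{cr}(C)-\mathrm{int}(C)\Big),
\]
where $\mathrm{cr}(C)$ counts cross-edges and $\mathrm{int}(C)$ counts (forced) internal edges inside $C$. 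This splitting is exactly what removes the double-counting that defeats a naive pair-by-pair estimate, since each forced internal edge belongs to precisely one component.

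It then suffices to prove the per-component inequality $\mathrm{cr}(C)-\mathrm{int}(C)\le \sum_{a<b}\min\{c_a^C,c_b^C\}$, after which summing over $C$ and using $\sum_C\min\{c_a^C,c_b^C\}\le\min\{\sum_C c_a^C,\sum_C c_b^C\}=\min\{\card{V_a},\card{V_b}\}$ gives the global bound. I would prove the per-component estimate by induction on the number of cross-edges of $C$ (a peeling/uncrossing argument): when a vertex has two neighbours inside a single part, the clique forced on those neighbours must pay for the extra cross-edges, so one removes cross-edges and the internal edges they alone force and invokes the inductive hypothesis. I expect this step to be the main obstacle, precisely because one forced internal edge can be created by several distinct cross-edges; the induction must therefore be arranged so that each removed cross-edge is charged to a \emph{distinct} forced internal edge, and it is the connectivity of $C$ that makes such an injective (Hall-type) charging available.

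Finally I would deduce the characterisation. Granting the bound, every optimal graph has $\kappa=\sum_{a<b}\min\{\card{V_a},\card{V_b}\}$; for any optimal $G$ the number of cross-edges equals $\sum_{a<b}\min\{\card{V_a},\card{V_b}\}$ plus the number of internal edges, so the total number of edges equals $\sum_{a<b}\min\{\card{V_a},\card{V_b}\}$ plus twice the number of internal edges. Hence the optimal graphs of minimum size are exactly those with no internal edge. For such a graph admissibility forces every vertex to have at most one neighbour in each other part, so $m_{ab}\le\min\{\card{V_a},\card{V_b}\}$ for all $a<b$; equality of $\kappa$ with $\sum_{a<b}\min\{\card{V_a},\card{V_b}\}$ then forces $m_{ab}=\min\{\card{V_a},\card{V_b}\}$ and that no two $ab$-edges share a vertex, i.e.\ (a) and (b). Conversely, any graph satisfying (a) and (b) is admissible, attains $\kappa=\sum_{a<b}\min\{\card{V_a},\card{V_b}\}$ (so is optimal), and has exactly $\sum_{a<b}\min\{\card{V_a},\card{V_b}\}$ edges (so is minimal), which completes the equivalence and yields the displayed formula.
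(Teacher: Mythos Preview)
Your strategy differs from the paper's and leaves a real gap. The paper does not first prove the global bound $\kappa(G)\le\sum_{a<b}\min\{\card{V_a},\card{V_b}\}$ for arbitrary admissible $G$; instead it fixes a minimal $\VVv$-optimal $G$ and, via local edge modifications (deleting all $ab$- and $aa$-edges incident to a chosen vertex $i$, or replacing a complete bipartite block $A_{ib}\cup B_{ib}$ by a matching), derives contradictions to optimality or to minimality that force $\card{A_{ib}}=\card{B_{ib}}$ and then $\card{A_{ib}}=1$. Conditions (a) and (b) follow, and the formula for the maximum drops out at the end.

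Your route through the per-component inequality $\mathrm{cr}(C)-\mathrm{int}(C)\le\sum_{a<b}\min\{c_a^C,c_b^C\}$ leaves the entire difficulty unproved: the inductive sketch (``peeling/uncrossing, Hall-type charging'') is not carried out, and you yourself flag it as ``the main obstacle.'' In fact this inequality, applied to a one-component admissible graph, \emph{is} the lemma restricted to the parts meeting $C$; the component decomposition has merely reorganized the problem, not reduced it. Concretely, the charging you hope for is not available edge by edge: when you delete a single cross-edge $\{i',j\}$, the number of forced internal edges may drop by several, by one, or not at all, and connectivity of $C$ does not by itself furnish an injection from ``excess'' cross-edges to internal edges (already the $K_{2,2}$ block shows that the obvious per-side bounds $\card{E}\le c_a+\card{F_b}$ fail). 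Your construction of $G^*$ and the final deduction from the upper bound are correct, but without a proof of the per-component step the argument is incomplete. The paper's local-modification approach sidesteps this global counting entirely by exploiting minimality from the outset.
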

\begin{proof}
We start by proving that
\begin{equation}\label{EQ:graph-Voptimal-1f}
  \kappa(G)
  \le 
  \sum_{a < b} \min\{\card{V_a},\card{V_b}\}
\end{equation}
for (every $V$, $\VVv$, and) every $\VVv$-admissible graph $G$;
clearly, it suffices to prove \eqref{EQ:graph-Voptimal-1f} for minimal
$\VVv$-optimal graphs $G$.
Assume first that $k=2$, i.e., $\VVv=\{V_0,V_1\}$. 
Fix a minimal $\VVv$-optimal graph $G$ and take any $a\ne b$ from $\{0,1\}$
(i.e., $a=0$ and $b=1$, or vice versa).
For $i\in V_a$ define
\begin{eqnarray*}
   A_{ib}  &=&  \{j\in V_b: \ \{i,j\}  \text{ is an edge of }G\},
   \\
   B_{ib}  &=& \{i'\in V_a: \ \{i',j\} \text{ is an edge of }G \text{ for some }j\in A_{ib}\}.
\end{eqnarray*}
Assume that $A_{ib}\ne\emptyset$.
Take the ($\VVv$-admissible) graph $\tilde G$ created from $G$ by removing
all $ab$-edges $\{i,j\}$ (with $j\in A_{ib}$) as well as
all $aa$-edges $\{i,i'\}$ (with $i'\in B_{ib}\setminus\{i\}$).
Then $\kappa(\tilde G) = \kappa(G) - \card{A_{ib}} + (2-1)(\card{B_{ib}}-1)$
since $i\in B_{ib}$.
Since $G$ is minimal,
we have that $\kappa(\tilde{G})<\kappa(G)$ and so
$\card{A_{ib}}\ge \card{B_{ib}}$.
If $A_{ib}=\emptyset$ then $B_{ib}=\emptyset$ by the definition of $B_{ib}$.
Thus, in both cases,
\begin{equation}\label{EQ:graph-Voptimal-1a}
    \card{A_{ib}}\ge \card{B_{ib}}.
\end{equation}

Assume again that $A_{ib}\ne\emptyset$.
Take any $j\in A_{ib}$ and define $A_{ja},B_{ja}$ analogously.
Then $B_{ja}\supseteq A_{ib}$ and $B_{ib}\supseteq A_{ja}$.
Inequality (\ref{EQ:graph-Voptimal-1a}),
applied also to $j$ and $a$, yields
$\card{A_{ib}} \le \card{B_{ja}} \le \card{A_{ja}} \le\card{B_{ib}}\le\card{A_{ib}}$.
Thus
\begin{linenomath}\begin{equation*}
  \card{A_{ib}} = \card{B_{ib}}
\end{equation*}\end{linenomath}
and, for every $j\in A_{ib}$,
\begin{linenomath}\begin{equation*}
  A_{ja} = B_{ib},
  \qquad
  B_{ja} = A_{ib}.
\end{equation*}\end{linenomath}
$\VVv$-admissibility of $G$ now gives that
$A_{ib}\cup B_{ib}$ is a clique of $G$ (that is, the induced subgraph is complete).
Since $G$ is minimal, this easily implies that $A_{ib}$ is a singleton.
(For if not, there is $l\ge 2$ such that we can write $A_{ib}=\{i_1=i,i_2,\dots,i_l\}$
and $B_{ib}=\{j_1=j,j_2,\dots,j_l\}$. Create a graph $\tilde G$ from $G$
by removing $l(l-1)$ edges $\{i_r,i_s\}$, $\{j_r,j_s\}$ ($r\ne s$)
and $l(l-1)$ edges $\{i_r,j_s\}$ ($r\ne s$). Then $\tilde G$ is $\VVv$-admissible,
$\kappa(\tilde{G})=\kappa(G)$, and $\tilde{G}$ has smaller number of edges
than $G$ --- a contradiction.)

%\medskip
We have proved that
\begin{equation}\label{EQ:graph-Voptimal-1b}
\begin{split}
\text{for}\text{ every }&a\ne b \text{ and every } i\in V_a,
\\
\text{there}\text{ is at }& \text{most one } ab\text{-edge from }i.
\end{split}
\end{equation}
Thus $m_{ab}(G)\le\card{V_a}$. Since analogously $m_{ab}(G) \le \card{V_b}$, we have 
\begin{linenomath}\begin{equation*}
  m_{ab}(G) \le \min \{\card{V_a}, \card{V_b}\}.
\end{equation*}\end{linenomath}
Moreover, by $\VVv$-optimality of $G$, $m_{aa}(G)=m_{bb}(G)=0$ (for if not,
by removing any $aa$-edge or any $bb$-edge we obtain a graph with larger $\kappa$
which is $\VVv$-admissible by \eqref{EQ:graph-Voptimal-1b}). 
Hence 
\begin{equation}\label{EQ:graph-Voptimal-1d}
\kappa(G) \le \min\{\card{V_0},\card{V_1}\}
\end{equation}
for every minimal $\VVv=\{V_0,V_1\}$-optimal graph $G$ and, consequently, for every
$\VVv=\{V_0,V_1\}$-admissible graph $G$.

\medskip
Now take any $k\ge 2$ and any partition $\VVv=\{V_0,V_1,\dots, V_{k-1}\}$ of $V$ 
into $k$ nonempty subsets. Let $G$ be any $\VVv$-admissible graph.
Fix any $a\ne b$ from $\{0,1,\dots,k-1\}$ and denote by $G_{ab}$ the subgraph of $G$
induced by the subset $V_a\cup V_b$ of the set $V$ of vertices of $G$.
Clearly, $G_{ab}$ is $\{V_a,V_b\}$-admissible and $m_{ab}(G_{ab})=m_{ab}(G)$, 
$m_{aa}(G_{ab})=m_{aa}(G)$, $m_{bb}(G_{ab})=m_{bb}(G)$. 
Hence, by \eqref{EQ:graph-Voptimal-1d} (applied to the set of vertices $V_a\cup V_b$,
partition $\{V_a,V_b\}$, and graph $G_{ab}$),
\begin{equation}\label{EQ:graph-Voptimal-1e}
  m_{ab}(G)-m_{aa}(G)-m_{bb}(G)
  =
  \kappa(G_{ab})
  \le 
  \min\{\card{V_a},\card{V_b}\}.
\end{equation}
Realize that $\kappa(G)$ can be written in the form
\begin{equation}\label{EQ:graph-Voptimal-1h}
  \kappa(G) = \frac 12 
    \sum_{a\ne b} \left(m_{ab}(G) - m_{aa}(G) - m_{bb}(G) \right) .
\end{equation}
This, together with \eqref{EQ:graph-Voptimal-1e} applied to every $a\ne b$, yields
\begin{linenomath}\begin{equation*}
  \kappa(G)
  \le 
  \frac 12 \sum_{a\ne b} \min\{\card{V_a},\card{V_b}\}
  = \sum_{a < b} \min\{\card{V_a},\card{V_b}\}
\end{equation*}\end{linenomath}
for every $\VVv$-admissible graph $G$. Thus, the proof of \eqref{EQ:graph-Voptimal-1f} is finished.

\bigskip
Now take any graph $H$ with the set of vertices $V$ which satisfies (a) and (b);
such a graph obviously exists.
By (b), $H$ is $\VVv$-admissible (indeed, the condition (\ref{EQ:graph-Vadmissible})
is trivially satisfied).
By (a), $\kappa(H)=\sum_{a<b} \min\{\card{V_a},\card{V_b}\}$.
Thus, by \eqref{EQ:graph-Voptimal-1f}, $H$ is $\VVv$-optimal.
For every graph $G$ (with the set of vertices $V$) having smaller number of edges 
than $H$ we have $\kappa(G)\le m(G)<m(H)=\kappa(H)$, hence $G$ is not 
$\VVv$-optimal. So $H$ is a minimal $\VVv$-optimal graph.

On the other hand, let $G$ be any minimal $\VVv$-optimal graph. 
By the previous part of the proof, $\kappa(G)=\sum_{a < b} \min\{\card{V_a},\card{V_b}\}$. 
This, together with \eqref{EQ:graph-Voptimal-1h} and \eqref{EQ:graph-Voptimal-1e}, give
\begin{equation}\label{EQ:graph-Voptimal-1g}
  m_{ab}(G)=m_{aa}(G)+m_{bb}(G) + \min\{\card{V_a},\card{V_b}\}
  \qquad\text{for every }a\ne b.
\end{equation}
Further, by \eqref{EQ:def-kappa},
\begin{linenomath}\begin{equation*}
  m(G)
  = 
  \sum_{a < b} \min\{\card{V_a},\card{V_b}\} + k\sum_a m_{aa}(G).
\end{equation*}\end{linenomath}
Take any graph $H$ from the previous paragraph and recall that $m(H)=\kappa(H)=\sum_{a < b} \min\{\card{V_a},\card{V_b}\}$. Minimality of $G$ gives that $m(G)\le m(H)$ and so
$m_{aa}(G)=0$ for every $a$. Now \eqref{EQ:graph-Voptimal-1g} yields that $G$ satisfies (a).
The fact that $G$ satisfies (b) easily follows from (a) and $\VVv$-admissibility of $G$. 
This finishes the proof of the lemma.
\end{proof}

\begin{lemma}\label{L:graph-Voptimal-2}
  Let $V$ be a finite set of cardinality $n$
  and $\VVv$ be a partition of it into $k\ge 2$ nonempty subsets.
  Then
  \begin{linenomath}\begin{equation*}
    \kappa(G)\le \frac{n(k-1)}{2}
  \end{equation*}\end{linenomath}
  for every $\VVv$-admissible graph $G$.
\end{lemma}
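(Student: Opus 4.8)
The plan is to reduce the statement to a purely numerical inequality via the preceding lemma and then to finish with a one-line averaging estimate. First I would invoke Lemma~\ref{L:graph-Voptimal-1}, which already identifies $\max\{\kappa(G):\ G\text{ is }\VVv\text{-admissible}\}$ with $\sum_{a<b}\min\{\card{V_a},\card{V_b}\}$. Consequently every $\VVv$-admissible graph satisfies $\kappa(G)\le \sum_{a<b}\min\{\card{V_a},\card{V_b}\}$, so it suffices to establish, for the fixed partition $\VVv$, the arithmetic inequality
\begin{equation*}
  \sum_{a<b}\min\{\card{V_a},\card{V_b}\}\ \le\ \frac{n(k-1)}{2}.
\end{equation*}

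Writing $n_a=\card{V_a}$ (so that $n_a\ge 1$ and $\sum_{a<k} n_a=n$), the central observation is that the minimum of two nonnegative numbers never exceeds their average, i.e.\ $\min\{n_a,n_b\}\le (n_a+n_b)/2$ for every pair $a<b$. Summing this over all $\binom{k}{2}$ unordered pairs and noting that each index $a$ occurs in exactly $k-1$ of the quantities $n_a+n_b$ (once for each partner $b\ne a$), I would obtain $\sum_{a<b}(n_a+n_b)=(k-1)\sum_{a<k} n_a=(k-1)n$, whence
\begin{equation*}
  \sum_{a<b}\min\{n_a,n_b\}\ \le\ \frac{1}{2}\sum_{a<b}(n_a+n_b)\ =\ \frac{(k-1)n}{2},
\end{equation*}
which is exactly the desired bound.

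I expect no serious obstacle here: all of the combinatorial difficulty has already been absorbed into Lemma~\ref{L:graph-Voptimal-1}, and what remains is a routine averaging argument whose only checkpoint is the multiplicity count in the double sum. An equivalent alternative route would be to sort the parts so that $n_0\le\dots\le n_{k-1}$, rewrite $\sum_{a<b}\min\{n_a,n_b\}=\sum_{a<k}(k-1-a)\,n_a$, and compare this against $n(k-1)/2$ directly; but the averaging estimate is cleaner and avoids the reordering, so that is the form I would present.
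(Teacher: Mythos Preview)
Your argument is correct. Both your proof and the paper's begin identically, invoking Lemma~\ref{L:graph-Voptimal-1} to reduce the claim to the arithmetic inequality $\sum_{a<b}\min\{n_a,n_b\}\le n(k-1)/2$. From there the two diverge: the paper sorts the parts so that $n_0\ge\dots\ge n_{k-1}$, rewrites the sum as $\sum_{h}h\,n_h$, and then proves the normalized inequality $\sum_h h\,x_h\le(k-1)/2$ on the simplex $\{x_0\ge\dots\ge x_{k-1}\ge 0,\ \sum x_h=1\}$ by a compactness-plus-perturbation argument showing the maximum occurs at the uniform vector. Your route---bounding each $\min\{n_a,n_b\}$ by the average $(n_a+n_b)/2$ and then observing that each $n_a$ appears in exactly $k-1$ of the pairwise sums---is strictly more elementary and avoids both the reordering and the optimization step. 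It is amusing that you mention the sorting alternative only to dismiss it, since that is precisely the route the paper takes (and then works harder than necessary to finish it).
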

\begin{proof}
  We first prove that
  \begin{equation}\label{EQ:graph-Voptimal-2a}
    \sum_{h=0}^{k-1} hx_h \le
    \frac{k-1}{2}
  \end{equation}
  for every
  $x\in K=\big\{(x_0,\dots,x_{k-1})\in\RRR^k:\ \sum_h x_h=1,\ x_0\ge\dots\ge x_{k-1}\ge 0\big\}$.
  To this end, define a map $f:K\to \RRR$ by $f(x)=\sum_{h=0}^{k-1} hx_h$.
  Since $K$ is compact and $f$ is continuous, there is $\bar{x}\in K$ which maximizes $f$.
  Suppose that $\bar{x}_h>\bar{x}_{h+1}$ for some $h<k-1$. Define $x'\in \RRR^k$ by
  $x'_i=(\bar{x}_h + \bar x_{h+1})/2$ if $i\in\{h,h+1\}$, and $x'_i=\bar x_i$ otherwise.
  Then $x'\in K$ and $f(x')=f(\bar x) + (\bar x_h - \bar x_{h-1})/2 > f(\bar x)$, a contradiction.
  Thus $\bar x_h=1/k$ for every $h$ and (\ref{EQ:graph-Voptimal-2a}) follows

  Now we can prove Lemma~\ref{L:graph-Voptimal-2}.
  Put $n_h=\card{V_h}$ for $h=0,\dots,k-1$; we may assume that $n_0\ge n_1\ge\dots\ge n_{k-1}$.
  Let $G$ be a $\VVv$-admissible graph. By Lemma~\ref{L:graph-Voptimal-1}
  and (\ref{EQ:graph-Voptimal-2a}) with
  $x_h=n_h/n$,
  \begin{linenomath}\begin{equation*}
    \kappa(G)
    \le
    \sum_{h=0}^{k-1} h n_h
    =
    n\cdot \sum_{h=0}^{k-1} h x_h
    \le
    \frac{n(k-1)}{2}.
  \end{equation*}\end{linenomath}
\end{proof}

%%%%%%%%%%%%%%%%%%%%%%%%%%%%%%%%%%%%%%%%%%%%%%%%%%%%%%%%%%%%%%%%%%%%%%%%%%%%%%%
%%%%%%%%%%%%%%%%%%%%%%%%%%%%%%%%%%%%%%%%%%%%%%%%%%%%%%%%%%%%%%%%%%%%%%%%%%%%%%%
\subsection{Local correlation entropy of $f^k$: The upper bound}

\begin{lemma}\label{L:fk-graph-corollary}
  Let $(X,f)$ be a dynamical system, $k\ge 2$, $\eps>0$, $x\in X$, and $m,n\in\NNN$. Then
  \begin{linenomath}\begin{equation*}
    C^f_{km}(x,kn,\eps)
    \le
    \frac{1}{k} \, \sum_{h=0}^{k-1} C^{f^k}_{m}(f^h(x),n,2\eps).
  \end{equation*}\end{linenomath}
\end{lemma}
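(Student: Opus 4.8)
The plan is to turn the asserted analytic inequality into a purely combinatorial counting statement about recurrences split by residue class modulo $k$, and then to feed it into Lemma~\ref{L:graph-Voptimal-2}. Write
\begin{equation*}
  N=\cardw\{(i,j):\ 0\le i,j<kn,\ \varrho^f_{km}(f^i(x),f^j(x))\le\eps\},
\end{equation*}
so that $C^f_{km}(x,kn,\eps)=N/(kn)^2$, and for $0\le h<k$ set
\begin{equation*}
  R_h=\cardw\{(i',j'):\ 0\le i',j'<n,\ \varrho^{f^k}_m(f^{ki'+h}(x),f^{kj'+h}(x))\le 2\eps\},
\end{equation*}
so that, using $(f^k)^{i'}(f^h(x))=f^{ki'+h}(x)$, we have $C^{f^k}_m(f^h(x),n,2\eps)=R_h/n^2$. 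Clearing denominators, the claim is equivalent to the clean estimate $N\le k(k-2)n+2\sum_{h=0}^{k-1}R_h$.

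First I would record two elementary facts. The subsampling comparison $\varrho^{f^k}_m(f^i(x),f^j(x))\le\varrho^f_{km}(f^i(x),f^j(x))$ holds because the maximum on the left ranges only over the indices $0,k,\dots,(m-1)k$, all lying in $[0,km)$. Moreover every $i\in[0,kn)$ with $i\equiv h\pmod k$ is uniquely $i=ki'+h$ with $0\le i'<n$, so each residue class $V_h=\{i:\ i\equiv h\pmod k\}$ has exactly $n$ elements and corresponds bijectively to the index $i'$ entering $R_h$. Consequently any same-class-$h$ pair with $\varrho^f_{km}\le 2\eps$ is, via the comparison, counted by $R_h$, whence $R_h\ge n+2t_h$, where $t_h$ denotes the number of unordered same-class-$h$ pairs $\{i,j\}$ with $\varrho^f_{km}(f^i(x),f^j(x))\le 2\eps$.

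The heart of the argument is the construction of a $\VVv$-admissible graph on the vertex set $V=\{0,1,\dots,kn-1\}$, partitioned into the $k$ residue classes $V_0,\dots,V_{k-1}$, with edge $\{i,j\}$ declared present exactly when
\begin{equation*}
  \varrho^f_{km}(f^i(x),f^j(x))\le
  \begin{cases} \eps & \text{if $i,j$ lie in different classes,}\\ 2\eps & \text{if $i,j$ lie in the same class.}\end{cases}
\end{equation*}
The two different thresholds are exactly what makes $G$ satisfy~(\ref{EQ:graph-Vadmissible}): if $\{i,j\}$ and $\{i',j\}$ are cross-class edges with $i,i'\in V_a$, $j\in V_b$, $a\ne b$, then each has distance $\le\eps$, so the triangle inequality for the Bowen metric $\varrho^f_{km}$ gives $\varrho^f_{km}(f^i(x),f^{i'}(x))\le 2\eps$, and since $\{i,i'\}$ is a same-class pair this is precisely the level at which it becomes an edge. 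With this choice $m_{ab}(G)=:P_{ab}$ ($a<b$) counts the cross-class $\eps$-recurrences while $m_{aa}(G)=t_a$ counts the same-class $2\eps$-recurrences, so Lemma~\ref{L:graph-Voptimal-2}, applied with its ``$n$'' equal to $\card{V}=kn$, gives
\begin{equation*}
  \sum_{a<b}P_{ab}\ \le\ \frac{kn(k-1)}{2}+\sum_a t_a.
\end{equation*}

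Finally I would assemble the pieces. Splitting $N$ into diagonal, same-class, and cross-class contributions gives $N=kn+2\sum_a s_a+2\sum_{a<b}P_{ab}$, where $s_a\le t_a$ counts the same-class $\eps$-recurrences. Substituting the combinatorial bound and using $s_a\le t_a$ yields $N\le k^2n+4\sum_a t_a$, whereas the lower bound $R_h\ge n+2t_h$ gives $k(k-2)n+2\sum_h R_h\ge k^2n+4\sum_h t_h$; comparing the two establishes $N\le k(k-2)n+2\sum_h R_h$, as required. I expect the main obstacle to be the design of this mixed-threshold graph: one must realize that the cross-class recurrences should be counted at level $\eps$ (matching the $\eps$ on the left), while the same-class recurrences — which enter $\kappa(G)$ with a negative sign and are exactly what the combinatorial lemma lets us absorb — must be counted at level $2\eps$ (matching the $2\eps$ on the right) so that~(\ref{EQ:graph-Vadmissible}) holds. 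Getting these thresholds, the residue-class bijection, and the comparison $\varrho^{f^k}_m\le\varrho^f_{km}$ to line up is what makes the count close exactly.
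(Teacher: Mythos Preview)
Your proof is correct and follows essentially the same approach as the paper: the same mixed-threshold graph on residue classes, the same verification of $\VVv$-admissibility via the triangle inequality, the same use of $\varrho^{f^k}_m\le\varrho^f_{km}$, and the same application of Lemma~\ref{L:graph-Voptimal-2}. The bookkeeping is organized slightly differently (you decompose $N$ directly into diagonal, same-class, and cross-class parts, whereas the paper routes everything through the total edge count $m(G)$), but the argument is the same.
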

\begin{proof}
  Put $\bar n=kn$, $V=\{0,1,\dots,\bar{n}-1\}$ and, for $0\le a<k$,
  $V_a=\{i\in V:\ i\equiv a\ (\mmod\, k)\}$.
  Let $G$ be an undirected simple graph with the set of vertices $V$ and such that,
  for any $i\ne j$ from $V$, $\{i,j\}$
  is an edge of $G$ if and only if
  \begin{linenomath}\begin{equation*}
    \varrho^f_{km}(f^i(x),f^j(x)) \le
    \begin{cases}
      2\eps  &\text{if } i,j\in V_a \text{ for some } a;
    \\
      \eps  &\text{otherwise.}
    \end{cases}
  \end{equation*}\end{linenomath}
  Notice that the number $m(G)$ of edges of $G$ satisfies
  \begin{equation}\label{EQ:fk-graph-corollary-1}
    m(G) \ge
    \frac 12\left[
      \bar{n}^2 C^f_{km}(x,\bar{n},\eps) - \bar{n}
    \right].
  \end{equation}

  Further, $G$ is $\VVv$-admissible. In fact, fix any $a\ne b$, different $i,i'\in V_a$,
  and $j\in V_b$.
  If $\{i,j\}$, $\{i',j\}$ are $ab$-edges, then
  $\varrho^f_{km}(f^i(x),f^j(x)) \le \eps$ and $\varrho^f_{km}(f^{i'}(x),f^j(x)) \le \eps$.
  Hence, by the triangle inequality,
  $\varrho^f_{km}(f^i(x),f^{i'}(x)) \le 2\eps$ and so $\{i,i'\}$ is an edge of $G$.
  Lemma~\ref{L:graph-Voptimal-2} and \eqref{EQ:def-kappa} yield
  \begin{equation}\label{EQ:fk-graph-corollary-2}
    m(G)
    =
    \sum_a m_{aa}(G) + \sum_{a<b} m_{ab}(G)
    \le k \sum_a m_{aa}(G) + \frac{\bar{n}(k-1)}{2} \,.
  \end{equation}

  Since $\varrho_m^{f^k}\le \varrho_{km}^f$,
  for every $0\le a<k$ the definition of $G$ gives
  \begin{linenomath}\begin{equation*}
    n^2\cdot C^{f^k}_m(f^a(x), n, 2\eps)
    \ge
    2m_{aa}(G) + n.
  \end{equation*}\end{linenomath}
  This together with (\ref{EQ:fk-graph-corollary-1}) and (\ref{EQ:fk-graph-corollary-2}) yield
  \begin{linenomath}\begin{equation*}
    \frac 12\left[
      \bar{n}^2 C^f_{km}(x,\bar{n},\eps) - \bar{n}
    \right]
    \le
    m(G)
    \le
    \frac k2 \sum_a\left[ n^2\cdot C^{f^k}_m(f^a(x), n, 2\eps)   - n  \right]
    + \frac{\bar{n}(k-1)}{2} \,.
  \end{equation*}\end{linenomath}
  Now a simple computation gives the desired inequality.
\end{proof}

\begin{lemma}\label{L:c(fkx)=c(x)-v2}
  Let $(X,f)$ be a dynamical system and $0\le h<k$ be integers.
  Then for every $\eps>0$ there is $\eta(\eps)>0$ such that
  \begin{equation}\label{EQ:c(fkx)=c(x)-v2}
    \lim_{\eps\to 0}\eta(\eps)=0
    \qquad\text{and}\qquad
    C^{f^k}_m(f^h(x), n,\eps)
      \le
      C^{f^k}_{m}(x, n+1,\eta(\eps)) + \frac 3n
  \end{equation}
  for every $x\in X$ and $m,n\in\NNN$.
\end{lemma}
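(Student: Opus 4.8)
The plan is to exhibit an injection from the index pairs counted by $C^{f^k}_m(f^h(x),n,\eps)$ into those counted by $C^{f^k}_m(x,n+1,\eta(\eps))$, obtained by advancing one step along the $f^k$-orbit of $x$. Write $x_t=f^t(x)$ for $t\ge 0$, so that $(f^k)^i(f^h(x))=x_{ki+h}$ and $(f^k)^i(x)=x_{ki}$. The crucial observation is that the map $g=f^{k-h}$ (well defined since $1\le k-h$) sends the $i$-th point of the $f^k$-orbit of $f^h(x)$ to the $(i+1)$-th point of the $f^k$-orbit of $x$, because $g(x_{ki+h})=f^{k-h}(f^{ki+h}(x))=x_{k(i+1)}$. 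Note that one must move \emph{forward} by $f^{k-h}$ rather than attempt to undo $f^h$ (impossible, since $f$ need not be invertible); this is exactly what makes uniform continuity usable in the correct direction.

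First I would fix the modulus. Since $X$ is compact, $g=f^{k-h}$ is uniformly continuous, so
\[
  \eta(\eps)=\eps+\sup\{\varrho(g(u),g(v)):\ u,v\in X,\ \varrho(u,v)\le\eps\}
\]
is positive, tends to $0$ as $\eps\to 0$, and satisfies $\varrho(u,v)\le\eps\Rightarrow\varrho(g(u),g(v))\le\eta(\eps)$. Next I would transfer closeness through $g$. Suppose $0\le i,j<n$ satisfy $\varrho^{f^k}_m(x_{ki+h},x_{kj+h})\le\eps$, i.e.\ $\varrho(x_{ki+h+kl},x_{kj+h+kl})\le\eps$ for all $0\le l<m$. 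Applying $g$ to each such pair and using $g(x_t)=x_{t+k-h}$ together with the modulus bound gives $\varrho(x_{k(i+1)+kl},x_{k(j+1)+kl})\le\eta(\eps)$ for all $0\le l<m$, that is, $\varrho^{f^k}_m(x_{k(i+1)},x_{k(j+1)})\le\eta(\eps)$.

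Hence the shift $(i,j)\mapsto(i+1,j+1)$ maps each pair counted by the numerator $n^2 C^{f^k}_m(f^h(x),n,\eps)$ injectively to a pair with indices in $\{1,\dots,n\}\subseteq\{0,\dots,n\}$ counted by the numerator $(n+1)^2 C^{f^k}_m(x,n+1,\eta(\eps))$. Counting pairs therefore yields
\[
  n^2\,C^{f^k}_m(f^h(x),n,\eps)\le(n+1)^2\,C^{f^k}_m(x,n+1,\eta(\eps)).
\]
Finally I would clean up the constants: dividing by $n^2$, using $(n+1)^2/n^2=1+(2n+1)/n^2$ and the bound $C^{f^k}_m(x,n+1,\eta(\eps))\le 1$, gives
\[
  C^{f^k}_m(f^h(x),n,\eps)\le C^{f^k}_m(x,n+1,\eta(\eps))+\frac{2n+1}{n^2},
\]
and since $(2n+1)/n^2\le 3/n$ for $n\ge 1$, this is the asserted inequality. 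The only genuinely nontrivial point is the identification of $g=f^{k-h}$ as the right map; once that is in place the argument is a routine index shift and pair count, so I expect no real obstacle beyond the bookkeeping.
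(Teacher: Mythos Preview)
Your proof is correct and rests on the same key observation as the paper's: uniform continuity of $f^{k-h}$ carries the $f^k$-orbit of $f^h(x)$ onto the $f^k$-orbit of $x$ shifted by one step, after which the count of close pairs can only grow. The paper splits this into two moves---first passing from $f^h(x)$ to $f^k(x)$ via Lemma~\ref{L:c(fkx)=c(x)} (which requires a small piecewise inversion to define $\eta(\eps)$ in the right direction), then shifting indices via inequality~\eqref{EQ:c(fx)=c(x)-1} with $h'=1$---whereas you fuse both into a single injection $(i,j)\mapsto(i+1,j+1)$ and define $\eta(\eps)$ directly as a modulus of continuity; this is slightly cleaner but not genuinely different.
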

\begin{proof}
In the proof of Lemma~\ref{L:c(fkx)=c(x)} we have shown that for every $e>0$ there is
$d(e)\in (0,e)$
such that $C^{f^k}_m(f^h(x),n,d(e)) \le C^{f^k}_m(f^k(x),n,e)$; see (\ref{EQ:c(fkx)=c(x)-1}).
Fix a sequence $(e_i)_{i\ge 0}$ decreasing to zero and put $d_i=d(e_i)$; we may assume that
$d_i>d_{i+1}$ for every $i$. For every $\eps>0$ define
\begin{linenomath}\begin{equation*}
  \eta(\eps) =
  \begin{cases}
    e_i     &\text{if } \eps\in (d_{i+1},d_i] \text{ for some } i;
  \\
    \diam(X)   &\text{if } \eps > d_0.
  \end{cases}
\end{equation*}\end{linenomath}
Since $d_i\searrow 0$, $\eta(\eps)$ is defined for every $\eps>0$; further,
$\eps<\eta(\eps)$ for every $\eps\in (0, d_0]$.
Thus $C^{f^k}_m(f^h(x),n,\eps) \le C^{f^k}_m(f^k(x),n,\eta(\eps))$
for every $\eps>0$.
Combining this with (\ref{EQ:c(fx)=c(x)-1}), applied to $f'=f^k$, $h'=1$, and $\eps'=\eta(\eps)$, 
yields
\begin{linenomath}\begin{equation*}
  C^{f^k}_m(f^h(x),n,\eps)
  \le
  \left( \frac{n+1}{n} \right)^2   C^{f^k}_m(x,n+1,\eta(\eps))
  \le
  C^{f^k}_m(x,n+1,\eta(\eps)) + \frac 3n
  \,.
\end{equation*}\end{linenomath}
Since $\lim_\eps \eta(\eps)= 0$ is immediate by the choice of $\eta$, the lemma is proved.
\end{proof}

\begin{proof}[Proof of Theorem~\refThmA{}]
  We may assume that $k\ge 2$.
  Lemma~\ref{L:c(fkx)=c(x)-v2}, applied to every $h\in\{0,\dots,k-1\}$, gives that
  for every $\eps>0$ there is $\eta(\eps)>0$ such that $\lim_{\eps\to 0} \eta(\eps)=0$
  and
  \begin{linenomath}\begin{equation*}
    C^{f^k}_m(f^h(x), n,\eps)
      \le
      C^{f^k}_{m}(x, n+1,\eta(\eps)) + \frac 3n
  \end{equation*}\end{linenomath}
  for every $0\le h<k$, $x\in X$, and $m,n\in\NNN$.
  Now, by Lemma~\ref{L:fk-graph-corollary},
  \begin{linenomath}\begin{equation*}
  \begin{split}
    &C^f_{km}(x,kn,\eps)
   \le
    \frac{1}{k} \, \sum_{h=0}^{k-1} C^{f^k}_{m}(f^h(x),n,2\eps)
   \\
   &\ \le
    C^{f^k}_{m}(x,n+1,\eta(2\eps))
    +\frac{3}{n} \,.
  \end{split}
  \end{equation*}\end{linenomath}
  By taking the limit as $n$ approaches infinity, and using Lemma~\ref{L:c(x)-via-subsequence}
  we obtain
  \begin{linenomath}\begin{equation*}
    \cccu^{f}_{km}(x,\eps) \le \cccu^{f^k}_m(x,\eta(2\eps)),
    \qquad
    \cccl^{f}_{km}(x,\eps) \le \cccl^{f^k}_m(x,\eta(2\eps)).
  \end{equation*}\end{linenomath}
  Consequently, again using Lemma~\ref{L:c(x)-via-subsequence},
  \begin{linenomath}\begin{equation*}
    k\cdot \entru(f,x) \ge \entru(f^k,x),
    \qquad
    k\cdot \entrl(f,x) \ge \entrl(f^k,x).
  \end{equation*}\end{linenomath}
  Since the opposite inequalities were shown in Corollary~\ref{C:entr(fk)>=entr(f)},
  Theorem~\refThmA{} is proved.
\end{proof}

%%%%%%%%%%%%%%%%%%%%%%%%%%%%%%%%%%%%%%%%%%%%%%%%%%%%%%%%%%%%%%%%%%%%%%%%%%%%%%%
%%%%%%%%%%%%%%%%%%%%%%%%%%%%%%%%%%%%%%%%%%%%%%%%%%%%%%%%%%%%%%%%%%%%%%%%%%%%%%%
%%%%%%%%%%%%%%%%%%%%%%%%%%%%%%%%%%%%%%%%%%%%%%%%%%%%%%%%%%%%%%%%%%%%%%%%%%%%%%%
%%%%%%%%%%%%%%%%%%%%%%%%%%%%%%%%%%%%%%%%%%%%%%%%%%%%%%%%%%%%%%%%%%%%%%%%%%%%%%%
%%%%%%%%%%%%%%%%%%%%%%%%%%%%%%%%%%%%%%%%%%%%%%%%%%%%%%%%%%%%%%%%%%%%%%%%%%%%%%%
\section{Proof of Theorem~\refThmB}\label{S:proofB}

\begin{lemma}\label{L:c(x)>=1/r_m(eps)}
Let $(X,f)$ be a dynamical system, $x\in X$, $\eps>0$, and $m,n\in\NNN$. Then
\begin{linenomath}\begin{equation*}
  C^f_m(x,n,\eps) \ge \frac{1}{r_m(\eps/2, X)}.
\end{equation*}\end{linenomath}
\end{lemma}

\begin{proof}
The proof is pretty similar to that of Lemma~\ref{L:c(x)>=eta^m}; the
only difference is that instead of $(\eps/2)$-spanning sets we use $(m,\eps/2)$-spanning sets.
For completeness, the details follow.

Let $\{y_0,\dots,y_{p-1}\}$ be an $(m,\eps/2)$-spanning subset of
minimal cardinality $p=r_m(\eps/2,X)$.
Hence for every $i\ge 0$ and $x_i=f^i(x)$ there is $v_i$ with
$\varrho^f_m(x_i, y_{v_i})\le\eps/2$. For $0\le v<p$ put
\begin{linenomath}\begin{equation*}
  N_{v}=\{0\le i\le n-m:\ v_i=v\}
  \qquad\text{and}\qquad
  n_v=\card{N_v}.
\end{equation*}\end{linenomath}
Then, by the arithmetic-quadratic mean inequality,
\begin{linenomath}\begin{equation*}
  C^f_m(x,n,\eps)
    \ge   \frac{1}{n^2}   \cdot \sum_{v<p} n_v^2
    \ge   \frac{1}{n^2}   \cdot \frac{n^2}{p}
    = \frac{1}{r_m(\eps/2, X)} \,.
\end{equation*}\end{linenomath}
\end{proof}

\begin{proposition} \label{P:correl-and-topol-entropy-upper-bound}
  Let $(X,f)$ be a dynamical system and $x\in X$. Then
  \begin{linenomath}\begin{equation*}
    \entrl(f,x) \le \entru(f,x) \le h_\topol\left(f|_{\closure\orbit_f(x)}\right) \le h_\topol(f).
  \end{equation*}\end{linenomath}
\end{proposition}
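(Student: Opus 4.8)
The proposition has three inequalities. The first, $\entrl(f,x) \le \entru(f,x)$, is immediate from the definitions: since $\cccl^f_m(x,\eps) \le \cccu^f_m(x,\eps)$ always, taking $-\log$ reverses the inequality, and the $\limsup$ and $\liminf$ in the entropy definitions are arranged so that the lower entropy uses the larger correlation sum. The last inequality $h_\topol(f|_{\closure\orbit_f(x)}) \le h_\topol(f)$ is the standard monotonicity of topological entropy under restriction to a closed invariant subset; note $\closure\orbit_f(x)$ is indeed closed and $f$-invariant. So the entire content is the middle inequality, $\entru(f,x) \le h_\topol(f|_{\closure\orbit_f(x)})$, and that is where I expect the main obstacle to lie.

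Let me think carefully about this. The key intuition is dualization.The first inequality $\entrl(f,x)\le\entru(f,x)$ is purely formal. Since $\cccl_m^f(x,\eps)\le\cccu_m^f(x,\eps)$ for every $m$ and $\eps$ by Lemma~\ref{L:c(x)-props}(b), passing to $-\log$ and dividing by $m$ reverses this inequality at each fixed $m$; comparing $\liminf_{m}$ against $\limsup_{m}$ and then letting $\eps\to 0$ (the limits in $\eps$ exist by the monotonicity in $\eps$ recorded in Lemma~\ref{L:c(x)-props}(a)) yields the claim. The last inequality $h_\topol(f|_{\closure\orbit_f(x)})\le h_\topol(f)$ is the standard monotonicity of topological entropy under restriction to the closed $f$-invariant set $\closure\orbit_f(x)$. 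So the whole content of the proposition is the middle inequality, and that is where I would concentrate.

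To prove $\entru(f,x)\le h_\topol(f|_Y)$ with $Y=\closure\orbit_f(x)$, the plan is to sharpen Lemma~\ref{L:c(x)>=1/r_m(eps)} by replacing the ambient space $X$ with the orbit closure $Y$. The point is that the entire trajectory $(f^i(x))_{i\ge 0}$ already lies in $Y$, so in the pigeonhole argument of that lemma one may use a minimal $(m,\eps/2)$-spanning subset of $Y$ (of cardinality $r_m(\eps/2,Y)$, computed in the Bowen metric $\varrho_m^f$, equivalently for the restricted system $f|_Y$) rather than of $X$. The same counting then gives
\begin{equation*}
  C_m^f(x,n,\eps)\ \ge\ \frac{1}{r_m(\eps/2,Y)}
\end{equation*}
for every $n$, whence, taking $\liminf_{n\to\infty}$, we obtain $\cccl_m^f(x,\eps)\ge 1/r_m(\eps/2,Y)$.

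From here the computation is routine. Taking $-\log$, dividing by $m$, and passing to $\limsup_{m\to\infty}$ gives
\begin{equation*}
  \limsup_{m\to\infty}(-1/m)\log\cccl_m^f(x,\eps)\ \le\ \limsup_{m\to\infty}(1/m)\log r_m(\eps/2,Y),
\end{equation*}
and letting $\eps\to 0$ the right-hand side becomes exactly $h_\topol(f|_Y)$ by Bowen's definition of topological entropy applied to the subsystem $(Y,f|_Y)$, while the left-hand side becomes $\entru(f,x)$. I do not expect a serious obstacle here; the only thing to watch is the bookkeeping that replacing $X$ by the subsystem $Y$ is legitimate, i.e.\ that spanning $Y$ (not $X$) suffices because every trajectory point $f^i(x)$ already belongs to $Y$, and that $(m,\eps/2)$-spanning sets of $Y$ for $f|_Y$ coincide with those measured in the Bowen metric $\varrho_m^f$. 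Once that observation is in place, the middle inequality is just Lemma~\ref{L:c(x)>=1/r_m(eps)} relativized to the orbit closure, followed by the definition of entropy.
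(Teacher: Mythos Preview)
Your proposal is correct and follows essentially the same approach as the paper: the paper likewise reduces the middle inequality to Lemma~\ref{L:c(x)>=1/r_m(eps)} (the pigeonhole lower bound $C_m^f(x,n,\eps)\ge 1/r_m(\eps/2,X)$) together with Bowen's definition, and then notes that since the whole orbit of $x$ lies in $Y=\closure\orbit_f(x)$ one may apply the argument to the subsystem $(Y,f|_Y)$ to obtain the sharper bound $h_\topol(f|_Y)$.
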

The part corresponding to the lower local correlation entropy
was proved in \cite[p.~354]{takens1983invariants}.
The proof used the fact that if $x$
is a quasi-generic point \cite[(4.4)]{denker1976ergodic} of an invariant measure $\mu$,
then \cite[p.~355]{takens1983invariants}
\begin{linenomath}\begin{equation*}
  \entrl(f,x) \le \entrl(f,\mu) \le h_\mu(f).
\end{equation*}\end{linenomath}

\begin{proof}
  By Lemma~\ref{L:c(x)>=1/r_m(eps)} and Bowen's definition of topological entropy,
  \begin{linenomath}\begin{equation*}
  \begin{split}
    &\entru(f,x)
    =    \lim_{\eps\to 0} \limsup_{m\to\infty} (-1/m)\log \cccl^f_m(x,\eps)
    \\
    &\ \le  \lim_{\eps\to 0} \limsup_{m\to\infty} (1/m) \log r_m(\eps/2,X)
    = h_\topol(f).
  \end{split}
  \end{equation*}\end{linenomath}
  Applying this to $X'=\closure\orbit_f(x)$ and $f'=f|_{X'}$ yields the required inequality.
\end{proof}

\begin{remark}
  Proposition~\ref{P:correl-and-topol-entropy-upper-bound} is tightly connected with the fact that,
  for every $f$-invariant measure $\mu$,
  $\entru(f,\mu) \le h_\mu(f)$ (see Proposition~\ref{P:takens-corrEntropy-measureEntropy}).
  Thus, by (\ref{EQ:entr-strong-law}),
  \begin{linenomath}\begin{equation*}
    \entru(f,x) \le h_\mu(f)
    \qquad\text{for } \mu \text{-a.e.~} x\in X
  \end{equation*}\end{linenomath}
  provided $\mu$ is ergodic.
\end{remark}

%\medskip
Now we embark on the proof of the fact that, for dynamical systems on topological graphs,
local correlation entropies can be arbitrarily close to the topological entropy.

\begin{proposition}\label{P:extension-of-subshift}
  Let $(X,f)$ be a dynamical system having a subsystem $(Y,f)$, which is a topological extension
  of the full shift $(\Sigma_p,\sigma)$ for some $p\ge 2$.
  Then there is $\eps_0>0$ such that the following is true:
  For every $\alpha\in\Sigma_p$ there is $x_\alpha\in Y$ such that
  $x_\alpha\ne x_\beta$ whenever $\alpha\ne\beta$, and
  \begin{linenomath}\begin{equation*}
    C^f_m(x_\alpha,n,\eps) \le C^\sigma_m(\alpha,n,\tfrac12)
    \qquad
    \text{for every }\eps\in(0,\eps_0]
    \text{ and } m,n\in\NNN.
  \end{equation*}\end{linenomath}
  Consequently,
  \begin{linenomath}\begin{equation*}
    \entru(f,x_\alpha)\ge\entru(\sigma,\alpha)
    \qquad\text{and}\qquad
    \entrl(f,x_\alpha)\ge\entrl(\sigma,\alpha).
  \end{equation*}\end{linenomath}
\end{proposition}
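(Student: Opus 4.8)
The plan is to use the factor map $\pi\colon Y\to\Sigma_p$ witnessing the extension (a continuous surjection with $\pi\circ f=\sigma\circ\pi$) and to obtain the points $x_\alpha$ as representatives of the fibres $\pi^{-1}(\{\alpha\})$. First I would look at the sets $Y_a=\pi^{-1}([a])$ for $a\in\AAa_p$. Since each cylinder $[a]$ is clopen in $\Sigma_p$ and $\pi$ is continuous, the $Y_a$ are clopen, pairwise disjoint, and cover $Y$; by surjectivity they are nonempty, and as closed subsets of the compact space $Y$ they are compact. Hence I may set
\[
  \eps_0=\tfrac12\,\min_{a\ne b}\dist\big(Y_a,Y_b\big),
\]
which is strictly positive, being a minimum of finitely many distances between disjoint compact sets. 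For each $\alpha\in\Sigma_p$ I pick an arbitrary $x_\alpha\in\pi^{-1}(\{\alpha\})$; since distinct $\alpha$ have disjoint fibres, $\alpha\ne\beta$ forces $x_\alpha\ne x_\beta$.

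The key step is a pointwise separation estimate. If $y,z\in Y$ satisfy $\varrho(y,z)\le\eps_0$, then by the choice of $\eps_0$ they lie in the same cell $Y_a$, that is $(\pi(y))_0=(\pi(z))_0$. Now suppose $\varrho^f_m(y,z)\le\eps$ for some $\eps\in(0,\eps_0]$; then $\varrho(f^i(y),f^i(z))\le\eps_0$ for all $0\le i<m$, and since $Y$ is $f$-invariant the points $f^i(y),f^i(z)$ again lie in $Y$, whence $(\pi(f^i(y)))_0=(\pi(f^i(z)))_0$. Using $\pi\circ f=\sigma\circ\pi$ this says $(\pi(y))_i=(\pi(z))_i$ for $0\le i<m$, i.e.\ $\varrho^\sigma_m(\pi(y),\pi(z))\le 1/2$. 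Applying this with $y=f^i(x_\alpha)$, $z=f^j(x_\alpha)$, and recalling $\pi(f^i(x_\alpha))=\sigma^i(\alpha)$, every pair $(i,j)$ contributing to $C^f_m(x_\alpha,n,\eps)$ also contributes to $C^\sigma_m(\alpha,n,1/2)$, which gives the desired inequality $C^f_m(x_\alpha,n,\eps)\le C^\sigma_m(\alpha,n,1/2)$.

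To pass to entropies, I take $\liminf$ and $\limsup$ over $n$ in the correlation-sum inequality, obtaining $\cccl^f_m(x_\alpha,\eps)\le\cccl^\sigma_m(\alpha,1/2)$ and $\cccu^f_m(x_\alpha,\eps)\le\cccu^\sigma_m(\alpha,1/2)$ for all $\eps\in(0,\eps_0]$. Since $\log$ is increasing this reverses to $(-1/m)\log\cccl^f_m(x_\alpha,\eps)\ge(-1/m)\log\cccl^\sigma_m(\alpha,1/2)$; taking $\limsup_m$ and then $\lim_{\eps\to0}$ (the right-hand side being independent of $\eps$) yields $\entru(f,x_\alpha)\ge\limsup_{m\to\infty}(-1/m)\log\cccl^\sigma_m(\alpha,1/2)$, and symmetrically $\entrl(f,x_\alpha)\ge\liminf_{m\to\infty}(-1/m)\log\cccu^\sigma_m(\alpha,1/2)$. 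It remains to identify these right-hand sides with $\entru(\sigma,\alpha)$ and $\entrl(\sigma,\alpha)$. For this I invoke Lemma~\ref{L:subshifts-correl-entropy-local}: with $\eps=1/2\in[2^{-1},2^0)$ one has $\cccl^\sigma_m(\alpha,1/2)=\cccl^\sigma_1(\alpha,2^{-m})$, so setting $G(j)=-\log\cccl^\sigma_1(\alpha,2^{-j})$ the above $\limsup$ is $\limsup_j G(j)/j$. The same lemma shows that for every fixed $\eps\in(0,1)$, with $k$ as in that lemma, $\limsup_m(-1/m)\log\cccl^\sigma_m(\alpha,\eps)=\limsup_m G(k+m-1)/m=\limsup_j G(j)/j$, since $(k+m-1)/m\to1$ and $G(j)/j$ is bounded, exactly as in Lemma~\ref{L:c(x)-via-subsequence}. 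Hence $\entru(\sigma,\alpha)=\limsup_j G(j)/j$, and the lower-entropy identity is analogous.

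I expect the main obstacle to be the strict positivity of $\eps_0$ rather than anything conceptual in the limit passages: it rests on the cylinders $[a]$ being clopen, so that the $Y_a$ are compact and can be separated by a uniform gap, together with the invariance of $Y$, which keeps $f^i(x_\alpha)$ inside the region where the partition argument applies. Once these are in place, the reduction of the subshift entropies to the single threshold $\eps=1/2$ via Lemma~\ref{L:subshifts-correl-entropy-local} is routine.
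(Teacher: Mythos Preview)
Your proposal is correct and follows essentially the same route as the paper: define $Y_a=\pi^{-1}([a])$, take $\eps_0=\tfrac12\min_{a\ne b}\dist(Y_a,Y_b)>0$, choose $x_\alpha\in\pi^{-1}(\{\alpha\})$, and use the semiconjugacy to show that each recurrence counted in $C^f_m(x_\alpha,n,\eps)$ is also counted in $C^\sigma_m(\alpha,n,1/2)$. Your passage to the entropy inequalities via Lemma~\ref{L:subshifts-correl-entropy-local} and the index-shift argument is likewise the same device the paper uses, only phrased through the auxiliary function $G(j)$ rather than by rewriting $C_m^\sigma(\alpha,n,1/2)=C_{m-k_\eps+1}^\sigma(\alpha,n,\eps)$ directly.
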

\begin{proof}
  Let $h:(Y,f)\to (\Sigma_p,\sigma)$ be a factor map (that is, $h$ is a continuous surjection
  and $h\circ f = \sigma\circ h$).
  For every $j\in\AAa_p=\{0,\dots,p-1\}$ put $Y_j = h^{-1}([j])$ (recall that $[j]$ denotes
  the cylinder $\{\alpha\in\Sigma_p:\ \alpha_0=j\}$); this is a closed, hence compact set.
  Put $\eps_0=\frac12\min \{\dist(Y_i,Y_j):\ {i\ne j}\}$; since the sets $Y_j$ are pairwise disjoint
  and compact, we have
  $\eps_0>0$.

  Fix any $\alpha=\alpha_0\alpha_1\ldots\in\Sigma_p$ and take arbitrary
  $x=x_\alpha\in h^{-1}(\{\alpha\})$;
  clearly, $x_\alpha\ne x_\beta$ whenever $\alpha\ne\beta$.
  Realize that $f^i(x)\in Y_{\alpha_i}$ for every $i$.
  Hence, by the choice of $\eps_0$, $\varrho(f^i(x),f^j(x))\le\eps_0$ implies $\alpha_i=\alpha_j$.
  Thus also, for every $i$ and $j$,
  \begin{linenomath}\begin{equation*}
    \varrho^f_m(f^i(x),f^j(x))\le\eps_0
    \qquad\text{implies}\qquad
    \tilde\varrho^\sigma_m(\sigma^i(\alpha),\sigma^j(\alpha))\le \tfrac12
  \end{equation*}\end{linenomath}
  (where $\tilde\varrho$ denotes the metric on $\Sigma_p$, see \textsection\ref{SS:shifts};
  recall that $\tilde\varrho(\alpha,\beta)\le \frac12$ is
  equivalent to $\alpha_0=\beta_0$).
  Now $C^f_m(x,n,\eps)\le C^\sigma_m(\alpha,n,\frac12)$ for every $\eps\in(0,\eps_0]$
  and $m,n\in\NNN$,
  from which the first assertion immediately follows.

  The second assertion then follows by Lemma~\ref{L:subshifts-correl-entropy-local}.
  To see this, assume that $\eps_0\le 1$. For every $\eps\in(0,\eps_0]$ denote by $k_\eps$
  the unique nonnegative integer such that $\eps\in \big[2^{-k_\eps},2^{-(k_\eps-1)} \big)$.
  Then, by Lemma~\ref{L:subshifts-correl-entropy-local},
  $C_m^\sigma(\alpha,n,\frac12)
   =C_{m-k_\eps+1}^\sigma(\alpha,n,2^{-k_\eps})
   =C_{m-k_\eps+1}^\sigma(\alpha,n,\eps)$.
   So, by the first part of the lemma,
   \begin{linenomath}\begin{equation*}
   \begin{split}
     &\limsup_{m\to\infty} (-1/m) \log\cccl_m^f(x_\alpha,\eps)
     \ge
     \limsup_{m\to\infty} (-1/m) \log\cccl_{m-k_\eps+1}^\sigma(\alpha,\eps)
     \\
     &\ =
     \limsup_{m\to\infty} (-1/m) \log\cccl_{m}^\sigma(\alpha,\eps)
   \end{split}
   \end{equation*}\end{linenomath}
   and $\entru(f,x_\alpha)\ge\entru(\sigma,\alpha)$. Analogously for lower entropies.
\end{proof}

Recall that subsets $X_0,\dots,X_{p-1}$ of $X$
form a \emph{strict $p$-horseshoe} of a dynamical system $(X,f)$
if the sets $X_i$ are nonempty, closed, pairwise disjoint, and
$f(X_i)\supseteq \bigcup_j X_j$ for every $0\le i<p$.

\begin{lemma}\label{L:horseshoe=>subsystem}
  Let $(X,f)$ be a dynamical system containing a strict $p$-horseshoe $X_0,\dots,X_{p-1}$
  for some $p\ge 2$.
  Then $(X,f)$ has a subsystem $(Y,f)$ which is a topological extension of the full shift
  $(\Sigma_p,\sigma)$.
\end{lemma}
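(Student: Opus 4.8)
The plan is to realize $(\Sigma_p,\sigma)$ as a factor of a suitable subsystem of $(X,f)$ via the standard itinerary (symbolic coding) construction. First I would set $Z=\bigcup_{j<p} X_j$, which is closed as a finite union of closed sets; moreover, since the $X_j$ are pairwise disjoint closed sets, each $X_j$ is relatively clopen in $Z$. I would then define
\[
  Y = \bigcap_{i\ge 0} f^{-i}(Z).
\]
This set is closed, and it is $f$-invariant: if $x\in Y$ then $f^i(f(x))=f^{i+1}(x)\in Z$ for every $i\ge 0$, so $f(x)\in Y$. Hence $(Y,f)$ is a subsystem of $(X,f)$.

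Next I would code the points of $Y$ by their itineraries. For $x\in Y$ and each $i\ge 0$ there is, by disjointness of the $X_j$, a \emph{unique} $j<p$ with $f^i(x)\in X_j$; putting $h(x)_i=j$ defines a map $h\colon Y\to\Sigma_p$. From $f^i(f(x))=f^{i+1}(x)$ one reads off $h\circ f=\sigma\circ h$. For continuity I would verify that, for every word $w=w_0\dots w_{k-1}$,
\[
  h^{-1}([w]) = \{x\in Y:\ f^i(x)\in X_{w_i}\text{ for }0\le i<k\}
\]
is open in $Y$: since $f^i(Y)\subseteq Z$ and $X_{w_i}$ is clopen in $Z$, each set $(f^i|_Y)^{-1}(X_{w_i})$ is open in $Y$, so their finite intersection is open; as the cylinders form a base of $\Sigma_p$, the map $h$ is continuous.

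It remains to prove surjectivity of $h$, which is where the horseshoe hypothesis enters and which I expect to be the crux. For a word $w=w_0\dots w_{k-1}$ put $X_w=\{x\in X:\ f^i(x)\in X_{w_i}\text{ for }0\le i<k\}$, a closed set. I would show $X_w\neq\emptyset$ by induction on $k$. The base case $k=1$ is clear since $X_w=X_{w_0}\neq\emptyset$. For $k\ge 2$ write $w=w_0 w'$ with $w'=w_1\dots w_{k-1}$; by the inductive hypothesis $X_{w'}\neq\emptyset$, and since $X_{w'}\subseteq X_{w_1}\subseteq Z\subseteq f(X_{w_0})$ (using $f(X_{w_0})\supseteq\bigcup_j X_j$), every point of $X_{w'}$ has an $f$-preimage in $X_{w_0}$, which yields a point of $X_w=X_{w_0}\cap f^{-1}(X_{w'})$.

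Finally, for $\alpha\in\Sigma_p$ the sets $X_{\alpha_0\dots\alpha_{k-1}}$ form a decreasing chain of nonempty closed subsets of the compact space $X$, so their intersection is nonempty by the finite intersection property; any point $x$ in it satisfies $f^i(x)\in X_{\alpha_i}\subseteq Z$ for all $i\ge 0$, hence $x\in Y$ and $h(x)=\alpha$. Thus $h$ is a continuous equivariant surjection, i.e.\ a factor map, and $(Y,f)$ is a topological extension of $(\Sigma_p,\sigma)$. The only genuinely delicate step is the inductive nonemptiness of the cylinder sets $X_w$, which relies essentially on the defining property $f(X_i)\supseteq\bigcup_j X_j$ of a strict horseshoe; everything else is routine topology of compact spaces.
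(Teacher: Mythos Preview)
Your proof is correct and follows the same standard itinerary/symbolic-coding construction the paper invokes; the paper merely sketches it (``This is standard\dots''), while you spell out the details of nonemptiness of the cylinder sets $X_w$ and continuity of the coding map. No gaps.
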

\begin{proof}
  This is standard.
  Since the sets $X_0,\dots,X_{p-1}$ form a strict $p$-horseshoe, in a
  usual way for every $k\ge 2$ we can construct disjoint nonempty
  compact subsets $X_a$ ($a\in\AAa_p^k$) such that
  \begin{linenomath}\begin{equation*}
   f(X_{a_0a_1\dots a_{k-1}})=X_{a_1a_2\dots a_{k-1}}
   \qquad\text{and}\qquad
   X_{a_0a_1\dots a_{k-1}} \subseteq X_{a_0a_1\dots a_{k-2}}
  \end{equation*}\end{linenomath}
  for every $a=a_0a_1\dots a_{k-1}\in\AAa_p^k$. For
  $\alpha=\alpha_0\alpha_1\ldots\in\Sigma_p$ put
  $X_\alpha=\bigcap_{k\ge 1} X_{\alpha_0\dots\alpha_{k-1}}$. Then
  $Y=\bigcup_\alpha X_\alpha$ is compact, $\sigma(Y)=Y$, and
  $(Y,f|_Y)$ is a topological extension of the full shift $(\Sigma_p,\sigma)$.
\end{proof}

Now we are ready to prove Theorem~\refThmB{}.
\begin{proof}[Proof of Theorem~\refThmB{}]
  By Proposition~\ref{P:correl-and-topol-entropy-upper-bound} it suffices to prove the second part
  of the theorem. We may assume that $h_\topol(f)>0$.
  Take arbitrary $0<h<h_\topol(f)$. By \cite{llibre1993horseshoes}
  there are integers $p,k$ with $(1/k)\log p\ge h$ such that $f^k$ has a strict $p$-horseshoe.
  By Corollary~\ref{C:subshifts-correl-entropy-bernoulli},
  Lemma~\ref{L:horseshoe=>subsystem}, and Proposition~\ref{P:extension-of-subshift},
  there is a Cantor set $X_h$ such that $\entrl(f^k,x)\ge \log p$ for every $x\in X_h$.
  Hence, by Theorem~\refThmA{}, $\entrl(f,x)=(1/k)\,\entrl(f^k,x)\ge (1/k) \log p \ge h$
  for every $x\in X_h$.
\end{proof}

\begin{remark}[Infimum of local correlation entropies]
For every continuous map $f\colon X\to X$ of a topological graph $X$
we always have
\begin{linenomath}\begin{equation*}
 \inf_{x\in X} \entrl(f,x) = \inf_{x\in X} \entru(f,x) = 0.
\end{equation*}\end{linenomath}
This follows from Proposition~\ref{P:correl-and-topol-entropy-upper-bound}
and from the fact that positive entropy maps of topological graphs
have (dense) periodic points.
\end{remark}

The following two examples show that it can happen that the
local correlation entropy at every point is strictly smaller than the
topological entropy of $f$ and that, in positive entropy systems on topological graphs,
the set of those $x$
with positive local correlation entropy can be negligible from the
measure-theoretic point of view.

\begin{example} Take $\lambda\in (0,\infty]$. For $n\in\NNN$ let $I_n=[1/(n+1),1/n]$  and let
$f_n:I_n\to I_n$ be such that it fixes the end points of $I_n$,
$h_\topol(f_n)<\lambda$ and $\sup_n h_\topol(f_n)=\lambda$. Define a map $f:I\to
I$ by
\begin{linenomath}\begin{equation*}
 f(0)=0,\qquad
 f(x)=f_n(x) \text{ if } x\in I_n, n\ge 1.
\end{equation*}\end{linenomath}
Then $f$ is continuous and $h_\topol(f)=\lambda$
(see e.g.~\cite[Theorem~11.2]{pesin1997dimension}).
On the other hand, for
every $x$ we have $\entru(f,x)<\lambda$. In fact, if $x=0$ then
$\entr(f,x)=0$ since $x$ is fixed, and if $x\in I_n$ then $\entru(f,x)\le
h_\topol(f_n)<\lambda$ by Proposition~\ref{P:correl-and-topol-entropy-upper-bound}.
\end{example}

\begin{example}
Let $f:[-1,1]\to [-1,1]$ be defined by $f(x)=1-\alpha x^2$, where
$\alpha\in (1,2)$ is such that $1-\alpha(1-\alpha)^2=0$. Then
almost every point $x$ is attracted by the $3$-periodic orbit of the
point $0$ \cite[p.~119]{collet1980iterated} and hence $\entr(f,x)=0$.
On the other hand, having a point with period $3$, the topological entropy of
$f$ is positive.
\end{example}

%%%%%%%%%%%%%%%%%%%%%%%%%%%%%%%%%%%%%%%%%%%%%%%%%%%%%%%%%%%%%%%%%%%%%%%%%%%%%%%
%%%%%%%%%%%%%%%%%%%%%%%%%%%%%%%%%%%%%%%%%%%%%%%%%%%%%%%%%%%%%%%%%%%%%%%%%%%%%%%
%%%%%%%%%%%%%%%%%%%%%%%%%%%%%%%%%%%%%%%%%%%%%%%%%%%%%%%%%%%%%%%%%%%%%%%%%%%%%%%
%%%%%%%%%%%%%%%%%%%%%%%%%%%%%%%%%%%%%%%%%%%%%%%%%%%%%%%%%%%%%%%%%%%%%%%%%%%%%%%
%%%%%%%%%%%%%%%%%%%%%%%%%%%%%%%%%%%%%%%%%%%%%%%%%%%%%%%%%%%%%%%%%%%%%%%%%%%%%%%
\section{Uniquely ergodic systems}\label{S:uniqueErg}
In this section we summarize facts on uniquely ergodic systems, which will be used
in Section~\ref{S:strictlyErgExample}.
Following \cite{grillenberger1973constructions},
we say that a set $A\subseteq\NNN_0$ is \emph{uniform Ces\`aro with density $\alpha\ge 0$}
if for every $\eps>0$ there is $n_0\in\NNN$ such that, for every $n\ge n_0$ and $j\in\NNN_0$,
\begin{equation}\label{EQ:unifCesaro-def}
  \left|
    \frac{1}{n} \cdot \card{A\cap [j,j+n)} \ - \ \alpha
  \right|
  \ < \ \eps.
\end{equation}
In such a case the density $\alpha$ of $A$ will be denoted by $d(A)$.
It is easy to check that $A\subseteq\NNN_0$ is uniform Ces\`aro with density $\alpha$
if and only if there is $l\in\NNN$ such that for every $\eps>0$ there is $n_0\in\NNN$ with
\begin{equation}\label{EQ:unifCesaro-def2}
  \left|
    \frac{1}{ln} \cdot \card{A\cap [lj,lj+ln)} \ - \ \alpha
  \right|
  \ < \ \eps
  \qquad\text{for every }n\ge n_0 \text{ and } j\in\NNN_0.
\end{equation}

Let $p\ge 2$. For words $u,v\in\AAa_p^*$ with $\len{u}\le\len{v}$ and an integer $l\ge 1$ put
\begin{linenomath}\begin{equation*}
  N_v^{(l)}(u) = \{i\in\NNN_0:\ v[il,il+\abs{u})=u\},
  \qquad
  \tau_v^{(l)}(u) = \frac{1}{\big\lfloor \len{v}/l\big\rfloor} \cdot \cardbig{N_v^{(l)}(u)};
\end{equation*}\end{linenomath}
so $\tau_v^{(l)}(u)\le 1$ is the frequency of occurrences of $u$ in $v$ at positions
which are multiples of $l$.
(Since $N_v^{(l)}(u)\subseteq \big[0,\lfloor(\len{v}-\len{u})/l\rfloor\big]$,
in the definition of $\tau_v^{(l)}(u)$ we should divide by $1+\lfloor (\len{v}-\len{u})/l\rfloor$;
the difference is, of course, asymptotically negligible.)
If $u\in\AAa_p^*$ and $x\in\Sigma_p$, define $N_x^{(l)}(u)$ analogously.
For abbreviation, we often write $N_x,\tau_x$ and $N_v,\tau_v$ instead of
$N_x^{(1)}, \tau_x^{(1)}$ and $N_v^{(1)}, \tau_v^{(1)}$.
Note that $N_x^{(l)}(u)$ is uniform Ces\`aro if and only if for every $u\in\AAa_p^*$ the limit
$\lim_{n} \tau_{x[jl, (j+n)l)}^{(l)}(u)$ exists uniformly in $j$ and does not depend on $j$;
in such a case we have
\begin{equation}\label{EQ:uniqErgod-density-Nxl}
  d\big(N_x^{(l)}(u)\big)
  =
  (1/l) \lim_{n\to\infty} \tau_{x[jl, (j+n)l)}^{(l)}(u)
  \qquad
  \text{for every } j.
\end{equation}

By \cite[Theorem~3.9]{hahn1967entropy}
we have the following.

\begin{lemma}[\cite{hahn1967entropy}]\label{L:uniqErgod-unifCesaro}
  Let $x\in\Sigma_p$ be almost periodic. Assume that $N_x(u)$ is uniform Ces\`aro
  for every $u\in\AAa_p^*$.
  Then the subshift $(\closure\orbit_\sigma(x),\sigma)$ is strictly ergodic. Moreover,
  \begin{linenomath}\begin{equation*}
    \mu([u])
    = d\left( N_x(u) \right)
    = \lim_{n\to\infty} \tau_{x[j, j+n)}(u)
    \qquad\text{for every } u\in\AAa_p^* \text{ and } j\in\NNN_0,
  \end{equation*}\end{linenomath}
  where $\mu$ is the unique invariant measure of $(\closure\orbit_\sigma(x),\sigma)$.
\end{lemma}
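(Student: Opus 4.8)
The plan is to prove Lemma~\ref{L:uniqErgod-unifCesaro} by verifying the classical criterion for unique ergodicity via uniform convergence of Birkhoff averages on a dense subalgebra of $C(\closure\orbit_\sigma(x))$, namely the algebra generated by the cylinder indicator functions $\mathbf{1}_{[u]}$. First I would observe that, for a subshift, the functions $\{\mathbf{1}_{[u]}: u\in\AAa_p^*\}$ linearly span a dense subalgebra of the space of continuous functions (indeed cylinders form a clopen base of the topology), so by the Stone--Weierstrass theorem and the standard characterization of unique ergodicity (see e.g.\ \cite[Theorem~4.10]{einsiedler2011ergodic}), it suffices to show that for each word $u$ the averages $\frac{1}{n}\sum_{i=j}^{j+n-1}\mathbf{1}_{[u]}(\sigma^i(y))$ converge, uniformly in $y\in\closure\orbit_\sigma(x)$ and in the starting index, to a constant. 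The key translation is that $\mathbf{1}_{[u]}(\sigma^i(y))=1$ precisely when $y[i,i+\abs{u})=u$, so this Birkhoff average is exactly $\tau_{y[j,j+n)}(u)$ up to the asymptotically negligible boundary correction already flagged in the text.

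Next I would reduce the uniform convergence over all of $\closure\orbit_\sigma(x)$ to the single orbit of $x$. The hypothesis that $N_x(u)$ is uniform Ces\`aro gives, by definition~(\ref{EQ:unifCesaro-def}), that $\tau_{x[j,j+n)}(u)\to d(N_x(u))$ uniformly in $j$ as $n\to\infty$. The point is that this uniform-in-$j$ control over the orbit of $x$ propagates to every point of the orbit closure: any $y\in\closure\orbit_\sigma(x)$ has arbitrarily long initial segments that coincide with segments $x[j,j+L)$ of $x$ for suitable $j$, so the frequency of $u$ in any window of $y$ is approximated (within the prescribed $\eps$ and for $n\ge n_0$) by the corresponding frequency in a window of $x$. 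This gives uniform convergence of the averages to the common constant $d(N_x(u))$ on all of $\closure\orbit_\sigma(x)$, which is exactly the criterion establishing unique ergodicity; minimality (hence strict ergodicity) follows from the almost periodicity of $x$, since the orbit closure of an almost periodic point is minimal.

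Finally, the value of the unique invariant measure on cylinders is read off from the limit: the ergodic theorem for uniquely ergodic systems forces $\mu([u])=\int \mathbf{1}_{[u]}\,d\mu$ to equal the uniform limit of the Birkhoff averages, which is $d(N_x(u))=\lim_n\tau_{x[j,j+n)}(u)$, independent of $j$. This matches the asserted formula, with the identity $d(N_x(u))=\lim_n\tau_{x[j,j+n)}(u)$ being the $l=1$ instance of the density relation~(\ref{EQ:uniqErgod-density-Nxl}).

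I expect the main obstacle to be the transfer of the uniform-in-$j$ frequency control from the single reference orbit of $x$ to an \emph{arbitrary} point $y$ of the orbit closure. On the orbit of $x$ itself the uniform Ces\`aro hypothesis is immediate, but a generic $y\in\closure\orbit_\sigma(x)$ is only a limit of shifts of $x$, so one must argue carefully that counting occurrences of a \emph{fixed} word $u$ in a window $y[j,j+n)$ is a continuous-enough functional to be approximated by counts in nearby segments of $x$ uniformly in the window length and location. This is precisely the content invoked from \cite[Theorem~3.9]{hahn1967entropy}, and in a self-contained account it is the step requiring the most care; everything else is the routine Stone--Weierstrass and ergodic-averaging machinery.
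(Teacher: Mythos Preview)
The paper does not give its own proof of this lemma: it simply records it as \cite[Theorem~3.9]{hahn1967entropy} and moves on. Your outline is the standard argument and is correct; the transfer step you flag as the main obstacle is in fact straightforward here, since for any $y\in\closure\orbit_\sigma(x)$ and any finite window length $L$ there is $k$ with $y[0,L)=x[k,k+L)$, so the count of $u$ in $y[j,j+n)$ coincides exactly with the count in $x[k+j,k+j+n)$, and the uniform-in-$j$ bound from~(\ref{EQ:unifCesaro-def}) applies verbatim.
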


The following lemma gives a condition on $x$ implying
strict ergodicity of its orbit closure.

\begin{lemma}\label{L:strictlyErg-uniqueErg-equivCond}
  Let $x\in\Sigma_p$ be almost periodic and
  let $(l_j)_{j\ge 1}$ be an increasing sequence of positive integers
  with every $l_{j+1}$ being a multiple of $l_j$.
  Assume that, for every $j\ge 1$ and every $l_j$-word $v$, the set
  \begin{linenomath}\begin{equation*}
    N_x^{(l_j)}(v) = \{i\in\NNN_0:\ x[il_j,(i+1)l_j)=v\}
  \end{equation*}\end{linenomath}
  is uniform Ces\`aro.
  Then the subshift $(\closure\orbit_\sigma(x),\sigma)$ is strictly ergodic.
\end{lemma}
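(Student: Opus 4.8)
I want to show that the hypotheses of Lemma~\ref{L:strictlyErg-uniqueErg-equivCond} imply those of Lemma~\ref{L:uniqErgod-unifCesaro}, i.e. I need to reduce the "sampled at multiples of $l_j$" uniform Ces\`aro condition to the ordinary (step-1) uniform Ces\`aro condition $N_x(u)$ for every $u\in\AAa_p^*$. Let me think carefully about how these two conditions relate.

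The hypothesis says: for each $j$, and each word $v$ of length exactly $l_j$, the set of indices $i$ where $x[il_j,(i+1)l_j)=v$ is uniform Ces\`aro (density with step $l_j$, counting occurrences of $v$ aligned to the grid $l_j\mathbb{Z}$). I want to conclude $N_x(u)$ is uniform Ces\`aro for an arbitrary word $u$ — where $N_x(u)$ counts ALL occurrences of $u$ at ALL positions, not just grid-aligned ones.

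Let me see. Fix $u$ with $|u|=k$. Choose $j$ large so that $l_j \geq k$ (in fact I'll want $l_j$ much larger than $k$). The idea: an occurrence of $u$ at position $p$ can be classified by which $l_j$-block it starts in, i.e. by the residue $p \bmod l_j$ and the block index $\lfloor p/l_j\rfloor$. For a fixed residue $r \in \{0,\dots,l_j-1\}$, counting occurrences of $u$ at positions $\equiv r \pmod{l_j}$ is the same as counting, among the length-$(l_j)$ blocks (or pairs of consecutive blocks, since $u$ can straddle a block boundary when $r + k > l_j$), those length-$l_j$ (or $2l_j$) words $v$ that contain $u$ at the appropriate internal offset. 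By the hypothesis applied at level $j$ (or level $j+1$, to absorb straddling pairs into a single block), each such grid-aligned count is uniform Ces\`aro, hence so is the finite sum over all admissible $v$ for that offset, and then summing over all $l_j$ residues $r$ recovers $N_x(u)$. So $N_x(u)$ is a finite sum of uniform Ces\`aro sets' counting functions, hence uniform Ces\`aro.

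The two technical points I must handle are the following. First, straddling: an occurrence of $u$ starting at residue $r$ with $r+k>l_j$ spans two consecutive $l_j$-blocks, so its indicator is determined by a length-$2l_j$ window aligned to the grid; here I use that $l_{j+1}$ is a multiple of $l_j$ and pass to level $j+1$ (pick $j$ so that $l_{j+1}\ge 2l_j$ automatically, or just note a pair of consecutive $l_j$-blocks is a sub-window of an $l_{j+1}$-block, so its grid-aligned frequency is controlled by the level-$(j+1)$ hypothesis summed over the finitely many $l_{j+1}$-words realizing the given pattern). The nesting $l_j \mid l_{j+1}$ is exactly what makes this compatibility work, and the equivalence (\ref{EQ:unifCesaro-def2}) lets me pass freely between "density measured with step $1$" and "density measured with step $l$". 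Second, I must check that a finite union/sum of uniform Ces\`aro sets is uniform Ces\`aro: this is routine from the definition (\ref{EQ:unifCesaro-def}), since the frequency of $u$ in any window $[j,j+n)$ is an additive count, and I split each such window into its grid-aligned pieces with $O(l_j)$ boundary error, which is asymptotically negligible as $n\to\infty$ uniformly in the starting point $j$.

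So the plan is: \textbf{(1)} Fix $u$, choose $j$ with $l_j\ge 2|u|$ (possible by passing down the sequence, or just use two adjacent levels). \textbf{(2)} Express the count $\card{N_x(u)\cap[a,b)}$ as a sum over residues $r\bmod l_j$ and over the finitely many grid-aligned block-patterns (length $l_j$ if non-straddling, covered by a level-$(j+1)$ block if straddling) whose occurrence forces a $u$ at the right offset. \textbf{(3)} Invoke the hypothesis to see each grid-aligned pattern-count is uniform Ces\`aro, and use equation~(\ref{EQ:unifCesaro-def2}) to rewrite these step-$l_j$ densities as ordinary densities. \textbf{(4)} Sum finitely many uniform Ces\`aro functions and absorb the $O(l_j)$ boundary terms to conclude $N_x(u)$ is uniform Ces\`aro. \textbf{(5)} Since this holds for every $u\in\AAa_p^*$ and $x$ is almost periodic, Lemma~\ref{L:uniqErgod-unifCesaro} gives strict ergodicity of $(\closure\orbit_\sigma(x),\sigma)$.

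\textbf{Main obstacle.} The genuinely fiddly step is \textbf{(2)}--\textbf{(3)}: bookkeeping the straddling occurrences of $u$ across $l_j$-block boundaries and confirming that they are still governed by a grid-aligned condition at some level of the sequence $(l_j)$. This is where the multiplicativity hypothesis $l_j\mid l_{j+1}$ is essential and where one must be careful that the "for every $l_j$-word $v$" hypothesis is strong enough — it is, because an occurrence of $u$ at a fixed offset inside a block is a union over the finitely many $l_j$-words $v$ that realize that offset, and the density of each is provided by the hypothesis. Once the reduction to a finite sum of uniform Ces\`aro counting functions is in place, the remaining work is the routine uniform-density estimate with negligible boundary error.
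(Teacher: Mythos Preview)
Your reduction to Lemma~\ref{L:uniqErgod-unifCesaro} and the residue-by-residue decomposition for the non-straddling occurrences are correct and close in spirit to the paper. The gap is in step~(3): you assert that ``a pair of consecutive $l_j$-blocks is a sub-window of an $l_{j+1}$-block'', but this fails whenever the pair crosses an $l_{j+1}$-boundary, i.e.\ at $l_j$-block index $q$ with $q\equiv -1\pmod{l_{j+1}/l_j}$. Passing to level $j+1$ therefore does not close the decomposition --- it only pushes the residual straddling up one level --- so the promised finite sum of uniform Ces\`aro counting functions in step~(4) is never actually attained. What \emph{is} true is that the set of level-$j$ straddling positions has density at most $(|u|-1)/l_j$; hence the uniform upper and lower densities of $N_x(u)$ differ by at most $(|u|-1)/l_j$ for every admissible $j$, and letting $j\to\infty$ yields uniform Ces\`aro. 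You need this limiting step, not a finite decomposition.

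The paper carries out the same reduction but controls the boundary effect differently. It fixes $l=l_j>|u|$, introduces a second scale $r=l_{j'}/l$, and by a double-counting argument (each occurrence of $u$ lies in about $r$ overlapping $rl$-windows aligned to the $l$-grid) proves
\[
  0\ \le\ \tau_{x[sl,(s+t)l)}^{(1)}(u)\ -\ \sum_{v\in\AAa_p^{rl}} \tau_{x[sl,(s+t)l)}^{(l)}(v)\,\tau_v^{(1)}(u)\ <\ \frac{1}{r}+\frac{2r}{t}\,.
\]
The $1/r$ term is exactly the straddling error you tried to eliminate by recursion; the paper instead makes it small by taking $r$ large (applying the hypothesis at level $j'$ with $rl=l_{j'}$) and then $t$ much larger still.
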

\begin{proof}
  The proof is inspired by that of \cite[Lemma~1.9]{grillenberger1973constructions}.
  Fix any nonempty word $u\in\AAa_p^*$; we want to prove that $N_x(u)$ is uniform Ces\`aro.
  Take $j$ such that $l=l_j>\len{u}$.
  Further, take arbitrary integers $1\le r<t$ and $0\le s$; for abbreviation, write
  $N_{s t}^{(\cdot)}$ and $\tau_{s t}^{(\cdot)}$
  instead of $N_{x[sl,(s+t)l)}^{(\cdot)}$ and $\tau_{x[sl,(s+t)l)}^{(\cdot)}$.

  We first prove that
  \begin{equation}\label{EQ:strictlyErg-uniqueErg-equivCond1}
    0
    \le
      \tau_{st}^{(1)}(u)
      - \sum_{v\in\AAa_p^{rl}} \tau_{s t}^{(l)}(v) \cdot \tau_v^{(1)}(u)
    <
    \frac{1}{r} + \frac{2r}{t}\,.
  \end{equation}
  To this end, for $i\in sl+N_{st}^{(1)}(u) \subseteq [sl, (s+t)l)$ put
  \begin{equation}\label{EQ:strictlyErg-uniqueErg-equivCond2}
  \begin{split}
    B_i
    &=
    \Big\{h\in\NNN_0:\ [i,i+\len{u})\ \subseteq\  [hl,(h+r)l)\ \subseteq\  [sl, (s+t)l)\Big\}
    \\
    &= [s,s+t-r]
      \cap \Big[
              \left\lceil ({i+\len{u}})/{l}  \right\rceil - r, \
              \left\lfloor {i}/{l}  \right\rfloor
           \Big]
  \end{split}
  \end{equation}
  and
  \begin{linenomath}\begin{equation*}
    b_{s t} = \sum_{i\,\in\, sl+N_{st}^{(1)}(u)} \card{B_i}.
  \end{equation*}\end{linenomath}
  That is, $b_{s t}$ is the number of pairs $(i,h)$, where $i-sl\in N_{st}^{(1)}(u)$
  and $h\in B_i$.
  But every such pair $(i,h)$ corresponds (in a one-to-one way) to a triple
  $(v,h',i')$, where $v\in \AAa_p^{rl}$, $h'\in N_{s t}^{(l)}(v)$, and $i'\in N_v^{(1)}(u)$;
  to see this, put $v=x[hl,(h+r)l)$, $h'=h$, and $i'=i-hl$.
  Thus
  \begin{equation}\label{EQ:strictlyErg-uniqueErg-equivCond3}
    b_{s t}
    = \sum_{v\in\AAa_p^{rl}} \cardbig{N_{s t}^{(l)}(v)} \cdot \cardbig{N_v^{(1)}(u)} \,.
  \end{equation}
  Further, by (\ref{EQ:strictlyErg-uniqueErg-equivCond2}),
  $0\le \card{B_i} \le r$ for every $i$
  and, provided $(s+r)l \le i \le (s+t-r)l$, $\card{B_i} \ge r-1$.
  This gives
  \begin{linenomath}\begin{equation*}
    r \cdot \cardbig{N_{st}^{(1)}(u)}
    \ \ge\
    b_{st}
    \ >\
    (r-1) \cdot \left(  \cardbig{N_{st}^{(1)}(u)} - 2rl  \right)
  \end{equation*}\end{linenomath}
  and so
  \begin{equation}\label{EQ:strictlyErg-uniqueErg-equivCond4}
    0
    \ \le\
    r \cdot \cardbig{N_{st}^{(1)}(u)}  -  b_{st}
    \ <\
    \cardbig{N_{st}^{(1)}(u)} + 2(r-1)rl
    \ <\
    (t+2r^2)l.
  \end{equation}
  Since $\cardbig{N_{st}^{(1)}}=tl\tau_{st}^{(1)}$, $\cardbig{N_{st}^{(l)}}=t\tau_{st}^{(l)}$,
  and $\cardbig{N_{v}^{(1)}}=rl\tau_{v}^{(1)}$ for $v\in\AAa_p^{rl}$,
  dividing (\ref{EQ:strictlyErg-uniqueErg-equivCond4})
  by $trl$ and using (\ref{EQ:strictlyErg-uniqueErg-equivCond3})
  gives (\ref{EQ:strictlyErg-uniqueErg-equivCond1}).

 % \medskip
  Now take any $\eps>0$. Let $j'\ge j$ be such that $l_{j'}/l > 1/\eps$; put $r=l_{j'}/l$ and
  $\eps'=\eps/\cardbig{\AAa_p^{rl}}$.
  By the assumption, for every word $v\in \AAa_p^{rl}$ the set $N_x^{(rl)}(v)$ is uniform Ces\`aro;
  put $d_v=l\cdot d\big(N_x^{(rl)}(v)\big)$.
  Thus, by (\ref{EQ:unifCesaro-def}),  we can find $j''>j'$ such that
  $\abs{\tau_{st}^{(l)}(v) - d_v} < \eps'$ for every $t\ge l_{j''}/l$ and every $v\in\AAa_p^{rl}$.
  We may assume that $j''$ is so large that $(2r/t)<\eps$.
  Put $d=\sum_{v\in\AAa_p^{rl}} d_v\tau_v^{(1)}(u)$.
  Then
  \begin{linenomath}\begin{equation*}
    \absbig{\sum_{v\in\AAa_p^{rl}}
       \left(
          \tau_{st}^{(l)}(v) \tau_v^{(1)}(u) -  d_v\tau_v^{(1)}(u)
       \right)}
    < \eps' \sum_{v\in\AAa_p^{rl}} \tau_v^{(1)}(u)
    \le \eps,
  \end{equation*}\end{linenomath}
  and (\ref{EQ:strictlyErg-uniqueErg-equivCond1}) gives
  \begin{linenomath}\begin{equation*}
    \absbig{\tau_{st}^{(1)}(u) - d}
    < 3\eps.
  \end{equation*}\end{linenomath}
  This is true for every sufficiently large $t$ and so, by (\ref{EQ:unifCesaro-def2}),
  the set $N_x^{(1)}(u)=\{i: x[i,i+\len{u})=u\}$
  is uniform Ces\`aro with density $d(A)=d$.
  Since $u$ was arbitrary,
  Lemma~\ref{L:uniqErgod-unifCesaro} yields strict ergodicity of
  the subshift $(\closure\orbit_\sigma(x),\sigma)$.
\end{proof}

%%%%%%%%%%%%%%%%%%%%%%%%%%%%%%%%%%%%%%%%%%%%%%%%%%%%%%%%%%%%%%%%%%%%%%%%%%%%%%%
%%%%%%%%%%%%%%%%%%%%%%%%%%%%%%%%%%%%%%%%%%%%%%%%%%%%%%%%%%%%%%%%%%%%%%%%%%%%%%%
%%%%%%%%%%%%%%%%%%%%%%%%%%%%%%%%%%%%%%%%%%%%%%%%%%%%%%%%%%%%%%%%%%%%%%%%%%%%%%%
%%%%%%%%%%%%%%%%%%%%%%%%%%%%%%%%%%%%%%%%%%%%%%%%%%%%%%%%%%%%%%%%%%%%%%%%%%%%%%%
%%%%%%%%%%%%%%%%%%%%%%%%%%%%%%%%%%%%%%%%%%%%%%%%%%%%%%%%%%%%%%%%%%%%%%%%%%%%%%%
\section{Proof of Theorem~\refThmC}\label{S:strictlyErgExample}

In this section we show that Theorem~\refThmB{} cannot be generalized to arbitrary
dynamical system. We construct a strictly ergodic system for which
local correlation entropy of every point is zero,
but the topological entropy is positive. The construction is a modification of that from
\cite[pp.~327--329]{grillenberger1973constructions}.

Fix an integer $p\ge 3$ and
take the alphabet $\AAa=\AAa_p=\{0,\dots,p-1\}$. Recall that
$\AAa^*=\bigcup_{m\ge 0} \AAa^m$ denotes
the set of all words over $\AAa$.
If $w,v$ are words, their concatenation is denoted by $wv$. Further,
for a word $w$ and a positive integer $n$, the concatenation $ww\dots w$ ($n$-times)
is denoted by $w^n$.

For $n\ge 1$ denote by $\PPp_{n}$ the set of all permutations
$\pi$ of $\{1,\dots,n\}$.
Write $\PPp_{n} = \{\pi_1^{(n)},\dots,\pi_{n!}^{(n)} \}$,
where $\pi_1^{(n)}$ denotes the identity.
For words $w_1,\dots,w_n\in\AAa^*$ and
$\pi\in\PPp_{n}$ define
\begin{linenomath}\begin{equation*}
  \pi(w_1, w_2,\dots, w_n) = w_{\pi(1)} w_{\pi(2)} \dots w_{\pi(n)}   \ \in\ \AAa^*.
\end{equation*}\end{linenomath}

Let $M=\{w_1<w_2<\dots<w_n\}$ be an ordered set of words over $\AAa$
(the order of $M$ need not be lexicographical) such that
the lengths $\len{w_i}$ are the same; denote their common value by $l(M)$.
For $r\ge 0$ let $M^{(r)}$ be the ordered set
\begin{equation}\label{EQ:Mr-order}
  M^{(r)}
  = \{ w_1^{(r)} < \dots < w_{n!} ^{(r)} \},
  \qquad\text{where}\quad
  w_j^{(r)} = w_1^r \,\pi_j^{n}(w_1, w_2, \dots, w_n).
\end{equation}
Note that the words $w_j^{(r)}$ are pairwise distinct, the length of every $w_j^{(r)}$ is
$l(M^{(r)}) = \big(\card{M}+r\big)\cdot l(M)$,
and the cardinality of $M^{(r)}$ is
$\cardbig{M^{(r)}} = \big(\card{M}\big)!$.
Further, $w_1^{(r)}$ starts with $(r+1)$ copies of $w_1$.

Let $M_1=\{0<1<\dots<p-1\}$. Then $l(M_1)=1$ and $\card{M_1}=p$.
Put $r_1=0$. If we have defined $M_j$ and $r_j$ for $j\ge 1$, define $M_{j+1}$ and $r_{j+1}$ by
\begin{equation}\label{EQ:Mj-def}
  M_{j+1} = M_j^{(r_j)}
  \qquad\text{and}\qquad
  r_{j+1} = \big\lceil \card{M_{j+1}}/l(M_{j+1}) \big\rceil \,.
\end{equation}
For every $j$ put
\begin{equation}\label{EQ:Mj-notation}
  m_j=\card{M_j},\qquad
  l_j=l(M_j),\qquad
  \lambda_j = (1 / l_j) \log m_j;
\end{equation}
let $\bar{w}_j$ denote the first (according to the order of $M_j$) word of $M_j$.
Note that for $j=1,2$ we have
\begin{equation}\label{EQ:m2-j2}
  l_1=1,\ m_1 = p,\ r_1=0,
  \qquad
  l_2=p,\ m_2=p\,!,\ r_2=(p-1)!\,.
\end{equation}
Further, for every $j\ge 1$,
\begin{equation}\label{EQ:mj-lj}
  m_{j+1} = {m_j!}
  \qquad\text{and}\qquad
  l_{j+1} = (m_j+r_j) l_j.
\end{equation}

Let $x\in\Sigma_p=\AAa^{\NNN_0}$ be the unique sequence such that
\begin{equation}\label{EQ:strictlyErg-x-def}
  x[0,l_j)=\bar{w}_j;
\end{equation}
such $x$ exists since $\bar{w}_{j+1}$ starts with ($r_j+1$ copies of) $\bar{w}_j$;
$x$ is unique since
$l_j=\len{\bar{w}_j}\nearrow\infty$ by Lemma~\ref{L:strictlyErg-Mj-props}(c) below.
Put $X=\closure{\orbit}_\sigma(x)$.

%\medskip
The proof of Theorem~\refThmC{} goes as follows.
First, in Lemmas~\ref{L:strictlyErg-minimality} and \ref{L:strictlyErg-unique_erg}
we show that the system $(X,\sigma)$ is strictly ergodic,
which will prove (a) of the theorem.
The fact that the topological entropy is positive is given in
Lemma~\ref{L:strictlyErg-top_entropy}.
Finally,
correlation entropies of the system are described in
Lemmas~\ref{L:strictlyErg-corrEntropy_mu}
and \ref{L:strictlyErg-local_corr_entropy}.
We start by summarizing some of the properties of the constructed sets $M_j$.

\begin{lemma}\label{L:strictlyErg-Mj-props}
  The following hold:
  \begin{enumerate}
    \item[(a)] $m_j/l_j$ is an even integer  provided $j\ge 2$, and so $r_j=m_j/l_j$
      (that is, ceiling in (\ref{EQ:Mj-def}) is unnecessary);
    \item[(b)] $r_{j}>p$ provided $j\ge 3$;
    \item[(c)] $\lim_j m_j=\lim_j l_j=\lim_j r_j=\infty$;
    \item[(d)] $l_{j+1} > p l_j^2$ provided $j\ge 3$;
    \item[(e)] $\sum_{j\ge 4} (1/l_j) < 1/(p l_3^2-1)$ and $l_3=(p+1)!$.
  \end{enumerate}
\end{lemma}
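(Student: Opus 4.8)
The plan is to derive everything from the basic recursions that the construction (\ref{EQ:Mj-def}) and (\ref{EQ:Mj-notation}) yield, namely $m_{j+1}=m_j!$ and $l_{j+1}=(m_j+r_j)l_j$ together with $r_{j+1}=\lceil m_{j+1}/l_{j+1}\rceil$, and to treat the items in the order (a), then (b)--(c), then (d)--(e), since each later item feeds on the structural information extracted in (a). First I would prove (a) by induction on $j\ge 2$. The base case is the direct computation $m_2/l_2=p!/p=(p-1)!$, which is an even integer because $p\ge 3$ forces the factor $2$ into $(p-1)!$; hence $r_2=(p-1)!$ and the ceiling is unnecessary. For the inductive step I would use the hypothesis $m_j=r_jl_j$ to simplify the length recursion,
\begin{equation*}
  l_{j+1}=(m_j+r_j)l_j=r_j(l_j+1)l_j=(l_j+1)\,m_j,
\end{equation*}
which produces the clean formula $r_{j+1}=m_{j+1}/l_{j+1}=m_j!/\big((l_j+1)m_j\big)=(m_j-1)!/(l_j+1)$.

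The main obstacle is to see that this quotient is an \emph{even integer}. Here I would note that the inductive hypothesis makes $r_j$ a positive even integer, so $r_j\ge 2$, while $l_j\ge l_2=p\ge 3$; hence $m_j=r_jl_j\ge 2l_j\ge l_j+3$, giving $l_j+1\le m_j-1$. Consequently $(m_j-1)!/(l_j+1)$ equals exactly the product of all integers in $\{1,\dots,m_j-1\}$ with the single value $l_j+1$ deleted. Since $m_j-1\ge 5$, the set $\{1,\dots,m_j-1\}$ contains at least two even numbers, so removing $l_j+1$ leaves at least one; therefore the product is even. This proves (a) and, as a byproduct, gives the identity $r_{j+1}=(m_j-1)!/(l_j+1)$ valid for every $j\ge 2$.

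With (a) available, (b) and (c) are short. From $l_j+1\le m_j-1$ I would estimate $r_{j+1}=(m_j-1)!/(l_j+1)\ge (l_j+1)!/(l_j+1)=l_j!\ge p!>p$, which is (b). For (c): $m_{j+1}=m_j!>m_j$ (as $m_j\ge 3$) forces $m_j\to\infty$; then $l_{j+1}\ge m_jl_j$ forces $l_j\to\infty$; and $r_{j+1}\ge l_j!$ forces $r_j\to\infty$.

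Finally, for (d) I would write $l_{j+1}=(m_j+r_j)l_j>m_jl_j=r_jl_j^2$ and invoke $r_j>p$ (valid for $j\ge 3$ by (b)) to conclude $l_{j+1}>pl_j^2$. For (e), the value $l_3=(p+1)!$ is the direct evaluation $l_3=(m_2+r_2)l_2=\big(p!+(p-1)!\big)p=(p-1)!(p+1)p=(p+1)!$. Putting $b=pl_3^2$, item (d) gives $l_4>b$ and $l_{j+1}>pl_j^2\ge l_j^2$ for $j\ge 4$, so an easy induction yields $l_{4+k}>b^{2^k}$; hence
\begin{equation*}
  \sum_{j\ge 4}\frac{1}{l_j}<\sum_{k\ge 0}\frac{1}{b^{2^k}}<\sum_{n\ge 1}\frac{1}{b^{n}}=\frac{1}{b-1}=\frac{1}{pl_3^2-1},
\end{equation*}
the strict inequality coming from the omitted terms such as $1/b^{3}$. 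I expect the only genuinely delicate point to be the evenness argument in (a); the remaining items are a mechanical unwinding of the recursions.
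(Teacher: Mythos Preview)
Your proof is correct and follows essentially the same route as the paper: both derive the key identity $m_{j+1}/l_{j+1}=(m_j-1)!/(l_j+1)$, establish $l_j+1\le m_j-2$ to conclude integrality and evenness, and then read off (b)--(e) from the recursions. The only notable difference is in how that inequality is obtained: the paper proves the auxiliary bound $l_j\le m_j-p$ by a separate induction, whereas you get $m_j\ge 2l_j\ge l_j+3$ directly from the inductive hypothesis that $r_j$ is even---a slightly more economical argument.
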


\begin{proof}
  (a)--(c)
  Immediately from the construction we have
  \begin{equation}\label{EQ:strictlyErg-mj_lj-increasing}
    1=l_1, \ p = l_2 < l_3 < \dots,
    \qquad
    3\le p=m_1 < m_2 < m_3 < \dots
  \end{equation}
  and
  \begin{equation}\label{EQ:strictlyErg-lj1<=2mjlj}
    l_{j+1}\le 2m_jl_j
    \qquad
    \text{for every }
    j\ge 1
  \end{equation}
  (the last inequality follows from $r_{j}\le m_{j}$, see (\ref{EQ:Mj-def})).
  Further, we claim that
  \begin{equation}\label{EQ:strictlyErg-lj<=mj-p}
    l_j\le m_j-p
    \qquad
    \text{for every }
    j\ge 2.
  \end{equation}
  Indeed, this is true for $j=2$ since $p\le p\,!-p$ (recall that $p\ge 3$).
  Assume that (\ref{EQ:strictlyErg-lj<=mj-p}) is true for some $j\ge 2$.
  By (\ref{EQ:strictlyErg-mj_lj-increasing}) , $2<l_j\le m_j-p<m_j-1$.
  Thus $m_{j+1}=m_j! > m_j(m_j-1)(m_j-p) 2 \ge (m_j-1)\cdot (2m_j l_j) > (p-1)l_{j+1}$. Now
  $m_{j+1}-l_{j+1} > (p-2) l_{j+1} > p$ and (\ref{EQ:strictlyErg-lj<=mj-p})
  is true also for $(j+1)$.

  By (\ref{EQ:m2-j2}), $m_1/l_1=p$ and $m_2/l_2=(p-1)!$ are integers.
  Assume now that $m_j/l_j$ is an integer for some $j\ge 2$.
  Then, by (\ref{EQ:mj-lj}) and the fact that $r_j=m_j/l_j$,
  \begin{linenomath}\begin{equation*}
    \frac{m_{j+1}}{l_{j+1}}
    = \frac{m_j!}{(m_j+r_j)l_j}
    = \frac{(m_j-1)!}{l_j+1}\,.
  \end{equation*}\end{linenomath}
  By (\ref{EQ:strictlyErg-lj<=mj-p}), this is an even integer, which is greater than $(m_j-2)!$.
  Thus (a) is proved and, since $m_j\ge m_2 = p\,!$ and $(m_2-2)! \ge m_2-2=p\,!-2\ge 2p-2>p$,
  also (b) is proved.
  Further, $\lim m_j=\lim l_j=\infty$ since these sequences are strictly monotone by
  (\ref{EQ:strictlyErg-mj_lj-increasing}), and $\lim r_j=\infty$ since, as we have just proved,
  $r_{j+1}> (m_j-2)!$ for $j\ge 2$. Thus we have (c).

  (d) By (\ref{EQ:mj-lj}), (a), and (b),
  $l_{j+1}=(m_j+r_j)l_j \ge m_j l_j = r_j l_j^2 > p l_j^2$ for $j\ge 3$.

  (e)
  A simple induction using (d) gives
  $l_{j+3} > p^{2^{j}-1} l_3^{2^j}\ge (p l_3^2)^j$ for every $j\ge 1$.
  Hence $\sum_{j\ge 4} (1/l_j) < 1/(pl_3^2-1)$. Since $l_3=(p\,! + (p-1)!) p=(p+1)!$ by
  (\ref{EQ:mj-lj}) and (\ref{EQ:m2-j2}), (e) is proved.
\end{proof}

%%%%%%%%%%%%%%%%%%%%%%%%%%%%%%%%%%%%%%%%%%%%%%%%%%%%%%%%%%%%%%%%%%%%%%%%%%%%%%%
%%%%%%%%%%%%%%%%%%%%%%%%%%%%%%%%%%%%%%%%%%%%%%%%%%%%%%%%%%%%%%%%%%%%%%%%%%%%%%%
\subsection{Strict ergodicity}
\begin{lemma}\label{L:strictlyErg-minimality}
  The subshift $(X,\sigma)$ is minimal.
\end{lemma}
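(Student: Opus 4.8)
The plan is to prove that $x$ is almost periodic, i.e.\ that every word occurring in $x$ occurs in it syndetically (with bounded gaps); since $X=\closure\orbit_\sigma(x)$, this is equivalent to minimality of $(X,\sigma)$. The whole argument rests on the hierarchical block structure of $x$. First I would record that, for every $k\ge 1$, the sequence $x$ is an \emph{aligned} concatenation of words of $M_k$: that is, $x[il_k,(i+1)l_k)\in M_k$ for every $i\ge 0$. This follows by induction from (\ref{EQ:strictlyErg-x-def}) and (\ref{EQ:Mj-def}), because $\bar w_K$ decomposes into $M_k$-blocks for every $K>k$ and these decompositions are consistent as $K\to\infty$ (the decomposition of $\bar w_K$ restricted to $[0,l_{K-1})$ is that of $\bar w_{K-1}$).

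Next I would establish the completeness property built into the construction: every word of $M_{k+1}$ contains, at aligned positions, every word of $M_k$. Indeed each $W\in M_{k+1}$ has the form $\bar w_k^{\,r_k}\,w_{\pi(1)}\cdots w_{\pi(m_k)}$ for a permutation $\pi$, and the permuted part lists every element of $M_k$ exactly once. Combined with the decomposition of the previous paragraph (applied at level $k+1$), this shows that every word of $M_k$ occurs inside every aligned $M_{k+1}$-block of $x$; in particular every window of $x$ of length $2l_{k+1}$ contains a full $M_{k+1}$-block and hence contains every word of $M_k$. Thus each individual word of $M_k$ occurs in $x$ with gaps at most $2l_{k+1}$.

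Finally I would treat an arbitrary word $u$ occurring in $x$. Choosing $j\ge 2$ with $l_j>\len u$, any occurrence of $u$ lies inside two consecutive aligned $M_j$-blocks, so $u$ is a subword of $ab$ for some ordered pair $(a,b)$ of $M_j$-words occurring consecutively in $x$. The crux is to realise this pair inside a single word of $M_{j+1}$: inspecting the pairs of consecutive $M_j$-blocks that can occur, namely $(\bar w_j,\bar w_j)$ (available because $r_j\ge 2$ for $j\ge 2$), the pairs $(w_s,w_t)$ with $s\ne t$ occurring in the permuted part of some $W_\pi=\bar w_j^{\,r_j}w_{\pi(1)}\cdots w_{\pi(m_j)}$, and the boundary pairs $(w_{\pi(m_j)},\bar w_j)$ (which are again of one of these forms), one sees that each of them already appears inside the permuted part of some $M_{j+1}$-word; here it is essential that all $m_j!$ permutations are used, so that every $M_{j+1}$-word actually occurs in $x$. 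Hence $ab$, and therefore $u$, is a subword of some $W\in M_{j+1}$, and by the previous paragraph $W$ occurs in every window of length $2l_{j+2}$. Consequently $u$ occurs in $x$ with gaps at most $2l_{j+2}$, so $x$ is almost periodic and $(X,\sigma)$ is minimal. The main obstacle is precisely this pair-realisation step: it is where the enumeration of all permutations in (\ref{EQ:Mr-order}) is used in an essential way, and one must check that the boundary pairs between consecutive blocks do not introduce patterns escaping the single-block analysis.
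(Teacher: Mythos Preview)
Your argument is correct, but it works considerably harder than necessary. The paper's proof exploits one additional observation that bypasses your pair-realisation step entirely: since $\bar w_j=x[0,l_j)$ is a \emph{prefix} of $x$ and $l_j\to\infty$, any word $u$ occurring in $x$ is automatically a subword of $\bar w_j$ for all sufficiently large $j$. Because $\bar w_j$ is itself a member of $M_j$, it occurs (by the very completeness property you isolate) inside every word of $M_{j+1}$; hence so does $u$. As $x$ is a concatenation of $M_{j+1}$-words, $u$ occurs with gap at most $2l_{j+1}$, and minimality follows.

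The difference is that you locate $u$ at an arbitrary position in $x$ and must then argue that the two-block pattern $ab$ straddled by $u$ reappears inside a single $M_{j+1}$-word, which forces the case analysis of adjacent pairs (and the appeal to $r_j\ge 2$ and to the use of \emph{all} permutations). The paper instead locates $u$ once and for all inside the special prefix block $\bar w_j$, so a single $M_j$-word already contains $u$ and the pair analysis never arises. Your route has the virtue of being more robust---it would survive constructions in which the leading block is not privileged---but for this particular construction the prefix trick gives a two-line proof.
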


\begin{proof}
  Take any word $u$ which occurs in $x$. Then there is $j$ such that
  $u$ occurs in $\bar{w}_j$. By the construction,
  $u$ occurs in every word from $M_{j+1}$. Since $x$ is a concatenation
  of words from $M_{j+1}$, we have that $x$ is almost periodic and $(X,\sigma)$ is minimal.
\end{proof}

\begin{lemma}\label{L:strictlyErg-Aw-unifCesaro}
  For every integer $j\ge 1$ and every word $v\in M_j$, the set
  \begin{linenomath}\begin{equation*}
    N_x^{(l_j)}(v)
    \ =\  \{i\in\NNN_0:\ x[i l_j,(i+1) l_j) = v\}
  \end{equation*}\end{linenomath}
  is uniform Ces\`aro with density
  \begin{linenomath}\begin{equation*}
    d\left(N_x^{(l_j)}(v) \right) = (1/l_{j+1})\cdot
    \begin{cases}
      (r_j+1)  &\text{if } w=\bar{w}_j;
    \\
      1  &\text{otherwise}.
    \end{cases}
  \end{equation*}\end{linenomath}
\end{lemma}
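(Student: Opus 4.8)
The plan is to exploit the self-similar block structure of $x$ and to reduce the whole statement to one permutation-independent count inside a single word of $M_{j+1}$. The first step is to show that, for the fixed $j$, the point $x$ is an infinite concatenation of words of $M_{j+1}$, with the blocks sitting at the positions $0,l_{j+1},2l_{j+1},\dots$. By (\ref{EQ:Mr-order}) and (\ref{EQ:Mj-def}), each word of $M_{k+1}$ is a concatenation of words of $M_k$; iterating this from any level $k\ge j+1$ down to level $j+1$ exhibits $\bar w_k$ as a concatenation of $M_{j+1}$-words aligned at multiples of $l_{j+1}$ (the divisibility $l_{j+1}\mid l_k$ from (\ref{EQ:mj-lj}) makes the alignment consistent). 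Since $\bar w_{k+1}$ begins with $\bar w_k$, these tilings are nested, and passing to the limit $x=\lim_k\bar w_k$ (\ref{EQ:strictlyErg-x-def}) gives the aligned $M_{j+1}$-tiling of all of $x$.

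The combinatorial heart is a direct reading of (\ref{EQ:Mr-order}): every $w\in M_{j+1}=M_j^{(r_j)}$ equals $\bar w_j^{\,r_j}\,\pi(w_1,\dots,w_{m_j})$, where $w_1=\bar w_j,\dots,w_{m_j}$ are the words of $M_j$ and $\pi$ is a permutation. Cutting $w$ into its consecutive length-$l_j$ subblocks, the first $r_j$ are all $\bar w_j$ while the remaining $m_j$ exhaust $M_j$ exactly once. Hence the number of length-$l_j$ subblocks of $w$ equal to a fixed $v\in M_j$ is $r_j+1$ when $v=\bar w_j$ and $1$ otherwise; denote it $c(v)$. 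The decisive feature, which is what produces uniformity, is that $c(v)$ is independent of $\pi$, hence identical for every word of $M_{j+1}$.

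Combining the two steps, I would bound the block-frequency $\tau_{x[sl_j,(s+n)l_j)}^{(l_j)}(v)$ for an arbitrary offset $s$: the window of $n$ consecutive $l_j$-blocks meets a contiguous run of $M_{j+1}$-blocks, each contributing exactly $c(v)$ occurrences of $v$, with at most two incomplete end blocks contributing an error of at most $2(m_j+r_j)$ subblocks. Thus $\tau_{x[sl_j,(s+n)l_j)}^{(l_j)}(v)=c(v)/(m_j+r_j)+O\big((m_j+r_j)/n\big)\to c(v)/(m_j+r_j)$ as $n\to\infty$, uniformly in $s$ and with a limit independent of $s$. By the criterion preceding (\ref{EQ:uniqErgod-density-Nxl}), $N_x^{(l_j)}(v)$ is therefore uniform Ces\`aro with density $(1/l_j)\cdot c(v)/(m_j+r_j)=c(v)/l_{j+1}$, using $l_{j+1}=(m_j+r_j)l_j$; this is precisely the asserted formula. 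The only step demanding real care is the first: one must check that the nested concatenations yield a genuinely $l_{j+1}$-aligned tiling rather than one at a coarser or shifted scale, and that this alignment survives the passage to the limit. Once that is secured, the second step is an immediate reading of the construction and the third needs only the trivial boundary estimate.
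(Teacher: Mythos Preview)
Your proposal is correct and follows essentially the same approach as the paper: both arguments rest on the observation that every word of $M_{j+1}$, when cut into its $h=l_{j+1}/l_j$ aligned $l_j$-subblocks, contains $v$ exactly $c(v)$ times independently of which permutation was used, and then pass from this block count to the uniform Ces\`aro property. The only cosmetic difference is that the paper invokes the equivalent criterion (\ref{EQ:unifCesaro-def2}) with $l=h$ (so that each window is a union of complete $M_{j+1}$-blocks and no boundary error arises), whereas you handle arbitrary offsets directly and absorb the two incomplete end blocks into an $O((m_j+r_j)/n)$ term; both routes are equally valid and equally short.
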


\begin{proof}
  Fix $j\ge 1$ and $v\in M_j$.
  Take arbitrary $u\in M_{j+1}$ and
  write $u={u}_0{u}_1 \dots {u}_{h-1}$, where $h=l_{j+1}/l_j$ and ${u}_i\in M_j$.
  Put $\bar\tau_u(v) = (1/h)\cdot\cardbig{\{i:\ {u}_i=v\}}=\tau_u^{(l_j)}(v)$.
  Then, by the construction,
  \begin{linenomath}\begin{equation*}
    \bar\tau_u(v) = (1/h)\cdot
    \begin{cases}
      (r_j+1)  &\text{if } v=\bar{w}_j;
    \\
      1  &\text{otherwise}
    \end{cases}
  \end{equation*}\end{linenomath}
  does not depend on $u$.
  Hence the cardinality of $N_x^{(l_j)}(v)\cap[il_{j+1}, (i+1)l_{j+1})$
  does not depend on $i$, and is equal to $(r_j+1)$ if $v=\bar{w}_j$
  and to $1$ otherwise. By (\ref{EQ:unifCesaro-def2}) and (\ref{EQ:uniqErgod-density-Nxl})
  the lemma follows.
\end{proof}

\begin{lemma}\label{L:strictlyErg-unique_erg}
  The subshift $(X,\sigma)$ is strictly ergodic.
\end{lemma}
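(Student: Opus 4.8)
The plan is to deduce strict ergodicity directly from Lemma~\ref{L:strictlyErg-uniqueErg-equivCond}, applied to the sequence $(l_j)_{j\ge1}$ coming from the construction. Three hypotheses must be checked. First, $x$ must be almost periodic: this was already established inside the proof of Lemma~\ref{L:strictlyErg-minimality}, where minimality of $(X,\sigma)$ was derived precisely from almost periodicity of $x$, so I would simply cite it. Second, $(l_j)$ must be an increasing sequence of positive integers with each $l_{j+1}$ a multiple of $l_j$; this is immediate from (\ref{EQ:mj-lj}), which gives $l_{j+1}=(m_j+r_j)l_j$, so $l_{j+1}$ is a positive integer multiple of $l_j$ and strictly larger than $l_j$ by (\ref{EQ:strictlyErg-mj_lj-increasing}).

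The crux is the third hypothesis: for every $j\ge1$ and \emph{every} word $v$ of length $l_j$ (not merely every $v\in M_j$), the set $N_x^{(l_j)}(v)$ must be uniform Ces\`aro. For $v\in M_j$ this is exactly Lemma~\ref{L:strictlyErg-Aw-unifCesaro}. The remaining words are handled by the structural observation that, for every $i\ge 0$, the block $x[il_j,(i+1)l_j)$ is itself a word of $M_j$. Indeed, by (\ref{EQ:strictlyErg-x-def}) we have $x[0,l_{j+k})=\bar{w}_{j+k}$ for all $k\ge0$, and by the construction every word of $M_{j+k}$ is a concatenation of words of $M_j$ aligned to multiples of $l_j$ (each of the defining blocks in (\ref{EQ:Mr-order}), iterated down from level $j+k$ to level $j$, is a word of $M_j$); letting $k\to\infty$ shows that each length-$l_j$ block of $x$ at a position divisible by $l_j$ lies in $M_j$. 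Consequently, if $v$ is an $l_j$-word with $v\notin M_j$, then $N_x^{(l_j)}(v)=\emptyset$, which is trivially uniform Ces\`aro with density $0$.

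Having verified all hypotheses, Lemma~\ref{L:strictlyErg-uniqueErg-equivCond} yields that $(\closure\orbit_\sigma(x),\sigma)=(X,\sigma)$ is strictly ergodic. I expect the only step that is not pure bookkeeping to be the alignment argument above, namely confirming that no length-$l_j$ word outside $M_j$ can ever appear at a multiple-of-$l_j$ position; once that is in hand, everything else is a direct citation of Lemmas~\ref{L:strictlyErg-minimality}, \ref{L:strictlyErg-Aw-unifCesaro}, and \ref{L:strictlyErg-uniqueErg-equivCond}.
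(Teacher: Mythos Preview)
Your proposal is correct and follows essentially the same route as the paper's proof, which likewise applies Lemma~\ref{L:strictlyErg-uniqueErg-equivCond}, cites Lemma~\ref{L:strictlyErg-Aw-unifCesaro} for $v\in M_j$, and notes that $N_x^{(l_j)}(v)=\emptyset$ for $v\notin M_j$. The only difference is that you spell out the alignment argument and the hypotheses on $(l_j)_j$ more carefully than the paper, which simply asserts them.
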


\begin{proof}
  This immediately follows from Lemma~\ref{L:strictlyErg-uniqueErg-equivCond}.
  In fact, take any integer $j\ge 1$ and any $l_j$-word $v$.
  If $v\in M_j$, then $N_x^{(l_j)}(v)$ is uniform Ces\`aro
  by Lemma~\ref{L:strictlyErg-Aw-unifCesaro}.
  Otherwise $N_x^{(l_j)}(v)$ is empty, so again it is uniform Ces\`aro.
  Thus $(X,\sigma)$ is strictly
  ergodic by Lemmas~\ref{L:strictlyErg-uniqueErg-equivCond} and \ref{L:strictlyErg-minimality}.
\end{proof}

%%%%%%%%%%%%%%%%%%%%%%%%%%%%%%%%%%%%%%%%%%%%%%%%%%%%%%%%%%%%%%%%%%%%%%%%%%%%%%%
%%%%%%%%%%%%%%%%%%%%%%%%%%%%%%%%%%%%%%%%%%%%%%%%%%%%%%%%%%%%%%%%%%%%%%%%%%%%%%%
\subsection{Positive topological entropy}
Here we prove that the constructed subshift $(X,\sigma)$ has positive topological entropy.
We start with the crucial fact concerning the sequence $(\lambda_j)_j$, the
proof of which is a modification of that from \cite[Lemma~2.1]{grillenberger1973constructions}.

\begin{lemma}\label{L:strictlyErg-lambdaj_props}
  Let $p\ge 3$. Then the sequence $(\lambda_j)_{j\ge 1}$ is decreasing and
  the limit of it is positive.
\end{lemma}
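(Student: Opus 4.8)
The plan is to control the one-step ratio $\lambda_{j+1}/\lambda_j$ from both sides. From the recurrences (\ref{EQ:mj-lj}) we have $\log m_{j+1}=\log(m_j!)$ and $l_{j+1}=(m_j+r_j)l_j$, so, recalling $\lambda_j=(\log m_j)/l_j$ from (\ref{EQ:Mj-notation}),
\begin{equation*}
  \lambda_{j+1} = \frac{\log(m_j!)}{(m_j+r_j)l_j}
  \qquad\text{and}\qquad
  \frac{\lambda_{j+1}}{\lambda_j} = \frac{\log(m_j!)}{(m_j+r_j)\log m_j}.
\end{equation*}
For the monotonicity statement I would use the crude bound $\log(m_j!)=\sum_{k=1}^{m_j}\log k < m_j\log m_j$ (strict since $m_j\ge 2$) together with $m_j+r_j\ge m_j$; this gives $\lambda_{j+1}/\lambda_j<1$ for every $j\ge 1$, so $(\lambda_j)_j$ is strictly decreasing. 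In particular the limit $\lambda_\infty=\lim_j\lambda_j$ exists and is nonnegative, and the harder remaining task is to show $\lambda_\infty>0$.

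For the lower estimate I would first record the opposite factorial bound $\log(m_j!)\ge m_j(\log m_j-1)$, obtained by comparing the sum with $\int_1^{m_j}\log t\,dt$. For $j\ge 2$, Lemma~\ref{L:strictlyErg-Mj-props}(a) gives $r_j=m_j/l_j$, whence $m_j+r_j=m_j(l_j+1)/l_j$ and therefore
\begin{equation*}
  \lambda_{j+1}
  = \frac{\log(m_j!)}{(m_j+r_j)l_j}
  \ge \frac{m_j(\log m_j-1)}{m_j(l_j+1)}
  = \frac{\log m_j-1}{l_j+1}.
\end{equation*}
Dividing by $\lambda_j$ yields
\begin{equation*}
  \frac{\lambda_{j+1}}{\lambda_j}
  \ge \frac{l_j}{l_j+1}\left(1-\frac{1}{\log m_j}\right)
  \qquad (j\ge 2),
\end{equation*}
and each factor on the right is strictly positive because $m_j\ge p\ge 3>e$ forces $\log m_j>1$.

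Finally, telescoping gives $\lambda_\infty=\lambda_2\prod_{j\ge 2}(\lambda_{j+1}/\lambda_j)$ with $\lambda_2>0$, so by the previous display it suffices to show that the infinite product $\prod_{j\ge 2}\frac{l_j}{l_j+1}\big(1-\tfrac{1}{\log m_j}\big)$ converges to a positive number; equivalently, that $\sum_{j\ge 2}\big(\tfrac{1}{l_j}+\tfrac{1}{\log m_j}\big)<\infty$. The first series converges by Lemma~\ref{L:strictlyErg-Mj-props}(e). For the second I would use $\log m_{j+1}=\log(m_j!)\ge m_j(\log m_j-1)$ together with $m_j\to\infty$ (Lemma~\ref{L:strictlyErg-Mj-props}(c)): once $m_j$ is large this forces $\log m_{j+1}\ge 3\log m_j$, so $(\log m_j)_j$ grows at least geometrically from some index on and $\sum_j 1/\log m_j<\infty$. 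Hence the product is bounded away from $0$ and $\lambda_\infty>0$.

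The main obstacle is exactly this positivity of the limit: the decreasing part is a one-line factorial estimate, but pinning down $\lambda_\infty>0$ requires the two-sided control above and, crucially, the super-exponential growth of $\log m_j$ coming from the factorial iteration $m_{j+1}=m_j!$ (giving $\sum 1/\log m_j<\infty$) combined with the fast growth of $l_j$ recorded in Lemma~\ref{L:strictlyErg-Mj-props}(e) (giving $\sum 1/l_j<\infty$), which together keep the infinite product positive.
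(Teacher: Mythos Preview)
Your argument is correct. The monotonicity step is identical to the paper's, and for positivity you use the same Stirling-type bound $\log(m_j!)\ge m_j(\log m_j-1)$. The difference is in how the telescoping is organized: the paper derives the \emph{additive} inequality $\lambda_{j+1}>\lambda_j-(1+\lambda_j)/l_j$, sums it to $\lambda\ge\lambda_j-(1+\lambda_j)S_j$ with $S_j=\sum_{k\ge j}1/l_k$, and then verifies $\lambda_j>S_j/(1-S_j)$ by a short numerical case analysis (separate checks for $p\ge5$, $p=4$, and $p=3$). You instead pass to the \emph{multiplicative} inequality $\lambda_{j+1}/\lambda_j\ge \tfrac{l_j}{l_j+1}\big(1-\tfrac{1}{\log m_j}\big)$ and show the infinite product is positive via $\sum 1/l_j<\infty$ (Lemma~\ref{L:strictlyErg-Mj-props}(e)) and $\sum 1/\log m_j<\infty$ (geometric growth of $\log m_j$ from the factorial iteration). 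Your route avoids the case-by-case numerics and makes the mechanism---that both $l_j$ and $\log m_j$ grow fast enough for the losses to be summable---more transparent; the paper's route, by contrast, uses only $\sum 1/l_j$ and pays for it with the explicit checks at small $p$.
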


\begin{proof}
  By (\ref{EQ:mj-lj}), $m_{j+1}=m_j!<m_j^{m_j}$. Hence, using that $r_j\ge 0$,
  \begin{linenomath}\begin{equation*}
    \lambda_{j+1}
    = \frac{\log m_{j+1}}{l_{j+1}}
    < \frac{m_j \log m_j}{(m_j+r_j) l_j}
    \le  \frac{\log m_j}{l_j}
    = \lambda_j.
  \end{equation*}\end{linenomath}
  Thus $(\lambda_j)_j$ is decreasing. Put $\lambda=\lim_j\lambda_j$; we are going to show that
  $\lambda>0$.
  To this end, recall Stirling's formula \cite{robbins1955remark}
  \begin{linenomath}\begin{equation*}
    n! = \sqrt{2\pi n} \left( \frac{n}{e}\right)^n  \cdot e^{\delta_n},
    \qquad\text{where}\quad
    \frac{1}{12n+1} < \delta_n < \frac{1}{12n}
    \qquad
    (n\ge 0).
  \end{equation*}\end{linenomath}
  For $n\ge 1$ this yields (using that $\delta_n>0$)
  \begin{equation}\label{EQ:Stirling2}
    \log {n!}
    > n(\log n - 1).
  \end{equation}
  So, by (\ref{EQ:mj-lj}),
  $l_{j+1}\lambda_{j+1}= \log m_{j+1} \ > \ m_j (\log m_j - 1)$.
  Dividing by $l_{j+1}=(m_j+r_j)l_j$, using (\ref{EQ:Mj-notation})
  and the facts that $\log m_j>0$ (recall that $\log$ means the natural logarithm)
  and $r_j=m_j/l_j>0$ for $j\ge 2$, we have
  \begin{linenomath}\begin{equation*}
    \lambda_{j+1}
    \ > \
    \frac{m_j (\log m_j -1)}{(m_j+r_j)l_j}
    \ > \
    \frac{\log m_j}{l_j}
    - \frac{r_j\log m_j}{m_j l_j}
    - \frac{1}{l_j}
  \end{equation*}\end{linenomath}
  for $j\ge 2$. Hence
  \begin{equation}\label{EQ:lambda-proof0}
    \lambda_{j+1}
    \ > \
    \lambda_j
    - \frac{1+\lambda_j}{l_j}
    \qquad
    \text{for every } j\ge 1
  \end{equation}
  (the inequality is trivial for $j=1$ since $l_1=1$ and $\lambda_2>0$).
  By monotonicity of $(\lambda_j)_j$, (\ref{EQ:lambda-proof0}) implies that, for $j\ge 1$,
  \begin{equation}\label{EQ:lambda-proof2}
    \lambda
    \ge
    \lambda_j - (\lambda_j+1) S_j,
    \qquad\text{where}\quad
    S_j=\sum_{k\ge j} \frac{1}{l_k}\,.
  \end{equation}
  We want to prove that $\lambda_j > S_j/(1-S_j)$ for some $j$;
  this fact together with (\ref{EQ:lambda-proof2}) will imply $\lambda >0$.

  Assume first that $p\ge 4$. We claim that
  \begin{equation}\label{EQ:lambda-proof-S2}
    S_2
    <   \frac{1}{p}    + \frac{1}{p(p^2-1)}
    \qquad\text{and}\qquad
    \frac{S_2}{1-S_2} < \frac{1}{p-2}
    \,.
  \end{equation}
  Indeed, by Lemma~\ref{L:strictlyErg-Mj-props}(e) and (\ref{EQ:m2-j2}),
  $S_2<1/p + 1/l_3 + 1/(pl_3^2-1)$ and $l_3=(p+1)!$.
  Since $p\ge 4$, $p l_3^2-1 > l_3\ge 2(p-1)p(p+1)$, we have the first inequality from
  (\ref{EQ:lambda-proof-S2}). The second one follows from the first and the facts that the map
  $x\mapsto x/(1-x)$ is increasing on $(-\infty,1)$, and that $1/p + 1/[p(p^2-1)] < 1$.

  By (\ref{EQ:m2-j2}) and (\ref{EQ:Stirling2}), $\lambda_2 > \log p-1$.
  Since $\log p > \frac32$ for $p\ge 5$,
  $\lambda_2 > \frac12 >S_2/(1-S_2)$ by (\ref{EQ:lambda-proof-S2}).
  For $p=4$ we have $\lambda_2=\log(24)/4$. Since $\log(24)>2$, we again have
  $\lambda_2 > \frac12 >S_2/(1-S_2)$. Thus, by (\ref{EQ:lambda-proof2}), $\lambda>0$
  for every $p\ge 4$.

  It remains to describe the case $p=3$.
  By (\ref{EQ:m2-j2}) and (\ref{EQ:mj-lj}), $m_3=720$, $l_3=24$,  and $\lambda_3=\log(720)/24$.
  Since $\log(720)>6$ (use e.g.~that $e<2.8$),
  we have $\lambda_3>\frac14$. On the other hand, by Lemma~\ref{L:strictlyErg-Mj-props}(e),
  $S_3<1/l_3 + 1/(p(l_3^2-1))<2/l_3=\frac{1}{12}$ and
  $S_3/(1-S_3)<\frac{1}{11}$. Thus, for $p=3$, $\lambda_3>S_3/(1-S_3)$ and $\lambda>0$ by
  (\ref{EQ:lambda-proof2}).
\end{proof}

\begin{lemma}\label{L:strictlyErg-top_entropy}
  Let $p\ge 3$. Then the topological entropy of the subshift $(X,\sigma)$ is
  \begin{linenomath}\begin{equation*}
    h_{\topol{}}(\sigma)=\lim_{j\to\infty} \lambda_j >0.
  \end{equation*}\end{linenomath}
\end{lemma}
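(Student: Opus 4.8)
The plan is to evaluate the entropy through the word-counting formula for subshifts. Write $\theta(n)$ for the number of distinct $n$-words occurring in $x$; since $X=\closure{\orbit}_\sigma(x)$, these are exactly the $n$-words occurring in points of $X$. Because a length-$(m+n)$ word splits into a length-$m$ prefix and a length-$n$ suffix, we have $\theta(m+n)\le\theta(m)\theta(n)$, so Fekete's lemma guarantees that $\lim_{n\to\infty}(1/n)\log\theta(n)$ exists and equals $\inf_n(1/n)\log\theta(n)$. Moreover this limit is $h_\topol(\sigma)$, as one reads off Bowen's definition in \textsection\ref{S:prelim}: for $\eps\in[2^{-k},2^{-(k-1)})$ the distinct $(k+n-1)$-words of $X$ are precisely the maximal $(n,\eps)$-separated sets, whence $h_\topol(\sigma)=\lim_{\eps\to0}\limsup_n(1/n)\log\theta(n+k-1)=\lim_n(1/n)\log\theta(n)$. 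Set $\lambda=\lim_j\lambda_j$, which is positive by Lemma~\ref{L:strictlyErg-lambdaj_props}. I would then prove $h_\topol(\sigma)\le\lambda$ and $h_\topol(\sigma)\ge\lambda$ separately.

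For the upper bound, first I would record that, by the nested construction (\ref{EQ:Mj-def}), the prefix $\bar{w}_{j'}=x[0,l_{j'})$ is a concatenation of words from $M_{j+1}$ for every $j'\ge j+1$; letting $j'\to\infty$ shows that $x$ itself is a concatenation of $M_{j+1}$-words aligned at multiples of $l_{j+1}$. Consequently every $n$-word of $x$ is determined by its starting offset modulo $l_{j+1}$ (at most $l_{j+1}$ choices) together with the block of at most $\lceil n/l_{j+1}\rceil+1$ consecutive $M_{j+1}$-words it meets (at most $m_{j+1}$ choices for each). Hence $\theta(n)\le l_{j+1}\,m_{j+1}^{\lceil n/l_{j+1}\rceil+1}$, and dividing by $n$ and letting $n\to\infty$ gives $h_\topol(\sigma)\le(\log m_{j+1})/l_{j+1}=\lambda_{j+1}$. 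As this holds for every $j$, I conclude $h_\topol(\sigma)\le\inf_j\lambda_{j+1}=\lambda$.

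For the lower bound, the key observation is that $\bar{w}_{j+2}$, which is a prefix of $x$, equals $\bar{w}_{j+1}^{\,r_{j+1}}\,w_1w_2\cdots w_{m_{j+1}}$ by (\ref{EQ:Mr-order}) with the identity permutation $\pi_1$, so it contains all $m_{j+1}$ pairwise distinct words of $M_{j+1}$ as consecutive length-$l_{j+1}$ blocks. Therefore $\theta(l_{j+1})\ge m_{j+1}$. Now I would exploit the existence of the limit: along the subsequence $n_j=l_{j+1}\to\infty$ (the divergence being Lemma~\ref{L:strictlyErg-Mj-props}(c)) one has $(1/n_j)\log\theta(n_j)\ge(\log m_{j+1})/l_{j+1}=\lambda_{j+1}$, and since $(1/n)\log\theta(n)$ \emph{converges} to $h_\topol(\sigma)$, the same value is obtained along this subsequence. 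Letting $j\to\infty$ yields $h_\topol(\sigma)\ge\lim_j\lambda_{j+1}=\lambda$. Combined with the upper bound this gives $h_\topol(\sigma)=\lambda>0$.

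The step I expect to require the most care is the lower bound. A bound on $\theta$ at the single scale $l_{j+1}$ does \emph{not} by itself bound the infimum $\inf_n(1/n)\log\theta(n)$; it is only the prior knowledge that $(1/n)\log\theta(n)$ converges that lets a one-scale estimate pin down $h_\topol(\sigma)$, since every subsequential limit must then equal the full limit. The remaining work is bookkeeping: verifying that $x$ really is a clean concatenation of $M_{j+1}$-blocks (so the offset-plus-blocks counting is legitimate), and that the $m_{j+1}$ words of $M_{j+1}$ are genuinely distinct and all present in the prefix $\bar{w}_{j+2}$, both of which follow directly from the construction in (\ref{EQ:Mr-order})--(\ref{EQ:Mj-def}).
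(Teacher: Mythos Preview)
Your proof is correct and follows essentially the same route as the paper: both use the word-complexity formula $h_\topol(\sigma)=\lim_n(1/n)\log\theta(n)$, obtain the lower bound from $\theta(l_j)\ge m_j$, and obtain the upper bound by decomposing an arbitrary word of $x$ into an offset plus a bounded number of $M_j$-blocks. The only cosmetic difference is that you bound $\theta(n)$ for all $n$ via $M_{j+1}$-blocks and then let $n\to\infty$, whereas the paper bounds $\theta(l_{j+1})$ via $M_j$-blocks and passes directly to the limit in $j$; your explicit appeal to Fekete's lemma to justify evaluating the limit along the subsequence $(l_{j+1})$ is exactly what the paper uses implicitly.
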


\begin{proof}
  Put $\lambda=\lim_{j\to\infty} \lambda_j$; by Lemma~\ref{L:strictlyErg-lambdaj_props}
  the limit exists and is positive. We prove that $h_{\topol{}}(\sigma)=\lambda$;
  recall that $h_{\topol{}}(\sigma)=\lim_{n} (1/n)\log\theta_n$, where
  $\theta_n$ is the number of $n$-words in $x$
  (see e.g.~\cite[Theorem~7.13]{walters2000introduction}).
  The inequality $h_{\topol{}}(\sigma)\ge \lambda$ is trivial, since the number
  of $l_j$-words in $x$
  is greater than or equal to $m_j$. To prove the reverse inequality, take any
  $l_{j+1}$-word $v$ in $x$.
  Since $l_{j+1}=(m_j+r_j)l_j$, there are words $u_1,\dots,u_{m_j+r_j+1}$ from $M_j$
  and an integer $i\in[0,l_j)$ such that $v=(u_1\dots u_{m_j+r_j+1})[i,i+\len{v})$.
  From this fact it immediately follows that $\theta_{l_{j+1}} \le l_j m_j^{m_j+r_j+1}$. Thus
  \begin{linenomath}\begin{equation*}
    h_{\topol{}}(\sigma)
    =\lim_{j\to\infty} \frac{\log \theta_{l_{j+1}}} {l_{j+1}}
    \le \lim_{j\to\infty} \frac{\log l_j + (m_j+r_j+1)\log m_j} {(m_j+r_j)l_j}
    = \lambda.
  \end{equation*}\end{linenomath}
\end{proof}

\begin{remark}
  Since the beginning of this section we excluded the case $p=2$. Nevertheless,
  the construction can be carried over also for such $p$. The obtained subshift
  will be strictly ergodic
  (by the same reasoning as in
  Lemmas~\ref{L:strictlyErg-minimality}--\ref{L:strictlyErg-unique_erg}).
  However, the topological entropy will be zero. In fact, by (\ref{EQ:mj-lj}),
  \begin{linenomath}\begin{equation*}
    l_j=2\cdot 3^{j-2},\quad
    m_j=2,\quad
    r_j=1
    \qquad
    \text{for every }
    j\ge 2.
  \end{equation*}\end{linenomath}
  Hence, for $j\ge 2$, the number $\theta_{l_{j+1}}$ of $l_{j+1}$ words is less than or equal to
  $l_j m_j^{m_j+r_j+1} =16 l_j$ and $h_\topol{}(\sigma)=\lim_j (1/l_{j+1})\log \theta_{l_{j+1}}=0$.
\end{remark}

%%%%%%%%%%%%%%%%%%%%%%%%%%%%%%%%%%%%%%%%%%%%%%%%%%%%%%%%%%%%%%%%%%%%%%%%%%%%%%%
%%%%%%%%%%%%%%%%%%%%%%%%%%%%%%%%%%%%%%%%%%%%%%%%%%%%%%%%%%%%%%%%%%%%%%%%%%%%%%%
\subsection{Zero correlation entropy}
\begin{lemma}\label{L:strictlyErg-corrEntropy_mu}
  The correlation entropy $\entr(\sigma,\mu)$ of the unique invariant measure $\mu$
  of $(X,\sigma)$ is zero.
\end{lemma}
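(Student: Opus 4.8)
The plan is to invoke Lemma~\ref{L:subshifts-correl-entropy-measure}, which expresses the correlation entropies of $\mu$ through the quantities $\tilde\mu(m)=\sum_{w\in\AAa^m}\big(\mu([w])\big)^2$. Writing $a_m=-\log\tilde\mu(m)$, the inequality $\tilde\mu(m)\le 1$ gives $a_m\ge 0$, hence $\entrl(\sigma,\mu)\ge 0$; so it suffices to prove $\entru(\sigma,\mu)=\limsup_{m\to\infty}a_m/m=0$, i.e.\ $a_m=o(m)$. I will repeatedly use that $\tilde\mu$ is non-increasing (immediate from $\sum_{c\in\AAa}\mu([wc])^2\le\mu([w])^2$), so a lower bound on $\tilde\mu$ at any length $\ge m$ is also a lower bound at $m$. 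By unique ergodicity and Lemma~\ref{L:uniqErgod-unifCesaro}, $\mu([u])=d(N_x(u))$ is the uniform Ces\`aro frequency of $u$ in $x$; thus any lower bound on the number of block-aligned occurrences of $u$ yields a lower bound on $\mu([u])$.

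The single combinatorial input I need is that every word of $M_{j+1}$ begins with $\bar w_j^{\,r_j}$, by (\ref{EQ:Mr-order}) and (\ref{EQ:Mj-def}). Consequently $\bar w_j^{\,q}$ occurs, block-aligned, at least $r_j-q+1$ times inside each $M_{j+1}$-block, and since $M_{j+1}$-blocks have density $1/l_{j+1}$ I obtain
\begin{equation*}
  \mu\big([\bar w_j^{\,q}]\big)\ \ge\ \frac{r_j-q+1}{l_{j+1}}\qquad(1\le q\le r_j).
\end{equation*}
Together with $r_j=m_j/l_j$ (Lemma~\ref{L:strictlyErg-Mj-props}(a)) and $l_{j+1}\le 2m_jl_j$ (\ref{EQ:strictlyErg-lj1<=2mjlj}), the case $q=1$ already gives the key fact that each ``heavy word'' $\bar w_k$ has only polynomially small measure: $\mu([\bar w_k])\ge 1/(2l_k^2)$ for $k\ge 2$.

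With this in hand I would split each interval $m\in[l_j,l_{j+1})$ at $m_j/2$. For $l_j\le m\le m_j/2$ put $q=\lceil m/l_j\rceil\le r_j/2+1$; then $\tilde\mu(m)\ge\tilde\mu(ql_j)\ge\mu([\bar w_j^{\,q}])^2\ge(r_j/2)^2/l_{j+1}^2\ge 1/(16\,l_j^4)$, so $a_m/m\le(4\log l_j+\log 16)/l_j\to 0$. For $m_j/2\le m< l_{j+1}$ I use monotonicity down to $l_{j+1}$ and the heavy word $\bar w_{j+1}$: $\tilde\mu(m)\ge\tilde\mu(l_{j+1})\ge\mu([\bar w_{j+1}])^2\ge 1/(4\,l_{j+1}^4)$, whence $a_m/m\le(4\log l_{j+1}+\log 4)/(m_j/2)$. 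Since $l_j\le m_j$ (\ref{EQ:strictlyErg-lj<=mj-p}) gives $\log l_{j+1}\le\log(2m_j^2)$, and $l_j,m_j\to\infty$ (Lemma~\ref{L:strictlyErg-Mj-props}(c)), both bounds tend to $0$ as $j\to\infty$. As every large $m$ lies in some $[l_j,l_{j+1})$, this yields $\limsup_m a_m/m=0$, and hence $\entr(\sigma,\mu)=0$.

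The main obstacle is the intermediate range $m\in(\approx m_j,\ \approx l_{j+1})$, where $m$ far exceeds the longest $\bar w_j$-run (length $\approx m_j$) and yet, because the permutations distribute the measure over exponentially many length-$m$ windows, \emph{no} single word of length $m$ is heavy. The resolution is that one never needs a heavy word of the exact length $m$: passing by monotonicity to length $l_{j+1}$ and using that the heavy block $\bar w_{j+1}$ has measure only polynomially small in $l_{j+1}$ keeps $\tilde\mu(l_{j+1})$ as large as $l_{j+1}^{-4}$, while restricting this regime to $m\ge m_j/2$ makes the division by $m$ succeed thanks to $\log l_{j+1}=o(m_j)$. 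The super-exponential growth $m_{j+1}=m_j!$ is precisely what simultaneously forces the topological entropy to stay positive, so positive entropy and vanishing correlation entropy genuinely coexist.
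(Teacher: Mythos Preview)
Your proof is correct and follows essentially the same route as the paper's: both arguments establish $\mu([\bar w_j^{\,q}])\ge (r_j-q+1)/l_{j+1}$ from the $r_j$-fold repetition of $\bar w_j$ at the start of every $M_{j+1}$-block, split the range $[l_j,l_{j+1})$ at $m_j/2=(r_j/2)l_j$, and use the heavy word $\bar w_{j+1}$ (via $\mu([\bar w_{j+1}])\ge 1/(2l_{j+1}^2)$) for the upper subrange. The only cosmetic difference is that the paper bounds $\tilde\mu(n)\ge \mu([x[0,n)])^2$ directly via cylinder inclusion, whereas you first pass to length $ql_j$ by monotonicity of $\tilde\mu$; the resulting estimates and the final limit computation are the same.
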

\begin{proof}
  Recall that, by Lemma~\ref{L:uniqErgod-unifCesaro} and the choice of $x$,
  \begin{equation}\label{EQ:strictlyErg-frequency-and-mu}
    \mu\big([v]\big) = \lim_{t\to\infty} \tau_{\bar{w}_t}(v)
    \qquad
    \text{for every } v\in\AAa^*.
  \end{equation}
  We start the proof by showing that
  \begin{equation}\label{EQ:strictlyErg-corrEntropy_mu-proof1}
    \mu\big([\bar{w}_j^k]\big)
    \ge \frac{r_j-k+1}{2m_jl_j}
    \qquad\text{for every }j\ge 1
    \text{ and } 1\le k\le r_j.
  \end{equation}
  To this end, fix any $j\ge 1$ and $1\le k\le r_j$. By the construction,
  every word $u$ from $M_{j+1}$ begins with $r_j$ copies of $\bar{w}_j$.
  Hence $u\big[il_j, (i+k)l_j\big)=\bar{w}_j^k$ for every $0\le i\le (r_j-k)$.
  By (\ref{EQ:mj-lj}) and Lemma~\ref{L:strictlyErg-Mj-props}(a),
  \begin{linenomath}\begin{equation*}
    \tau_u(\bar{w}_j^k)
    \ge \frac{r_j-k+1}{l_{j+1}}
    \ge \frac{r_j-k+1}{2m_j l_j}
    \qquad\text{for every } u\in M_{j+1}
    \,.
  \end{equation*}\end{linenomath}
  Now take any $t>j$. Since $\bar{w}_t$ is a concatenation of words from $M_{j+1}$,
  we have $\tau_{\bar{w}_t}\big(\bar{w}_j^k\big)\ge (r_j-k+1)/(2m_jl_j)$ for every $u\in M_t$.
  By (\ref{EQ:strictlyErg-frequency-and-mu}), this yields
  (\ref{EQ:strictlyErg-corrEntropy_mu-proof1}).

  Recall the definition (\ref{EQ:subshifts-muk-def}) of $\tilde\mu$.
  Take any $n\in\NNN$, put $w=x[0,n)$, and find $j$ such that $l_j\le n<l_{j+1}$; we may assume
  that $j\ge 2$.
  Assume first that $n<(r_j/2)l_j$; note that, by Lemma~\ref{L:strictlyErg-Mj-props}(a),
  $r_j/2$ is an integer.
  In this case
  $\mu\big([w]\big) \ge \mu\big(\big[\bar{w}_j^{r_j/2}\big]\big) \ge (r_j/2)/(2m_jl_j)
  = 1/(4l_j^2)$
  by (\ref{EQ:strictlyErg-corrEntropy_mu-proof1}) and Lemma~\ref{L:strictlyErg-Mj-props}(a).
  Thus
  \begin{equation}\label{EQ:strictlyErg-corrEntropy_mu-proof2}
    \frac{-\log \tilde\mu(n)}{n}
    \le
    \frac{4\log l_j + 4\log 2}{l_j}
    \,.
  \end{equation}
  If $n\ge (r_j/2)l_j$ then
  $\mu\big([w]\big)\ge \mu\big([\bar{w}_{j+1}]\big) \ge 1/(2l_{j+1}^2)\ge 1/(2^3 m_j^2l_j^2)$.
  Thus
  \begin{equation}\label{EQ:strictlyErg-corrEntropy_mu-proof3}
    \frac{-\log \tilde\mu(n)}{n}
    \le
    \frac{4\log m_j + 4\log l_j + 6\log 2}{(r_j/2)l_j}
    =
    \frac{8\lambda_j}{r_j}
    + \frac{8\log l_j + 12\log 2}{r_jl_j}
    \,.
  \end{equation}
  Since the right-hand sides of  (\ref{EQ:strictlyErg-corrEntropy_mu-proof2})
  and (\ref{EQ:strictlyErg-corrEntropy_mu-proof3}) converge to zero for $j\to\infty$,
  we have that $\lim_n (-1/n)\log \tilde\mu(n)=0$.
  So $\entr(\sigma,\mu)=0$ by Lemma~\ref{L:subshifts-correl-entropy-measure}.
\end{proof}

Since the subshift $(X,\sigma)$ is strictly ergodic, Proposition~\ref{P:strong-law-uniquelyErgodic}
immediately implies the following result, which finishes the proof of Theorem~\refThmC{}.

\begin{lemma}\label{L:strictlyErg-local_corr_entropy}
  The local correlation entropy $\entr(\sigma,y)$ of every $y\in X$ is zero.
\end{lemma}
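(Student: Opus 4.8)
The statement I must prove is that the local correlation entropy $\entr(\sigma,y)$ equals zero for \emph{every} point $y\in X$, not merely for $\mu$-almost every point. The plan is to deduce this directly from the earlier machinery, since essentially all the hard work has already been done. The key observation is that $(X,\sigma)$ is strictly ergodic (Lemma~\ref{L:strictlyErg-unique_erg}), hence in particular uniquely ergodic, so Proposition~\ref{P:strong-law-uniquelyErgodic} applies with $Y=X$: for every $y\in X$ and every continuity point $\eps$ of $\ccc^\sigma_m(\mu,\cdot)$ we have $\cccl^\sigma_m(y,\eps)=\cccu^\sigma_m(y,\eps)=\ccc^\sigma_m(\mu,\eps)$. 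This is the crucial step that upgrades the almost-everywhere convergence of correlation sums to convergence at \emph{every} point, and it is exactly what distinguishes the local (pointwise) statement from the measure-theoretic one already established in Lemma~\ref{L:strictlyErg-corrEntropy_mu}.

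Given that pointwise identity, the entropies defined in (\ref{EQ:def-corel-entropy-topol}) coincide with those of the measure: first I would note that, since $(X,\sigma)$ is a subshift, by Lemma~\ref{L:subshifts-correl-entropy-measure} the correlation integral $\ccc^\sigma_m(\mu,\eps)=\tilde\mu(k+m-1)$ takes only countably many values as $\eps$ ranges over $(0,1]$ and is a step function of $\eps$; consequently it has at most countably many discontinuities, and for the dyadic thresholds $\eps=2^{-k}$ the relevant values are attained, so the continuity-point restriction in Proposition~\ref{P:strong-law-uniquelyErgodic} causes no loss when we take the limit $\eps\to0$ through a suitable sequence. Plugging $\cccl^\sigma_m(y,\eps)=\cccu^\sigma_m(y,\eps)=\ccc^\sigma_m(\mu,\eps)$ into the definitions (\ref{EQ:def-corel-entropy-topol}) of $\entru(\sigma,y)$ and $\entrl(\sigma,y)$ shows that both equal the corresponding measure-theoretic quantities $\entru(\sigma,\mu)$ and $\entrl(\sigma,\mu)$.

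Finally, Lemma~\ref{L:strictlyErg-corrEntropy_mu} gives $\entr(\sigma,\mu)=0$, which means $\entru(\sigma,\mu)=\entrl(\sigma,\mu)=0$; therefore $\entru(\sigma,y)=\entrl(\sigma,y)=0$ and so $\entr(\sigma,y)=0$ for every $y\in X$. I do not anticipate a genuine obstacle here, since the proof is almost entirely a matter of invoking the right earlier results in the right order; the one point requiring a little care is the continuity-point hypothesis in Proposition~\ref{P:strong-law-uniquelyErgodic}, but for subshifts the step-function structure of $\ccc^\sigma_m(\mu,\cdot)$ from Lemma~\ref{L:subshifts-correl-entropy-measure} makes this entirely harmless. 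In fact, given the preparatory lemmas, the whole argument collapses to a one-line application of Proposition~\ref{P:strong-law-uniquelyErgodic} together with Lemma~\ref{L:strictlyErg-corrEntropy_mu}, which is presumably why the paper states it as an immediate consequence.
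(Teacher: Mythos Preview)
Your proposal is correct and follows exactly the paper's approach: the paper's proof is literally the one-line remark that strict ergodicity (Lemma~\ref{L:strictlyErg-unique_erg}) together with Proposition~\ref{P:strong-law-uniquelyErgodic} and Lemma~\ref{L:strictlyErg-corrEntropy_mu} gives the result. Your additional care about the continuity-point hypothesis (using the step-function structure from Lemma~\ref{L:subshifts-correl-entropy-measure}) is a reasonable elaboration that the paper leaves implicit.
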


%%%%%%%%%%%%%%%%%%%%%%%%%%%%%%%%%%%%%%%%%%%%%%%%%%%%%%%%%%%%%%%%%%%%%%%%%%%%%%%
%%%%%%%%%%%%%%%%%%%%%%%%%%%%%%%%%%%%%%%%%%%%%%%%%%%%%%%%%%%%%%%%%%%%%%%%%%%%%%%
%%%%%%%%%%%%%%%%%%%%%%%%%%%%%%%%%%%%%%%%%%%%%%%%%%%%%%%%%%%%%%%%%%%%%%%%%%%%%%%
%%%%%%%%%%%%%%%%%%%%%%%%%%%%%%%%%%%%%%%%%%%%%%%%%%%%%%%%%%%%%%%%%%%%%%%%%%%%%%%
%%%%%%%%%%%%%%%%%%%%%%%%%%%%%%%%%%%%%%%%%%%%%%%%%%%%%%%%%%%%%%%%%%%%%%%%%%%%%%%
\section*{Acknowledgments}
Substantive feedback from Marek \v Spitalsk\'y, Jana Majerov\'a, and Marian Grend\'ar
is gratefully acknowledged.
The author is indebted to Xiaojiang Ye for providing a counterexample to
Lemma~\ref{L:graph-Voptimal-1} in the previous version of the paper.
This research is an outgrowth of the project ``SPAMIA'', M\v S SR-3709/2010-11,
supported by the Ministry of Education, Science, Research and Sport of the Slovak Republic,
under the heading of the state budget support for research and development.
The author also acknowledges support from VEGA~1/0786/15 and APVV-15-0439 grants.

%%%%%%%%%%%%%%%%%%%%%%%%%%%%%%%%%%%%%%%%%%%%%%%%%%%%%%%%%%%%%%%%%%%%%%%%%%%%%%%
%%%%%%%%%%%%%%%%%%%%%%%%%%%%%%%%%%%%%%%%%%%%%%%%%%%%%%%%%%%%%%%%%%%%%%%%%%%%%%%
%%%                        REFERENCES                                       %%%
%%%%%%%%%%%%%%%%%%%%%%%%%%%%%%%%%%%%%%%%%%%%%%%%%%%%%%%%%%%%%%%%%%%%%%%%%%%%%%%
%%%%%%%%%%%%%%%%%%%%%%%%%%%%%%%%%%%%%%%%%%%%%%%%%%%%%%%%%%%%%%%%%%%%%%%%%%%%%%%
%\bibliography{spitalsky-local-correlation-entropy}
\providecommand{\href}[2]{#2}
\providecommand{\arxiv}[1]{\href{http://arxiv.org/abs/#1}{arXiv:#1}}
\providecommand{\url}[1]{\texttt{#1}}
\providecommand{\urlprefix}{URL }
\providecommand{\newblock}{}
\providecommand{\doititle}[1]{{#1}}

\end{document}